\documentclass[a4paper,11pt]{article}
\usepackage[utf8]{inputenc} 
\usepackage[T1]{fontenc} 
\usepackage[english]{babel} 
\usepackage{amsmath,amssymb,amsthm,mathrsfs,dsfont,mathtools,euscript,appendix} 
\usepackage{xcolor} 
\usepackage{authblk}  
\usepackage{ushort} 
\usepackage{url}
\usepackage{enumerate}
\usepackage[shortlabels]{enumitem} 
\usepackage{scalerel} 
\usepackage{relsize}

\textheight=21cm
\textwidth=13cm
\usepackage[top=2.5cm, bottom=2.5cm, left=2.5cm, right=2.5cm]{geometry}
\usepackage{graphicx}
\usepackage[numbers]{natbib}

\usepackage[colorlinks=true]{hyperref}
\hypersetup{urlcolor=blue, linkcolor=red, citecolor=blue}
\numberwithin{equation}{section}

\newcommand{\N}{\mathbb{N}}

\newcommand{\iF}{\mathcal{F}}
\newcommand{\R}{\mathbb{R}}
\newcommand{\eP}{\mathbb{P}}
\newcommand{\iP}{\mathcal{P}}
\newcommand{\iW}{\mathcal{W}}
\newcommand{\iC}{\mathcal{C}}
\newcommand{\eC}{\mathscr{C}}
\newcommand{\iQ}{\mathcal{Z}}
\newcommand{\eL}{\mathbb{L}}
\newcommand{\iG}{\mathcal{G}}

\newcommand{\iB}{\mathcal{B}}
\newcommand{\iL}{\mathcal{L}}
\newcommand{\iJ}{\mathcal{J}}
\newcommand{\iE}{\mathcal{E}}
\newcommand{\E}{\mathbb{E}}
\newcommand{\iM}{\mathcal{M}}
\newcommand{\iN}{\mathcal{N}}
\newcommand{\D}{\mathbb{D}}
\newcommand{\etr}{(\iM_F(\R_+)\times\R,w\otimes|.|)}
\newcommand{\vag}{(\iM_F(\R_+)\times\R,v\otimes|.|)}
\newcommand{\ec}{\underline{c}}

\def\approxleq{%
  \def\p{%
    \setbox0=\vbox{\hbox{$<$}}%
    \ht0=0.9ex \box0 }%
  \def\s{%
    \vbox{\hbox{$\sim$}}%
  }%
  \mathrel{\raisebox{0.7ex}{%
      \mbox{$\underset{\s}{\p}$}%
    }}%
}

\newtheorem{theorem}{Theorem}[section]
\newtheorem{prop}{Proposition}[section]
\newtheorem{lemma}{Lemma}[section]

\newtheorem{rmq}{Remark}[section]

\newtheorem{assume}{Assumption}[section]

\title{Well posedness and stochastic derivation of a diffusion-growth-fragmentation equation in a chemostat}

\author{Josu\'e Tchouanti
\thanks{Email: \texttt{josue.tchouanti-fotso@unice.fr}}}

\affil{Neuromod Institute, Universit\'e C\^ote d'Azur\\ 2004 Route des Lucioles, 06902 Valbonne, France}


\begin{document}

    \maketitle
    
    \begin{abstract}
        We study the existence and uniqueness of the solution of a non-linear coupled system constituted of a degenerate diffusion-growth-fragmentation equation and a differential equation, resulting from the modeling of bacterial growth in a chemostat. This system is derived, in a large population approximation, from a stochastic individual-based model where each individual is characterized by a non-negative trait whose dynamics is described by a diffusion process. Two uniqueness results are highlighted. They differ in their hypotheses related to the influence of the resource on individual trait dynamics, the main difficulty being the non-linearity due to this dependence and the degeneracy of the diffusion coefficient. Further we show by probabilistic arguments that the semi-group of the stochastic trait dynamics admits a density. We deduce that the diffusion-growth-fragmentation equation admits a function solution with a certain Besov regularity.
        \medskip
        
        \noindent \textbf{keywords} {Diffusion-growth-fragmentation equation coupled with resource $\cdot$ Stochastic Feller-type diffusion $\cdot$ Mild formulation $\cdot$ Large population approximation $\cdot$ Existence of density.}
        \medskip
        
        \noindent\textbf{Mathematics subject classification (2020)} {Primary 35K65 $\cdot$ 60K35 ; Secondary 35K61 }
    
    \end{abstract}
    \section{Introduction}
        Developed by Novick and Szilard \cite{chem1}, and Monod \cite{chem2}, the chemostat is a laboratory device used by biologists to raise the microorganisms and study their interactions while at the same time regulating the population size and the experimental medium. It consists in a culture in a container of constant volume in which a substrate is continuously injected and extracted at the same rate. In the literature, it is extensively used to study the population dynamics, their interactions and the adaptative behavior of microorganisms \cite{jerom,sylCol,chamMelchem}. In this vein, we are interested in the dynamics of a structured asexual population (typically bacteria) coupled with a resource in a chemostat. Each individual is characterized by a quantitative trait $x\in\R_+$ which models a protein density and evolves according to a diffusion of which coefficients depend on the resource. The goal of this work is to show the existence and uniqueness of the solution of the large population dynamics described by the coupled system
        \begin{equation}\label{PDE}
            \left\{\begin{array}{l}
                \displaystyle \partial_tu_t = \partial_x^2\big(D(x,R_t)u_t\big) - \partial_x\big( \zeta(x,R_t)u_t \big) + \iG^{\dag}\big[ b(\cdot,R_t)u_t \big] - d(x)u_t\, , \textrm{ on } (0,T]\times\R^*_+ \vspace{0.20cm}
                \\
                \displaystyle \dot{R}_t = r-R_t - \int_0^{\infty}\chi(x,R_t)u_t(x)dx , \forall t\in(0,T] \vspace{0.20cm}
                \\
                \displaystyle \zeta(0,R_t)u_t(0) - \partial_x\big( D(x,R_t)u_t \big)_{|x=0} = 0 , \forall t\in(0,T]
            \end{array}\right.
        \end{equation}
        with a given initial condition $(u_0,R_0)$. The first two terms on the right side of the parabolic equation in this system are linked to the trait dynamics. The third one models cell division which is accompanied by a fragmentation of the trait according to the operator
        \[ \iG^{\dag}[f](x) = -f(x) + \int_0^1\frac{2}{\alpha} f\left( \frac{x}{\alpha} \right)M(d\alpha) \]
        where $M(d\alpha)$ is a symmetric probability measure on $]0,1[$, and the last term represents death. The second equation describes the resource supply and its dilution at the same rate $1$. The last term corresponds to its consumption by the population. The third equation is a no-flux boundary condition which ensures the quantitative variable to remain non negative.
        
        This model extends the well known growth-fragmentation model that has been extensively studied in the literature (see Bertoin et $al.$ \cite{gfrag1,gfrag2,gfrag3,gfrag4}, or Doumic et $al.$ \cite{gfrag5,gfrag6,gfrag7} who investigated the asymptotic properties of the equation without resource, or Campillo and Fritsch \cite{camfritsch} for its well posedness and stochastic derivation in a chemostat). Here, we also include the resource dynamics, and introduce a diffusion term that can be seen as an intrinsic noise resulting from very fast synthesis and degradations of the quantitative trait (see \cite{vinChi} for a similar framework). The main difficulty to get a uniqueness result lies in the degeneracy and the dependence in resource of this term. In order to deal with it, we will develop an approach based on the regularity of the individual trait dynamics Markovian semi-group. As it will also depend on the solution, it will be necessary to show some regularity according to this dependence and the state variable. Such an approach has been developed in \cite{joaMel} for a model without resource with Lipschitz continuous diffusion coefficient in the stochastic trait dynamics. Although this coefficient is less regular in our case, we show that it is possible to adapt their method when it does not depend on the resource dynamics. The general situation is more complicated because we are unable to obtain the Lipschitz dependence of the semi-group according to the solution, that holds in the previous case. We will then require more regularity and bounds on the coefficients in order to deal with this dependence thanks to the regularity of its generator.
        
        Further, another major point consists in showing that the weak solution of the diffusion-growth-fragmentation equation is a function, even if the initial condition is a measure. Classically, this point is directly linked to the stochastic dynamics of the individual trait. Then we will first show that at any time, the distribution of the trait admits a density with respect to the Lebesgue measure on $\R_+$, and extend the existence of the density to the solution thanks to a mild formulation of the equation. Again, the main difficulty lies in the degeneracy of the diffusion coefficient, but also its weaker regularity (H\"older continuous). Sato \& Ueno \cite[Theorem 2.1 and Appendix]{sato} showed the existence and regularity of the density in the uniformly elliptic case, and Bouleau \& Hirsch \cite[Theorem 2.1.3, p162]{boulHir} showed the existence of the density in the case of Lipschitz continuous coefficients. Here, we adapt a method inspired by Debussche \& Romito \cite{debromito} that has been developed by Romito \cite{romito}, which allows us to show that the density exists outside the degeneracy point $0$ for time dependent and H\"older continuous diffusion coefficients. It remains to justify that the law of the trait at any time does not charge this very point in order to conclude that the density exists on the entire non-negative real half-line.
        
        The paper is then organized as follows: in Section 2, we show that (\ref{PDE}) can be derived from an individual based model that describes each intrinsic dynamics and the resource. For that purpose, we propose a Markovian stochastic process that models the trait distribution within the population coupled to the resource dynamics, given a parameter that scales the initial number of individuals. We show that when this scale parameter goes to infinity, the sequence of laws of the processes admits some limiting values which satisfies a weak formulation of (\ref{PDE}). In Section 3, we show that the solution of this weak formulation is unique in a certain set of continuous measure valued processes and under suitable assumptions. We distinguish an easier case where the diffusion coefficient does not depend on the resource, and the general case where it does. As described here above, the method we use in those cases are based on the regularity of the semi-group of the stochastic process that describes the trait dynamics, according to the state variable and the way it depends on the solution. Finally, in Section 4, we show that this solution admits a density at any time with a Besov regularity. To summarize and including the resource dynamics, we prove that the only weak solution of (\ref{PDE}) belongs to $\eL^{\infty}((0,T], \eL^1(\R_+)\times\R)$.
        
        \paragraph{Notations} For later use, we introduce the following notations :
        \begin{itemize}
            \item $\iM_F(\R_+)$ is the space of finite measures on $\R_+$. It can be endowed with its vague (respectively weak) topology and denoted $(\iM_F(\R_+),v)$ (respectively $(\iM_F(\R_+),w)$), that is the weakest topology making all maps $\iM_F(\R_+)\ni\mu\mapsto \int_{\R_+} f(x)\mu(dx)$ continuous for any $f\in\iC_c(\R_+,\R)$ (respectively $f\in\iC_b(\R_+,\R)$).
            
            \item $\iM$ is the subspace of $\iM_F(\R_+)$ constituted of the finite sums of Dirac masses, thus
            \[ \iM = \left\{ \sum_{i=1}^n\delta_{x_i}: n\in\N, x_1,...,x_n\in\R_+ \right\} \]
            
            \item $\iP(\iE)$ denotes the space of probability measures on a given measurable space $\iE$.
            
            \item $\mathcal{LB}(\R_+,\R)$ denotes the space of functions from $\R_+$ to $\R$ that are bounded and Lipschitz continuous. It is endowed with the norm $\|\phi\|_{\mathcal{LB}} = \|\phi\|_{\textrm{Lip}} + \|\phi\|_{\infty}$
            where
            \[ \|\phi\|_{\textrm{Lip}} = \inf\left\{ c>0 : \frac{|\phi(x) - \phi(y)|}{|x-y|} \leq c, \forall x,y\in\R_+, x\neq y  \right\}. \]
            
            \item $\iJ$ denotes an abstract countable set.
            
            \item The constant $C > 0$ (or $C_p$) can change one line to an other.
        \end{itemize}

    \section{The individual-based model and stochastic derivation}
        In this section, we propose a stochastic individual-based model describing the population dynamics in the chemostat at the individual level with a carrying capacity $K>1$. Our goal is to derive a weak solution of (\ref{PDE}) from this stochastic model when $K$ goes to infinity.
        
        \subsection{Description}
            Let us first introduce a scaling parameter $K>1$ that represents the carrying capacity of the environment. We assume that it is roughtly proportional to the initial number of individuals. The bacterial population is described by a stochastic measure valued process $(\nu^K_t)_{t\geq 0}$ with values in $\frac{1}{K}\iM$ given at any time by the sum of the Dirac masses at the trait of each living cell so that each individual has the same mass $1/K$, and the resource is described by a positive continuous process $(R^K_t)_{t\geq 0}$ that corresponds to its mass concentration. Let us denote by $V^K_t\subset\iJ$ the collection of the living cells at any time $t\geq 0$. Each individual is characterized by a positive trait that evolves during its life-time, and may give birth or die depending on its trait and the availability of the resource as described as follows :
        
            Between jumps, the trait dynamics of an individual $i\in V^K_{t-}$ is described by a non negative diffusion process $X^{i,K}$ that satisfies the following stochastic differential equation 
            \begin{equation}\label{edsi}
                dX^{i,K}_t = \zeta(X^{i,K}_t,R^K_t)dt + \sqrt{2D(X^{i,K}_t,R^K_t)}\,dW^i_t
            \end{equation}
            during its life-time. The set $\iW = \{W^i, i\in\iJ\}$ is assumed to be a family of independent Brownian motions.
            
            \noindent Furthermore, an individual with trait $x\geq 0$ divides at rate $b(x,R^K_{t-})$ and its trait is split at the same time so that the mother cell stays with a random proportion $\alpha\in\, ]0,1[$ chosen according to a symmetric probability distribution $M(d\alpha)$, and the daughter starts its life with the remaining proportion $1-\alpha$. Such an individual dies at rate $d(x)$ either by extraction at rate $1$ or by natural death. It is then natural to consider that
            \begin{equation}\label{assd}
                d(x)\geq 1 , \forall x\geq 0
            \end{equation}
            but we do not need this hypothesis in the sequel.
            
            \noindent Finally, the mechanisms of the resource dynamics are well known for the chemostat and consist in a continuous supply of concentration $r_{in}>0$ and dilution of the resource at rate $1$, and its consumption by the micro-organisms during their life-time. Let us consider that an individual with trait $x\geq 0$ consumes a small amount $\chi(x,R^K_t)/K$ of the resource at time $t\geq 0$, then
            \begin{equation}\label{ress}
                R_t^K = R_0 + \int_0^t\bigg\{ r_{in}-R^K_s - \frac{1}{K}\sum_{i\in V^K_s}\chi(X^{i,K}_s,R^K_s)\bigg\}ds \, ,\, \forall t\geq 0.
            \end{equation}
            
            In order to describe the coupled dynamics of the population and the resource, we introduce a probability space $(\Omega,\iF,\eP)$ so that the initial condition $(\nu^K_0,R_0)$ is almost surely $\frac{1}{K}\iM\times\R_+$-valued for each $K>1$, and independent of the family $\iW$. Let us also introduce the independent Poisson point measures $\iN_1(d\alpha,dz,di,ds)$, $\iN_2(dz,di,ds)$ on $[0,1]\times\R_+\times\iJ\times\R_+$ and $\R_+\times\iJ\times\R_+$ with intensities $M(d\alpha)\eta(dz,di,ds)$ and $\eta(dz,di,ds)$ respectively, where
            \begin{equation}
                \eta(dz,di,ds) = dz\bigg( \sum_{j\in\iJ}\delta_j(di) \bigg)ds.
            \end{equation}
            Those Poisson point measures are assumed to be independent of the families $\iW$ and $\{\nu^K_0,K>1 \}$. We finally introduce the canonical filtration $(\iF_t)_{t\geq 0}$ generated by the sequence of initial conditions $\{\nu^K_0,K>1 \}$, the family of Brownian motions $\iW$ and the Poisson point measures $\iN_1, \iN_2$. This filtration is assumed to be complete and right continuous.

            We are interested in the dynamics of a coupled stochastic process composed of the $\frac{1}{K}\iM$-valued process defined by
            \begin{equation}
                \nu^K_t = \frac{1}{K}\sum_{i\in V^K_t}\delta_{X^{i,K}_t},\forall t\geq 0
            \end{equation}
            and the continuous real valued process $(R^K_t)_{t\geq 0}$. The trajectories of this coupled stochastic process can be represented by the following system 
            \begin{equation}\label{trajK}\left\{\begin{array}{l}
                \displaystyle \textrm{for all } f\in\iC^{2}_b(\R_+,\R), \vspace{0.20cm}
                \\
                \displaystyle \left\langle\nu^K_t,f\right\rangle = \left\langle\nu^K_0,f\right\rangle + \frac{1}{K}\int_0^t\sum_{i\in V^K_s}\sqrt{2 D(X^{i,K}_s,R^K_s)}\,f'(X^{i,K}_s) dW^i_s  \vspace{0.20cm}
                \\
                \displaystyle \qquad\quad +~ \int_0^t\int_0^{\infty} \left\{ \zeta(x,R^K_s) f'(x) + D(x,R^K_s)f''(x) \right\}\nu^K_s(dx)ds  \vspace{0.20cm}
                \\
                \displaystyle \qquad\quad +~\frac{1}{K}\iiiint_{[0,t]\times\iJ\times\R_+\times [0,1]} 1_{\left\{ i\in V_{s-}^K \right\}\cap\left\{ z\leq b(X^{i,K}_{s-},R^K_{s-}) \right\}} \vspace{0.25cm}
                \\
                \displaystyle \qquad\qquad\qquad\qquad \left[ -f(X^{i,K}_{s-}) + f(\alpha X^{i,K}_{s-}) + f((1-\alpha)X^{i,K}_{s-}) \right]\iN_{1}(d\alpha,dz,di,ds) \vspace{0.20cm}
                \\
                \displaystyle \qquad\quad -~\frac{1}{K}\iiint_{[0,t]\times\iJ\times\R_+} 1_{\left\{ i\in V^K_{s-} \right\}\cap\left\{ z\leq d(X^{i,K}_{s-}) \right\}}f(X^{i,K}_{s-})\iN_{2}(dz,di,ds), \vspace{0.25cm}
                \\
                \displaystyle R^K_t = R_0 + \int_0^t \bigg\{ r_{in}-R^K_s - \big\langle\nu^K_s,\chi(\cdot,R^K_s)\big\rangle \bigg\}ds.
            \end{array}\right.
            \end{equation}
            The first and the second integrals in the first equation correspond to the trait dynamics and follows from the It\^o formula applied on (\ref{edsi}), and the jumps correspond to births and deaths respectively. We refer to \cite{fourMel,chamMel} for the construction of such a process. In the sequel, we denote $\bar{R} = r_{in}\vee R_0$ and make the following assumptions.
            \begin{assume}\label{assf}
                    \item[\textsc{(A.1)}] The coefficients $\zeta(x,r),D(x,r),b(x,r),d(x),\chi(x,r)$ are Lipschitz continuous for $x\geq 0$ and $r\in[0,\bar{R}]$.
                        
                    \item[\textsc{(A.2)}] The coefficients $b(x,r), d(x), \chi(x,r)$ are non negative and bounded for $x\geq 0$ and $r\in [0,\bar{R}]$.
                    
                    \item[\textsc{(A.3)}] The coefficient $D(x,r)$ is non negative and
                    \[ \zeta(0,r)>0 \textrm{ and }D(0,r) = 0 , \forall r\in [0,\bar{R}]. \]
                    
                    \item[\textsc{(A.3)}] For any $x\geq 0$, $\chi(x,0) = 0$.
            \end{assume}
            \begin{rmq}
                The conditions $\zeta(0,r) > 0$ and $D(0,r) = 0$ ensure that the trait remains positive, since its diffusion coefficient vanishes and its drift is non-negative when it reaches $0$. Similarly, the condition $\chi(x,0) = 0$ is natural and ensures that the process $(R^K_t)_{t\geq 0}$ remains positive, since its drift is positive when it reaches $0$. Moreover, it follows from the comparison principle that
                \begin{equation}
                    R^K_t \leq r_{in} + (R_0-r_{in})e^{-t} \leq \bar{R} \textrm{ for all }t\geq 0.
                \end{equation}
            \end{rmq}
            \noindent For the sake of simplicity, we introduce the operators
            \begin{equation}
                L_rf(x) = \zeta(x,r)f'(x) + D(x,r)f''(x) , \forall x\geq 0
            \end{equation}
            given $r\in[0,\bar{R}]$, and 
            \begin{equation}\label{fragop}
                \iG[f](x) = -f(x) + 2\int_0^1f(\alpha x)M(d\alpha) , \forall x\geq 0,
            \end{equation}
            then we have the following result.
            \begin{lemma}\label{tailK}
                Under Assumption \ref{assf},
                \begin{enumerate}
                    \item if there exists $q\geq 1$ such that $\sup_K\E\left( \left\langle\nu^K_0,1\right\rangle^q \right) < \infty$, then
                    \begin{equation}\label{momK}
                        \sup_K\E\left\{ \sup_{0\leq t\leq T}\left[ \left\langle\nu^K_t,1\right\rangle^q \right] \right\} < \infty\textrm{ for all }T>0.
                    \end{equation}
                    
                    \item if there exists $q\geq 1$ such that $\sup_K\E\left( \left\langle\nu^K_0,1+x^q\right\rangle \right) < \infty$, then
                    \begin{equation}\label{momxK}
                        \sup_K\E\left\{ \sup_{0\leq t\leq T}\left[ \left\langle\nu^K_t,1+x^q\right\rangle \right] \right\} < \infty\textrm{ for all }T>0.
                    \end{equation}
                    
                    \item if $\E\left( \left\langle\nu^K_0,1\right\rangle \right)<\infty$, then the process $(\nu^K_t)_{t\geq 0}$ admits the Doob-Meyer decomposition
                    \begin{equation}\label{doobM}
                        \left\langle\nu^K_t,f\right\rangle = \left\langle\nu^K_0,f\right\rangle + V^{K,f}_t + M^{K,f}_t, \forall t\geq 0
                    \end{equation}
                    for any $f\in\iC^2_b(\R_+,\R)$ such that $xf'$ and $xf''$ are bounded, where $V^{K,f}$ is the process with finite variation defined by
                    \begin{equation}
                        V^{K,f}_t = \int_0^t\big\langle\nu^K_s, L_{R^K_s}f + b(\cdot,R^K_s)\iG [f] - d f \big\rangle ds
                    \end{equation}
                    and $M^{K,f}$ a square integrable martingale of quadratic variation
                    \begin{equation}\label{qvar}\begin{array}{l}
                        \displaystyle \left\langle M^{K,f}\right\rangle_t = \frac{2}{K}\int_0^t\int_0^{\infty}D(x,R_s^K)|f'(x)|^2\nu^K_s(dx)ds + \frac{1}{K}\int_0^t\int_0^{\infty}d(x)f^2(x)\nu^K_s(dx) ds \vspace{0.15cm}
                        \\
                        \displaystyle \qquad\qquad~ +~ \frac{1}{K}\int_0^t\int_0^{\infty}b(x,R^K_s)\int_0^1\big[ -f(x) + f(\alpha x) + f((1-\alpha)x)\big]^2M(d\alpha)\nu^K_s(dx)ds.
                    \end{array}
                    \end{equation} 
                \end{enumerate} 
            \end{lemma}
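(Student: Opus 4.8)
The plan is to read everything off the trajectory representation (\ref{trajK}), treating the three claims in the logical order (1), (2), (3): the two moment bounds are obtained by testing (\ref{trajK}) against suitable functions and closing a Gronwall estimate, while the decomposition (3) is obtained by compensating the two Poisson integrals and then invoking claim (1) with $q=1$ for square-integrability. Throughout, the point to watch is that every constant must stay independent of $K$, which forces me to localise by the stopping times $\tau_n = \inf\{t\ge0 : \langle\nu^K_t,1\rangle \ge n\}$, prove the bounds on $[0,t\wedge\tau_n]$ where all quantities are finite, and let $n\to\infty$ by monotone convergence.

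For claim (1) I would test against $f\equiv1$, so that the diffusion terms disappear and $\langle\nu^K_t,1\rangle$ is a pure birth-death process whose mass jumps by $\pm1/K$. Applying Dynkin's formula to $\langle\nu^K_t,1\rangle^q$, the birth rate $K\langle\nu^K_s,b(\cdot,R^K_s)\rangle\le K\|b\|_{\infty}\langle\nu^K_s,1\rangle$ times the convexity increment $(z+\tfrac1K)^q - z^q \le \tfrac{q}{K}2^{q-1}z^{q-1}$ (valid for $z\ge1/K$, $q\ge1$) yields a generator bound of the form $C_q\langle\nu^K_s,1\rangle^q$, with $C_q$ depending only on $q$ and $\|b\|_{\infty}$; Gronwall then gives $\sup_{t\le T}\E[\langle\nu^K_t,1\rangle^q]<\infty$ uniformly in $K$. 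To put the supremum inside, I write $\langle\nu^K_s,1\rangle \le \langle\nu^K_0,1\rangle + C\int_0^s\langle\nu^K_u,1\rangle\,du + \sup_{r\le s}|M^{K,1}_r|$, apply a pathwise Gronwall to get $\sup_{s\le t}\langle\nu^K_s,1\rangle \le (\langle\nu^K_0,1\rangle + \sup_{r\le t}|M^{K,1}_r|)e^{Ct}$, and control the martingale by Burkholder--Davis--Gundy using $\langle M^{K,1}\rangle_t = \tfrac1K\int_0^t\langle\nu^K_s,b(\cdot,R^K_s)+d\rangle\,ds$. The resulting self-bounding inequality $G\le a + bG^{1/2}$ for $G=\E[\sup_{s\le t\wedge\tau_n}\langle\nu^K_s,1\rangle^q]$ closes the estimate with $a,b$ independent of $K$ and $n$.

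Claim (2) follows the same scheme with $f(x)=1+x^q$, after truncating $f$ to stay in $\iC^2_b$ and passing to the limit by Fatou. The Lipschitz bounds in (A.1) together with $D(0,r)=0$ give $\zeta(x,r)\le C(1+x)$ and $D(x,r)\le Cx$, whence $L_rf\le C(1+x^q)$; since $b\,\iG[f]\le C(1+x^q)$ and $-df\le0$, the drift is dominated by $C\langle\nu^K_s,f\rangle$ and taking expectations gives $\sup_{t\le T}\E[\langle\nu^K_t,f\rangle]<\infty$. The delicate point, and the one place the structure of the model is really used, is the supremum bound: the quadratic variation of $M^{K,f}$ now contains $\tfrac1K\langle\nu^K_s,x^{2q}\rangle$, apparently a moment of order $2q$. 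This is resolved by the elementary identity $\tfrac1K\langle\nu,x^{2q}\rangle = \tfrac1{K^2}\sum_i(X^i)^{2q} \le \big(\tfrac1K\sum_i(X^i)^q\big)^2 = \langle\nu,x^q\rangle^2$, which absorbs the extra factor $1/K$ and bounds $\langle M^{K,f}\rangle_t$ by $C\int_0^t\langle\nu^K_s,f\rangle^2\,ds$ up to lower-order terms. Feeding $\int_0^t\langle\nu^K_s,f\rangle^2\,ds \le \big(\sup_{s\le t}\langle\nu^K_s,f\rangle\big)\int_0^t\langle\nu^K_s,f\rangle\,ds$ into BDG, together with Cauchy--Schwarz and the already-proved first-moment bound, again produces a self-bounding inequality that Gronwall closes uniformly in $K$.

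For the decomposition (3) I would split each Poisson integral in (\ref{trajK}) into its compensated martingale measure and its intensity $M(d\alpha)\,dz\,(\sum_j\delta_j(di))\,ds$. Integrating the intensities over $z$ and $i\in V^K_{s-}$, and using the symmetry of $M$ to rewrite $\int_0^1[-f(x)+f(\alpha x)+f((1-\alpha)x)]M(d\alpha)=\iG[f](x)$, produces exactly the finite-variation part $V^{K,f}$; the martingale $M^{K,f}$ is then the sum of the diffusion stochastic integral and the two compensated Poisson integrals. Because these are driven by the mutually independent families $\iW$, $\iN_1$ and $\iN_2$, they are orthogonal and $\langle M^{K,f}\rangle$ is the sum of the three displayed terms in (\ref{qvar}). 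Square-integrability is where claim (1) with $q=1$ enters: the hypotheses $f\in\iC^2_b$ with $xf'$ and $xf''$ bounded make each of $D|f'|^2$, $df^2$ and the squared fragmentation bracket bounded, so $\E\langle M^{K,f}\rangle_t \le \tfrac CK\int_0^t\E[\langle\nu^K_s,1\rangle]\,ds<\infty$. The main obstacle is thus the higher-moment issue in claim (2); once the $1/K$-absorption is spotted, every remaining step (truncation, localisation, the self-bounding Gronwall argument) is routine.
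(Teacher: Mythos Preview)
The paper does not actually prove this lemma: it states that ``the proof is relatively classical and similar to the one obtained in \cite{bjour,chamMel,joaMel,chiTran}'' and leaves it to the reader. Your sketch is correct and is precisely the classical argument the paper has in mind --- localise by $\tau_n$, test (\ref{trajK}) against $f\equiv1$ or a truncation of $1+x^q$, close by Gronwall, and upgrade to the pathwise supremum via BDG and a self-bounding inequality; for (3) compensate the Poisson integrals and use orthogonality of $\iW,\iN_1,\iN_2$ together with (1) for $q=1$.

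The one genuinely non-routine point you handle well is the apparent $2q$-moment in the quadratic variation for claim (2): the atomic structure of $\nu^K$ gives $\tfrac1K\langle\nu^K,x^{2q}\rangle=\tfrac1{K^2}\sum_i(X^{i})^{2q}\le\big(\tfrac1K\sum_i(X^{i})^q\big)^2=\langle\nu^K,x^q\rangle^2$, and the same trick (after the harmless bound $x^{2q-1}\le x^q+x^{2q}$) controls the diffusion contribution $D|f'|^2\le Cx^{2q-1}$. This is exactly the device used in the cited references (e.g.\ \cite{joaMel}), so nothing here departs from the intended proof.
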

            \noindent The proof is relatively classical and similar to the one obtained in \cite{bjour,chamMel,joaMel,chiTran}. It is left to the reader.
            \begin{rmq}
                It follows from (\ref{qvar}) that the quadratic variation of the martingale part of the above Doob-Meyer decomposition satisfies the property
                \begin{equation}\label{diff}
                    \left\langle M^{K,f}\right\rangle_t \leq C_T\,\frac{\|f\|_{\infty}^2+\|xf'\|_{\infty}^2}{K}\sup_{0\leq u\leq T}\left\langle\nu^K_u,1\right\rangle, \forall t\in [0,T].
                \end{equation}
            \end{rmq}
            
        \subsection{Large population approximation and existence theorem}
            In this section, we are interested in a large population approximation of the above stochastic model. The main result consists in showing that under suitable assumptions, the sequence of laws of the coupled stochastic dynamics of the population and resource is tight in the set of probability distributions $\iP\left(\D([0,T],\etr\right)$, and the processes in its limiting values are the solutions of (\ref{PDE}) in the weak sense. Let us make the following assumptions.
            \begin{assume}\label{assK}
                \item[\textsc{(A.1)}] There exists $\varrho>0$ such that the initial conditions $\nu^K_0\in\frac{1}{K}\iM$ satisfy
                \[ \sup_K\E\left[ \left\langle\nu^K_0,1\right\rangle^{1+\varrho} \right] < +\infty. \]
                
                \item[\textsc{(A.2)}] The sequence $\left\{ \nu^K_0,K>1 \right\}$ converges in law in $(\iM_F(\R_+),w)$ towards a mesure valued initial condition $\nu_0\in\iM_F(\R_+)$.
            \end{assume} 
            \noindent The main result of this section follows.
            \begin{theorem}\label{exist}
                Under Assumptions \ref{assf} and \ref{assK}, the sequence of laws $\left\{\iQ^K = \iL(\nu^K,R^K),K>1 \right\}$ is tight in the space of probability measures $\iP(\D([0,T],\etr))$ for all $T>0$. In addition, each process $(\nu,R)$ in the support of its limiting values is continuous, satisfies the bound $\sup_{0\leq t\leq T}\left\langle\nu_t,1\right\rangle < \infty$ and the system
                \begin{equation}\label{determ}
                    \left\{\begin{array}{l}
                        \displaystyle \forall t\in [0,T], \forall f\in\iC^2_c(\R_+,\R), \vspace{0.20cm}
                        \\
                        \displaystyle \left\langle\nu_t,f\right\rangle = \left\langle\nu_0,f\right\rangle + \int_0^t \big\langle\nu_s, L_{R_s}f + b(\cdot,R_s)\iG[f] - df\big\rangle ds  \vspace{0.20cm}
                        \\
                        \displaystyle R_t = R_0 + \int_0^t\bigg\{ r_{in}-R_s - \big\langle\nu_s,\chi(\cdot,R_s)\big\rangle \bigg\}ds
                    \end{array}\right.
                \end{equation}
                that is a weak formulation of (\ref{PDE}) {with the initial condition $(\nu_0,R_0)$}.
            \end{theorem}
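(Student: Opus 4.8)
The plan is to follow the classical scheme for passing from a measure‑valued branching–diffusion model to its large‑population limit (as in \cite{fourMel,chamMel}): first establish tightness through the moment estimates of Lemma \ref{tailK} and the Aldous--Rebolledo criterion, then identify the limit points via the martingale problem attached to the Doob--Meyer decomposition \eqref{doobM}.

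\textbf{Tightness.} Assumption \ref{assK}(A.1) together with Lemma \ref{tailK}(1) applied with $q=1+\varrho$ yields $\sup_K\E\big\{\sup_{t\le T}\langle\nu^K_t,1\rangle^{1+\varrho}\big\}<\infty$, which controls the total mass and supplies the uniform integrability needed later. To prove tightness of $(\nu^K)_K$ in $\iP(\D([0,T],(\iM_F(\R_+),w)))$ I would invoke a Roelly‑type criterion, reducing the problem to the tightness in $\D([0,T],\R)$ of the real processes $(\langle\nu^K,f\rangle)_K$ for $f$ in a countable family $\iD$ of functions of $\iC^2_b(\R_+,\R)$ with $xf'$ and $xf''$ bounded, chosen rich enough to be convergence‑determining and to contain the constant $1$ (so as to capture the weak, not merely vague, topology). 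For each such $f$, \eqref{doobM} applies, and the Aldous--Rebolledo criterion requires: pointwise tightness of $\langle\nu^K_t,f\rangle$, immediate from $|\langle\nu^K_t,f\rangle|\le\|f\|_\infty\langle\nu^K_t,1\rangle$ and the mass bound; and control of the increments of the finite‑variation part and of the bracket over small intervals. For stopping times $S\le S'\le(S+\delta)\wedge T$, Assumption \ref{assf}(A.1)--(A.2) gives $\E|V^{K,f}_{S'}-V^{K,f}_S|\le C_f\,\delta\,\E[\sup_{u\le T}\langle\nu^K_u,1\rangle]$, while \eqref{diff} gives $\E[\langle M^{K,f}\rangle_{S'}-\langle M^{K,f}\rangle_S]\le C_{f,T}K^{-1}\E[\sup_{u\le T}\langle\nu^K_u,1\rangle]$; both are uniform in $K$ and vanish as $\delta\to0$ (resp. $K\to\infty$). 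The resource processes are tight as well: they live in $[0,\bar R]$ and have velocities bounded by $r_{in}+\bar R+\|\chi\|_\infty\langle\nu^K_s,1\rangle$, hence satisfy a uniform modulus of continuity with continuous limit points. Marginal tightness of both coordinates gives tightness of $\iQ^K$ in $\iP(\D([0,T],\etr))$.

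\textbf{Identification of the limit.} Let $\iQ^{K_n}\Rightarrow\iQ$ with canonical process $(\nu,R)$. Each jump of $\langle\nu^K,f\rangle$ is bounded by $3\|f\|_\infty/K$, so the maximal jump tends to $0$ and $\iQ$ is carried by continuous paths; weak‑topology continuity follows since it holds for $\langle\nu_\cdot,f\rangle$, $f\in\iD$. For $f\in\iC^2_c(\R_+,\R)$ set
\[
\Psi^f_t(\mu,\rho)=\langle\mu_t,f\rangle-\langle\mu_0,f\rangle-\int_0^t\big\langle\mu_s,L_{\rho_s}f+b(\cdot,\rho_s)\iG[f]-df\big\rangle ds,
\]
and $\Phi_t(\mu,\rho)=\rho_t-\rho_0-\int_0^t\big(r_{in}-\rho_s-\langle\mu_s,\chi(\cdot,\rho_s)\rangle\big)ds$. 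Since $\Psi^f_t$ is built to match $V^{K,f}$ from \eqref{doobM} (the operator $\iG$ being the correct limit of the birth term thanks to the symmetry of $M$), one has $\Psi^f_t(\nu^K,R^K)=M^{K,f}_t$ and $\Phi_t(\nu^K,R^K)=0$. By \eqref{diff}, $\E[(M^{K,f}_t)^2]=\E[\langle M^{K,f}\rangle_t]\to0$, so $\Psi^f_t(\nu^K,R^K)\to0$ in $L^2$. Once $\Psi^f_t,\Phi_t$ are shown continuous on the continuous paths of $\etr$ and the convergence of expectations is justified by uniform integrability, one gets $\E_{\iQ}[|\Psi^f_t(\nu,R)|\wedge1]=0=\E_{\iQ}[|\Phi_t(\nu,R)|\wedge1]$ for all $t$ and $f$; running over countably many $t$ and $f$ and using path‑continuity shows $(\nu,R)$ satisfies \eqref{determ} $\iQ$‑almost surely. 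The bound $\sup_{t\le T}\langle\nu_t,1\rangle<\infty$ follows from the mass estimate by lower semicontinuity.

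\textbf{Main obstacle.} The difficulties concentrate in the continuity/identification step and come from the weak topology and the nonlinear resource coupling. First, the consumption term $\langle\nu_s,\chi(\cdot,R_s)\rangle$ and the mass $\langle\nu_s,1\rangle$ involve bounded but non‑compactly‑supported integrands, so the identification genuinely needs convergence in the weak (not vague) topology; showing that no mass escapes to infinity, so that the limit remains in $(\iM_F(\R_+),w)$ and these functionals stay continuous, is the core point (and the reason for forcing the constant $1$ into the tightness family). Second, since $R_s$ enters $\zeta,D,b,\chi$ nonlinearly, the continuity of $(\mu,\rho)\mapsto\int_0^t\langle\mu_s,L_{\rho_s}f+b(\cdot,\rho_s)\iG[f]\rangle ds$ must be established jointly in both arguments; here the Lipschitz dependence on $r$ from Assumption \ref{assf}(A.1) and the compact support of $f$ (making $L_{\rho_s}f$, $\iG[f]$, $df$ bounded and continuous in $x$) are exactly what is used, and the $(1+\varrho)$‑moment bound of Lemma \ref{tailK}(1) furnishes the uniform integrability that turns convergence in law into convergence of the expectations $\E[\Psi^f_t]$.
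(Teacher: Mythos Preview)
Your overall structure---moment control, Aldous--Rebolledo tightness, then identification via $\Psi^f_t$ and $\Phi_t$---matches the paper's scheme. However, there is a genuine gap in how you obtain tightness for the \emph{weak} topology on $\iM_F(\R_+)$.

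You propose to reach the weak topology directly by enlarging the Roelly test-function family $\iD\subset\iC^2_b$ so that it contains the constant $1$. This is not sufficient. Roelly's criterion gives tightness in $\D([0,T],(\iM_F(\R_+),v))$ from tightness of $\langle\nu^K,f\rangle$ for $f\in\iC_c$; but for the weak topology, compact sets of $(\iM_F(\R_+),w)$ are characterised by \emph{bounded mass together with uniform tightness of the measures} (no mass near infinity). Tightness of $\langle\nu^K,1\rangle$ in $\D([0,T],\R)$ only controls total mass along subsequences; it does not rule out mass escaping to infinity (think of $\nu^K_t=\delta_K$: total mass is tight, but any vague limit loses all mass). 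Consequently, even if $\nu^K\to\nu$ vaguely and $\langle\nu^K,1\rangle\to m$, you cannot conclude $m=\langle\nu,1\rangle$ without an independent tail estimate, and without that equality you neither have tightness in $\etr$ nor continuity of the functional $\Phi_t$ (which integrates the non-compactly-supported $\chi$).

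The paper closes exactly this gap with a separate tail-control lemma (Lemma~\ref{fn}): it constructs cut-off functions $f_n\in\iC^2_b$ with $f_n\equiv0$ on $[0,n/2]$ and $f_n\equiv1$ on $[n,\infty)$, and shows
\[
\lim_{n\to\infty}\limsup_{K\to\infty}\E\Big[\sup_{0\le t\le T}\langle\nu^K_t,f_n\rangle\Big]=0,
\]
using that $f_n'$, $f_n''$ are supported in $[n/2,n]$ with size $O(1/n)$, so the drift contribution $A^{K,f_n}_T$ vanishes. Tightness is then proved first in the \emph{vague} topology (where Roelly applies with $\iC^2_c$), and the tail estimate is used \`a la M\'el\'eard--Roelly to show $\langle\nu^K,1\rangle\to\langle\nu,1\rangle$ in law and hence upgrade to weak convergence. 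You correctly flag ``no mass escapes to infinity'' as the main obstacle, but your proposed remedy does not address it; you need a quantitative tail bound of this kind.
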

            \begin{rmq}
                Equation (\ref{PDE}) has therefore at least one weak solution. 
            \end{rmq}
            \noindent The proof of Theorem \ref{exist} is relatively classical and uses the techniques developed in \cite{ald,roel,chamMel,bjour,fourMel,joaMel} or \cite{sylVin}, Theorem 7.4. It consists in three main steps: a uniform control of trait distribution tails in the population, the tightness of the sequence of laws $\{\iQ^K,K>1\}$ and identification of the limit. It is detailed in Appendix \ref{appA}.
            
    \section{Uniqueness of the solution}
        In this section, we show that there exists a unique solution of the system (\ref{determ}). The main difficulties are the degeneracy of the diffusion coefficient and the non linearity of the drift and diffusion coefficients. Then we will first of all proceed by a decoupling of this system in order to obtain only one non-linear equation on the measure-valued process and establish some important properties of that decoupling. Secondly, we will write a mild formulation of this equation with continuous test functions thanks to the semigroup of the individual trait. Finally, we will use the properties of this semigroup to show in two different cases that the system (\ref{determ}) admits a unique solution.
            
        \subsection{Decoupling and mild formulation of the limit system}
            Let us start with the following results that ensure us that the limit system (\ref{determ}) can be decoupled so that it is equivalent to only one equation that describes the process $(\nu_t)_{t\in[0,T]}$ for the given initial condition $(\nu_0,R_0)$.
            \begin{prop}
                If $(\nu,R)\in\iC([0,T],\etr)$ is a solution of (\ref{determ}), then the process $(R_t)_{t\in[0,T]}$ is completely defined by the measure valued process $(\nu_t)_{t\in[0,T]}$ in the sense that if $(\nu_t)_{t\in[0,T]}$ is given, then $(R_t)_{t\in[0,T]}$ exists and is unique. More precisely, at any time $t\geq 0$ the quantity $R_t$ is characterized by the history of the measure valued process $\nu$ until the time $t$.
            \end{prop}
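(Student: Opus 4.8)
The plan is to treat the second equation of (\ref{determ}) as a scalar non-autonomous Cauchy problem for $R$ in which the path $(\nu_t)_{t\in[0,T]}$ enters only as a time-dependent coefficient, and to solve it by a standard Cauchy--Lipschitz argument. Writing
$$ F(t,r) = r_{in} - r - \left\langle\nu_t,\chi(\cdot,r)\right\rangle, $$
the equation reads $R_t = R_0 + \int_0^t F(s,R_s)\,ds$, i.e. $\dot R_t = F(t,R_t)$ with the given $R_0$. The two ingredients I need are that $r\mapsto F(t,r)$ is Lipschitz uniformly in $t$ and that $t\mapsto F(t,r)$ is continuous; both follow from Assumption \ref{assf} together with the mass bound $\sup_{0\leq t\leq T}\left\langle\nu_t,1\right\rangle<\infty$ granted by Theorem \ref{exist}.

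First I would turn the $r$-regularity of $\chi$ into a Lipschitz estimate for the nonlocal term. By \textsc{(A.1)} the map $r\mapsto\chi(x,r)$ is Lipschitz on $[0,\bar{R}]$ with a constant $L_\chi$ independent of $x$, so integrating $|\chi(x,r_1)-\chi(x,r_2)|\leq L_\chi|r_1-r_2|$ against $\nu_t$ gives
$$ \left| \left\langle\nu_t,\chi(\cdot,r_1)\right\rangle - \left\langle\nu_t,\chi(\cdot,r_2)\right\rangle \right| \leq L_\chi\,|r_1-r_2|\,\left\langle\nu_t,1\right\rangle \leq L_\chi\, C_\nu\, |r_1-r_2|, $$
with $C_\nu := \sup_{0\leq t\leq T}\left\langle\nu_t,1\right\rangle$. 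Adding the contribution of the $-r$ term yields $|F(t,r_1)-F(t,r_2)|\leq(1+L_\chi C_\nu)|r_1-r_2|$ for all $t\in[0,T]$. Continuity in $t$ comes from the weak continuity of $s\mapsto\nu_s$ and the fact that $\chi(\cdot,r)$ is bounded and continuous (by \textsc{(A.1)}--\textsc{(A.2)}), so that $t\mapsto\left\langle\nu_t,\chi(\cdot,r)\right\rangle$ is continuous. Before invoking the Lipschitz bound I would check that any solution stays in $[0,\bar{R}]$, where the assumptions are posed: the sign conditions $\chi\geq 0$ and $\chi(x,0)=0$ from \textsc{(A.2)}--\textsc{(A.3)} force $F(t,0)=r_{in}>0$ while $F(t,r)\leq r_{in}-r$, so a comparison argument keeps $R_t$ in $[0,\bar{R}]$.

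With these two properties, existence and uniqueness follow from the Picard--Lindel\"of theorem for Carath\'eodory equations; alternatively, uniqueness alone---which is the real content---follows directly: if $R$ and $\tilde R$ both solve the equation for the same $\nu$, then
$$ |R_t-\tilde R_t| \leq \int_0^t |F(s,R_s)-F(s,\tilde R_s)|\,ds \leq (1+L_\chi C_\nu)\int_0^t |R_s-\tilde R_s|\,ds, $$
and Gr\"onwall's lemma gives $R_t=\tilde R_t$ on $[0,T]$. Existence of at least one $R$ is in any case guaranteed because $(\nu,R)$ is assumed to solve (\ref{determ}); the global Lipschitz bound additionally rules out blow-up, so the Picard iteration converges on the whole interval.

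Finally, for the causality statement---that $R_t$ is determined by the history $(\nu_s)_{s\leq t}$---I would note that the restriction of $R$ to $[0,t]$ solves the same Cauchy problem posed on $[0,t]$, which involves $\nu$ only through $(\nu_s)_{s\in[0,t]}$; by the uniqueness just proved, $R_{|[0,t]}$, and in particular $R_t$, is a measurable functional of $\nu_{|[0,t]}$ alone. Equivalently, each Picard iterate $R^{(n+1)}_t=R_0+\int_0^t F(s,R^{(n)}_s)\,ds$ depends on $\nu$ only over $[0,t]$, a property inherited by the limit. The only genuinely non-routine point is the passage from the $r$-regularity of the coefficient $\chi$ to the Lipschitz continuity of the nonlocal drift $r\mapsto\left\langle\nu_t,\chi(\cdot,r)\right\rangle$, which is exactly where the uniform mass bound on $\nu$ is used.
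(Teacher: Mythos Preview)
Your proof is correct and follows essentially the same route as the paper: write the resource equation as $\dot R_t=F^\nu(t,R_t)$ with $F^\nu(t,r)=r_{in}-r-\langle\nu_t,\chi(\cdot,r)\rangle$, use the Lipschitz regularity of $\chi$ in $r$ together with the uniform mass bound $\sup_t\langle\nu_t,1\rangle<\infty$ to get a global Lipschitz constant $1+L_\chi C_\nu$, check the invariance of $[0,\bar R]$ from $F^\nu(t,0)=r_{in}>0$ and $F^\nu(t,r)\le r_{in}-r$, and conclude by Cauchy--Lipschitz/Gr\"onwall. The paper additionally records an explicit lower bound $R_t\ge R_0e^{-(1+\rho_T^\nu)T}$ via comparison, which it needs later, while you add an explicit justification of the causality claim (restriction to $[0,t]$ and Picard iterates depend only on $\nu_{|[0,t]}$) that the paper leaves implicit; neither difference affects the argument.
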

            \begin{proof}
                Given the process $(\nu_t)_{t\in[0,T]}$, the resource dynamics is described by the equation
                \begin{equation}\label{eqR} 
                    R_t = R_0 + \int_0^tF^{\nu}(s,R_s)ds\, ,\, \forall t\in [0,T]
                \end{equation}
                with
                \begin{equation}\label{fnu}
                    F^{\nu}(t,r) = r_{in}-r - \big\langle\nu_t,\chi(\cdot,r)\big\rangle , (t,r)\in [0,T]\times \R.
                \end{equation}
                Since $F^{\nu}(t,0) = r_{in}>0$, each solution of (\ref{eqR}) is non-negative. In addition, 
                \[ \chi(x,r) = |\chi(x,r) - \chi(x,0)| \leq r \|\chi\|_{\textrm{Lip}}. \]
                Then if we set
                \begin{equation}
                    \rho^{\nu}_T = \| \chi\|_{\textrm{Lip}}\sup_{0\leq t\leq T}\langle\nu_t,1\rangle,
                \end{equation}
                we obtain the inequality 
                \[ (1+\rho_T^{\nu})\left[ \frac{r_{in}}{1+\rho_T^{\nu}} - r \right] = r_{in}-r - \rho^{\nu}_Tr \leq F^{\nu}(t,r) \leq r_{in}-r. \]
                By using the comparison theorem for ordinary differential equations, that implies that each solution of (\ref{eqR}) satisfies the bound
                \[ \frac{r_{in}}{1+\rho_T^{\nu}} + \left(R_0 - \frac{r_{in}}{1+\rho_T^{\nu}} \right)e^{-(1+\rho_T^{\nu})t} \leq R_t \leq r_{in} + (R_0 - r_{in})e^{-t}. \]
                We deduce the bound property
                \begin{equation}\label{boundR}
                    R_0e^{-(1+\rho_T^{\nu})T} \leq R_t \leq r_{in}\vee R_0 \leq \bar{R}, \forall t\in [0,T].
                \end{equation}
                Moreover, for all $t\in [0,T]$ and $r_1,r_2\in [0,\bar{R}]$,
                \begin{displaymath}\begin{array}{l}
                    \displaystyle |F^{\nu}(t,r_1) - F^{\nu}(t,r_2)| = |r_2-r_1 + \int_0^{\infty}\left[ \chi(x,r_2) - \chi(x,r_1)  \right]\nu_t(dx)| \vspace{0.20cm}
                    \\
                    \displaystyle \qquad\qquad\qquad\qquad\quad~\, \leq |r_1-r_2| + \int_0^{\infty}|\chi(x,r_1)-\chi(x,r_2)|\nu_t(dx) \vspace{0.20cm}
                    \\
                    \displaystyle \qquad\qquad\qquad\qquad\quad~\, \leq \left(1+\rho^{\nu}_T\right) |r_1-r_2|
                \end{array}
                \end{displaymath}
                It then follows from the Cauchy-Lipschitz theorem that the equation (\ref{eqR}) admits a unique solution.
            \end{proof}
            The limit system (\ref{determ}) is then decoupled by introducing the new notation
            \begin{equation}\label{decoup}
                g[\nu](x,t) = g(x,R_t), \forall (x,t)\in \R_+\times [0,T]
            \end{equation}
            for each continuous function $g(x,r)$. This notation is consistent with the regularity according to the first variable. Indeed, if $g(x,r)$ is of class $\iC^k$ in its first variable with continuous derivatives, then the function $g[\nu](x,t)$ will be as well. Moreover, the function $F^{\nu}$ given in (\ref{fnu}) being continuous, the trajectory $t\rightarrow R_t$ is differentiable and then for each $g(x,r)$ of class $\iC^1$ in its second variable, the function $g[\nu](x,t)$ will be as well. We then introduce the following lemma whose proof is left to the reader.
            \begin{lemma}\label{regdec}
                Let $(x,r)\mapsto g(x,r)$ be a function of class $\iC^1$ on $\R_+\times [0,\bar{R}]$, then for each $M>0$, the local Lipschitz property holds
                \begin{equation}\label{ineqxs}
                    \big| g[\nu](x,s)-g[\nu](y,t) \big| \leq C_{M,T}\left( |x-y| + |t-s| \right) \, ,\, \forall s,t\leq T\, ,\, \forall x,y\in [0,M].
                \end{equation}
            \end{lemma}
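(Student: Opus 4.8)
The plan is to estimate the two-variable increment by freezing one variable at a time, exploiting on the one hand the smoothness of $g$ on a suitable compact set, and on the other hand the Lipschitz regularity in time of the trajectory $t\mapsto R_t$. Concretely, for $x,y\in[0,M]$ and $s,t\in[0,T]$ I would split, using $g[\nu](x,t)=g(x,R_t)$,
\[ \big|g[\nu](x,s)-g[\nu](y,t)\big| = \big|g(x,R_s)-g(y,R_t)\big| \leq \big|g(x,R_s)-g(y,R_s)\big| + \big|g(y,R_s)-g(y,R_t)\big|. \]
The first term varies only the space variable, the second only the resource level, so each can be handled by a one-dimensional mean value estimate.

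For the two increments of $g$, the key observation is that by the a priori bound (\ref{boundR}) the resource stays in the fixed interval $[0,\bar R]$ for all times, so that both pairs of arguments live in the compact set $[0,M]\times[0,\bar R]$, independently of $s,t$. Since $g$ is $\iC^1$ there, both $\partial_x g$ and $\partial_r g$ are continuous, hence bounded on this set; writing $C_M=\sup_{[0,M]\times[0,\bar R]}\big(|\partial_x g|\vee|\partial_r g|\big)$ I obtain $|g(x,R_s)-g(y,R_s)|\leq C_M|x-y|$ and $|g(y,R_s)-g(y,R_t)|\leq C_M|R_s-R_t|$. This reduces the whole estimate to a Lipschitz-in-time bound on the resource, $|R_s-R_t|\leq C_T|s-t|$.

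The core of the argument, and the only place where the model genuinely enters, is therefore this time-regularity of $R$. Here I would invoke that $t\mapsto R_t$ solves (\ref{eqR}), hence is differentiable with $\dot R_t = F^{\nu}(t,R_t) = r_{in}-R_t-\langle\nu_t,\chi(\cdot,R_t)\rangle$. Using the bound $0\leq R_t\leq\bar R$ from (\ref{boundR}), the boundedness of $\chi$ from Assumption \ref{assf}, and the uniform mass bound $\sup_{0\leq u\leq T}\langle\nu_u,1\rangle<\infty$ supplied by Theorem \ref{exist}, one gets $|\dot R_t|\leq r_{in}+\bar R+\|\chi\|_{\infty}\sup_{u\leq T}\langle\nu_u,1\rangle =: C_T$, so that $|R_s-R_t|=\big|\int_t^s\dot R_u\,du\big|\leq C_T|s-t|$. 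Combining the three estimates yields the claim with $C_{M,T}=C_M(1+C_T)$.

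I do not expect any genuine obstacle: the spatial direction is immediate from $\iC^1$-smoothness on a compact set, and the time direction is a routine consequence of the explicit drift $F^{\nu}$ being bounded along the trajectory. The only point deserving care is to make sure the relevant compact set can be taken as the fixed $[0,M]\times[0,\bar R]$ uniformly in $s,t$ — that is, that the resource component is confined to $[0,\bar R]$ for all times — which is precisely what the a priori estimate (\ref{boundR}) guarantees.
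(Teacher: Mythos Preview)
Your argument is correct and is precisely the kind of routine verification the paper has in mind: the paper does not give a proof of this lemma at all (``proof is left to the reader''), noting only beforehand that $F^{\nu}$ is continuous so that $t\mapsto R_t$ is differentiable. Your splitting into a spatial increment controlled by $\sup_{[0,M]\times[0,\bar R]}|\partial_x g|$ and a resource increment controlled via $|\dot R_t|\leq r_{in}+\bar R+\|\chi\|_\infty\sup_{u\leq T}\langle\nu_u,1\rangle$ is exactly the expected completion; the mass bound you invoke holds simply because $\nu\in\iC([0,T],(\iM_F(\R_+),w))$ makes $t\mapsto\langle\nu_t,1\rangle$ continuous on $[0,T]$, so you need not even appeal to Theorem~\ref{exist}.
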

            \noindent We can now rewrite equation (\ref{determ}) in the equivalent form
            \begin{equation}\label{maind}
                \left\{\begin{array}{l}
                    \displaystyle \forall t\in[0,T]\, ,\, \forall f\in\iC^{2}_c(\R_+,\R), \vspace{0.20cm}
                    \\
                    \displaystyle \left\langle\nu_t,f\right\rangle = \left\langle\nu_0,f\right\rangle +  \int_0^t\big\langle\nu_s, L^{\nu}_{s}f + b[\nu](\cdot,s)\iG[f] - d f\big\rangle ds
                \end{array}\right.
            \end{equation}
            with $L^{\nu}_s = L_{R_s}$ for any $s\in[0,T]$. In order to get the mild formulation of this system, let us consider for a given solution $(\nu_t)_{t\in[0,T]}$ of (\ref{maind}) the stochastic differential equation
            \begin{equation}\label{edsf}
                dX_t = \zeta[\nu](X_t,t)dt + \sqrt{2D[\nu](X_t,t)}\,dW_t
            \end{equation}
            that admits for every $x\geq 0$ and $s\in[0,T]$ a unique weak solution that starts at $x$ at time $s\geq 0$ (see Lemma \ref{appenBf} in Appendix \ref{appB}). This solution is almost surely non negative at any time and satisfies the control of moments
            \begin{equation}\label{moment}
                \forall p>0,\, \E_{x,s}\left\{ \sup_{s\leq u\leq T}(X_{u})^p \right\} \leq C_{p,T}(1+x^p)
            \end{equation}
            (see Lemma \ref{appenBf}). Let us denote by $(X^{\nu,x}_{s,t})_{t\in[s,T]}$ its trajectory and $(P^{\nu}_{s,t})_{s,t\geq 0}$ its semigroup defined by
            \begin{equation}
                P^{\nu}_{s,t}\phi(x) = \E\left[ \phi(X^{\nu,x}_{s,s+t}) \right],\,\forall \phi\in\iC_b(\R_+,\R).
            \end{equation}
            Its infinitesimal operator is $(L^{\nu}_s)_{s\in[0,T]}$ and we have the following result.
            \begin{lemma}\label{lemMild}
                Equation (\ref{maind}) admits the mild formulation
                \begin{equation}\label{mild}
                \left\{\begin{array}{l}
                    \displaystyle \forall t\geq 0,\forall \phi\in\iC_b(\R_+,\R) \vspace{0.20cm}
                    \\
                    \displaystyle \langle\nu_t,\phi\rangle = \langle\nu_0,P^{\nu}_{0,t}\phi\rangle + \int_0^t\big\langle\nu_s, b[\nu](\cdot,s)\iG[P^{\nu}_{s,t-s}\phi] - d P^{\nu}_{s,t-s}\phi \big\rangle ds. 
                \end{array}\right.
                \end{equation}
            \end{lemma}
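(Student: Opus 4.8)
The plan is to establish the mild formulation by a Duhamel (variation of constants) argument in which the generator $L^{\nu}_s$ appearing in the weak formulation (\ref{maind}) is cancelled against the backward evolution of the semigroup $(P^{\nu}_{s,\tau})$. Fix $t\in(0,T]$ and a test function $\phi$, and set $\psi(x,s):=P^{\nu}_{s,t-s}\phi(x)=\E\big[\phi(X^{\nu,x}_{s,t})\big]$ for $s\in[0,t]$. The whole identity follows once we show that $s\mapsto\langle\nu_s,\psi(\cdot,s)\rangle$ is absolutely continuous on $[0,t]$ with
\[ \frac{d}{ds}\langle\nu_s,\psi(\cdot,s)\rangle = \big\langle\nu_s, b[\nu](\cdot,s)\iG[P^{\nu}_{s,t-s}\phi] - d\,P^{\nu}_{s,t-s}\phi\big\rangle , \]
since integrating between $0$ and $t$ and using $\psi(\cdot,t)=\phi$ together with $\psi(\cdot,0)=P^{\nu}_{0,t}\phi$ yields exactly (\ref{mild}).

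First I would record the backward Kolmogorov property of the time-inhomogeneous diffusion (\ref{edsf}): by the Markov property and the fact that $(L^{\nu}_s)_{s}$ is the infinitesimal operator of $(P^{\nu}_{s,t})$, the map $\psi$ solves $\partial_s\psi + L^{\nu}_s\psi = 0$ on $[0,t]$ with terminal datum $\phi$, the regularity needed to give this a classical meaning being provided by Lemma \ref{appenBf}. Next I would upgrade the weak formulation (\ref{maind}), valid a priori only for time-independent $f\in\iC^2_c$, to time-dependent test functions $f(x,s)$ of class $\iC^{1,2}$ with suitably controlled growth, obtaining the It\^o-type identity
\[ \langle\nu_t,f(\cdot,t)\rangle = \langle\nu_0,f(\cdot,0)\rangle + \int_0^t\big\langle\nu_s,\partial_sf(\cdot,s)+L^{\nu}_sf(\cdot,s)+b[\nu](\cdot,s)\iG[f(\cdot,s)] - d\,f(\cdot,s)\big\rangle ds. \]
This is proved by a standard telescoping argument: partition $[0,t]$, write each increment $\langle\nu_{t_{k+1}},f(\cdot,t_{k+1})\rangle-\langle\nu_{t_k},f(\cdot,t_k)\rangle$ as the sum of a term handled by (\ref{maind}) with the frozen test function $f(\cdot,t_{k+1})$ and a term measuring the time-increment $f(\cdot,t_{k+1})-f(\cdot,t_k)$, and let the mesh go to $0$ using the continuity of $s\mapsto\nu_s$ together with the moment bound (\ref{moment}) and the uniform mass bound $\sup_{s\leq T}\langle\nu_s,1\rangle<\infty$. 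Choosing $f=\psi$ makes the terms $\partial_s\psi + L^{\nu}_s\psi$ vanish, and only the birth and death contributions survive, which is the claimed identity.

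Two approximation layers are needed to make this rigorous. On the one hand, $\psi(\cdot,s)=P^{\nu}_{s,t-s}\phi$ is never compactly supported, so I would first run the argument with truncated test functions $\psi\,\chi_n$ (with $\chi_n$ a smooth cutoff equal to $1$ on $[0,n]$) and then remove the cutoff, the error terms being controlled by the moments (\ref{moment}) and the boundedness of $b,d$ and the Lipschitz control of the coefficients from Assumption \ref{assf}. On the other hand, the statement is for $\phi\in\iC_b$; I would prove the identity first for smooth compactly supported $\phi$ and extend to $\iC_b$ by bounded pointwise approximation $\phi_n\to\phi$, passing to the limit inside $\langle\nu_s,\cdot\rangle$ and through the Feller semigroup $P^{\nu}$ by dominated convergence, again using $\sup_{s\leq T}\langle\nu_s,1\rangle<\infty$ and the boundedness of $b$ and $d$.

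The main obstacle is the regularity of the semigroup $P^{\nu}_{s,\tau}$. Because $D(0,r)=0$ makes the diffusion degenerate at the origin and $\sqrt{2D}$ is only H\"older continuous there even though $D$ is Lipschitz, it is delicate to guarantee that $\psi=P^{\nu}_{s,t-s}\phi$ is $\iC^{1,2}$ and a genuine classical solution of the backward equation, so that $L^{\nu}_s\psi$ makes pointwise sense and enters the telescoping estimate with a controlled remainder. I expect this to be the step where the properties of (\ref{edsf}) established in Lemma \ref{appenBf} — existence and uniqueness of the weak solution, non-negativity, and the moment control (\ref{moment}) — are essential; if necessary one regularizes the coefficients and $\phi$, derives the identity in the non-degenerate smooth regime where parabolic theory applies, and passes to the limit.
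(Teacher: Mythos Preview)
Your proposal is correct and follows essentially the same route as the paper: extend the weak formulation to time-dependent test functions $f_s\in\iC^{1,2}_b$ (the paper states this as ``classically follows'' while you spell out the telescoping argument), choose $f_s=P^{\nu}_{s,t-s}\phi$ so that the backward Kolmogorov equation $\partial_sf_s+L^{\nu}_sf_s=0$ kills the generator terms, and finally pass from $\phi\in\iC^2_b$ to $\phi\in\iC_b$ by density. You are right to flag the regularity of $P^{\nu}_{s,t-s}\phi$ as the delicate point; the paper's proof here simply asserts the backward equation holds, deferring the careful analysis of this regularity to Theorem \ref{theoPDE} later in the section.
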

            \begin{proof} 
                It classically follows from equation (\ref{maind}), that for each test function $f_s(x) = f(s,x)\in\iC^{1,2}_b([0,T]\times\R_+,\R)$ such that $\partial_sf_s + L^{\nu}_sf_s$ is bounded and $t\in [0,T]$,
                \begin{equation}\label{eqsN}
                    \left\langle\nu_t,f_t\right\rangle = \left\langle\nu_0,f_0\right\rangle +  \int_0^t\big\langle\nu_s, \partial_sf_s + L^{\nu}_sf_s + b[\nu](\cdot,s)\iG[f_s] - d f_s \big\rangle ds.
                \end{equation}
                In addition, for each test function $\phi\in\iC^2_b(\R_+,\R)$, the function
                \[ f_s(x) = P^{\nu}_{s,t-s}\phi(x), \forall (s,x)\in [0,t]\times\R_+ \]
                is a solution of the problem
                \begin{displaymath}\left\{\begin{array}{l}
                    \displaystyle \partial_sf_s(x) + L^{\nu}_sf_s(x) = 0, (s,x)\in[0,t]\times\R_+ \vspace{0.20cm}
                    \\
                    \displaystyle f_{\big| s=t} = \phi \,\textrm{ sur }\R_+.
                \end{array}\right.
                \end{displaymath}
                We then use this function in (\ref{eqsN}) and obtain (\ref{mild}) for $\phi\in\iC^{2}_b(\R_+,\R)$. It is easily extended by a density argument to continuous and bounded test functions.
            \end{proof}
            \noindent We also have the following property on the semi-group, which means that it is consistent with the linear space $\mathcal{LB}(\R_+,\R)$ :
            \begin{lemma}\label{lemContrz}
                The operator $P^{\nu}_{s,t}$ satisfies the uniform property
                \begin{equation}\label{ineqP}
                    \sup_{t+s\leq T}\left| P^{\nu}_{s,t}\phi(x)-P^{\nu}_{s,t}\phi(y) \right|\leq C\|\phi\|_{\textrm{Lip}}|x-y|, \forall \phi\in\mathcal{LB}(\R_+,\R).
                \end{equation}
            \end{lemma}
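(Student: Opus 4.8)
The plan is to deduce the operator bound (\ref{ineqP}) from a synchronous coupling estimate for the trait diffusion (\ref{edsf}). Since every $\phi\in\mathcal{LB}(\R_+,\R)$ satisfies $|\phi(a)-\phi(b)|\leq\|\phi\|_{\textrm{Lip}}|a-b|$, for any starting time $s$ and duration $t$ with $s+t\leq T$ one has
\[
\left| P^{\nu}_{s,t}\phi(x)-P^{\nu}_{s,t}\phi(y) \right| = \big| \E\big[\phi(X^{\nu,x}_{s,s+t})-\phi(X^{\nu,y}_{s,s+t})\big]\big| \leq \|\phi\|_{\textrm{Lip}}\,\E\big| X^{\nu,x}_{s,s+t}-X^{\nu,y}_{s,s+t}\big|,
\]
provided the two trajectories are realised on the same probability space with the same Brownian motion. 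Because the drift $\zeta$ is Lipschitz and the diffusion coefficient $\sqrt{2D}$ is $1/2$-H\"older (as explained below), pathwise uniqueness holds for (\ref{edsf}) in the sense of Yamada--Watanabe, so such strong solutions do exist and can be driven by a common $W$. It therefore suffices to establish the synchronous coupling estimate $\E| X^{\nu,x}_{s,s+t}-X^{\nu,y}_{s,s+t}|\leq C|x-y|$ uniformly in $s+t\leq T$.

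To this end, fix $s$ and set $Z_u = X^{\nu,x}_{s,u}-X^{\nu,y}_{s,u}$ for $u\in[s,T]$, so that $Z_s=x-y$ and, writing $\sigma_r(\cdot)=\sqrt{2D[\nu](\cdot,r)}$,
\[
Z_u = (x-y) + \int_s^u\big(\zeta[\nu](X^{\nu,x}_{s,r},r)-\zeta[\nu](X^{\nu,y}_{s,r},r)\big)dr + \int_s^u\big(\sigma_r(X^{\nu,x}_{s,r})-\sigma_r(X^{\nu,y}_{s,r})\big)dW_r.
\]
I would then invoke the Yamada--Watanabe device: let $(\phi_n)$ be the standard sequence of even functions of class $\iC^2$ with $\phi_n(z)\uparrow|z|$, $|\phi_n'|\leq 1$, and $0\leq\phi_n''(z)\leq 2/(n|z|)$ supported on a shrinking annulus $a_n\leq|z|\leq a_{n-1}$. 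Applying It\^o's formula to $\phi_n(Z_u)$ and taking expectations removes the martingale term (the moment bounds (\ref{moment}) from Lemma \ref{appenBf} guarantee the required integrability), leaving a first-order and a second-order contribution.

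It then remains to estimate these two contributions. For the drift, the Lipschitz condition in Assumption \ref{assf} gives $|\zeta(a,r)-\zeta(b,r)|\leq\|\zeta\|_{\textrm{Lip}}|a-b|$ uniformly in $r\in[0,\bar R]$, and $R_r\in[0,\bar R]$ by (\ref{boundR}); combined with $|\phi_n'|\leq 1$ this bounds the first-order term by $\|\zeta\|_{\textrm{Lip}}\int_s^u\E|Z_r|\,dr$. For the second-order term, the key point is that since $D$ is Lipschitz and non-negative, the elementary inequality $|\sqrt p-\sqrt q|\leq\sqrt{|p-q|}$ yields $\big(\sigma_r(a)-\sigma_r(b)\big)^2\leq 2\|D\|_{\textrm{Lip}}|a-b|$, so that, on the support of $\phi_n''$, the bound $\phi_n''(Z_r)\leq 2/(n|Z_r|)$ forces the whole second-order term to be dominated by $2\|D\|_{\textrm{Lip}}\,T/n\to 0$. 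Passing to the limit via $\phi_n(Z_u)\to|Z_u|$ gives
\[
\E|Z_u|\leq |x-y| + \|\zeta\|_{\textrm{Lip}}\int_s^u\E|Z_r|\,dr,
\]
and Gronwall's lemma produces $\E|Z_u|\leq|x-y|\,e^{\|\zeta\|_{\textrm{Lip}}T}$ for all $u\leq T$, uniformly in $s$, which is exactly the desired coupling estimate with $C=e^{\|\zeta\|_{\textrm{Lip}}T}$.

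The main obstacle is precisely the second-order term. The degeneracy $D(0,r)=0$ in Assumption \ref{assf} forces $\sqrt{2D}$ to be only $1/2$-H\"older near $0$, so the classical Lipschitz-in-the-diffusion Gronwall argument is unavailable; the Yamada--Watanabe regularisation resolves this because the H\"older exponent $1/2$ matches the $1/|z|$ singularity allowed in $\phi_n''$, making the square of the diffusion increment integrable against $\phi_n''$ with only an $O(1/n)$ error. Everything else is routine once the uniform-in-$r$ Lipschitz bounds of Assumption \ref{assf} and the resource bound (\ref{boundR}) are in hand.
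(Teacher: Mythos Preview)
Your proposal is correct and follows essentially the same approach as the paper. The paper's proof simply reduces to the coupling estimate $\E|X^{\nu,x}_{s,s+t}-X^{\nu,y}_{s,s+t}|\leq C|x-y|$ and invokes Lemma~\ref{appenBf} in Appendix~\ref{appB}; the proof of that lemma establishes the coupling bound via the Tanaka formula after showing the local time of the difference process at $0$ vanishes (using precisely your estimate $|\sqrt{D(a,r)}-\sqrt{D(b,r)}|^2\leq\|D\|_{\textrm{Lip}}|a-b|$), then applies the Lipschitz bound on $\zeta$ and Gronwall. Your Yamada--Watanabe approximating sequence $(\phi_n)$ is the standard equivalent route to the same conclusion, so there is no substantive difference.
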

            \begin{proof}
                For all $s, t\geq 0$ such that $s+t\leq T$ and $x,y\geq 0$, we have
                \begin{displaymath}
                    \begin{array}{l}
                        \displaystyle \left| P^{\nu}_{s,t}\phi(x) - P^{\nu}_{s,t}\phi(y) \right| =  \left| \E\left[ \phi(X^{\nu,x}_{s,t+s}) - \phi(X^{\nu,y}_{s,t+s}) \right] \right| \vspace{0.20cm}
                        \\
                        \displaystyle \hspace{3.5cm} \leq \E\left( \left| \phi(X^{\nu,x}_{s,t+s}) - \phi(X^{\nu,y}_{s,t+s}) \right| \right) \vspace{0.20cm}
                        \\
                        \displaystyle \hspace{3.5cm} \leq \|\phi\|_{\textrm{Lip}}\E\left( \left| X^{\nu,x}_{s,t+s} - X^{\nu,y}_{s,t+s} \right| \right) \vspace{0.20cm}
                    \end{array}
                \end{displaymath}
                where $(X^{\nu,x}_{s,u})_{u\in[s,T]}$ and $(X^{\nu,y}_{s,u})_{u\in[s,T]}$ satisfy the stochastic differential equation (\ref{edsf}) with the same Brownian motion. It then follows from Lemma \ref{appenBf} in Appendix \ref{appB} that
                \[ \left| P^{\nu}_{s,t}\phi(x) - P^{\nu}_{s,t}\phi(y) \right| \leq C \|\phi\|_{\textrm{Lip}}|x-y|, \forall s+t\leq T.  \]
            \end{proof}
            
        \subsection{A simpler case of uniqueness}
            In this section, we show a uniqueness result when the diffusion coefficient does not depend on the resource dynamics. The method is adapted from that of \cite{joaMel} with a local H\"older continuous diffusion coefficient for the stochastic differential equation that describes the individual trait dynamics. Let us introduce the distance on $\iM_F(\R_+)$ defined by
            \begin{equation}
                |||\mu_1-\mu_2|||_{\mathcal{LB}} = \sup_{\phi\in\mathcal{LB}(\R_+,\R) ; \|\phi\|_{\mathcal{LB}}\leq 1}\langle\mu_1-\mu_2,\phi\rangle, \forall \mu_1,\mu_2\in\iM_F(\R_+).
            \end{equation}
            We start with the following results that highlight the consistence of the decoupling and the semigroup according to the solution considered.
            \begin{lemma}\label{lemLip}
                Let $(x,r)\mapsto g(x,r)$ be a Lipschitz continuous fonction from $\R_+\times [0,\bar{R}]$ to $\R$ and $\nu^1, \nu^2\in\iC([0,T],(\iM_F(\R_+),w))$ be two solutions of (\ref{maind}), then 
                \begin{equation}\label{ineqgf}
                    \sup_{(x,s)\in\R_+\times[0,t]}\left| g[\nu^1](x,s) - g[\nu^2](x,s) \right| \leq C\|g\|_{\textrm{Lip}}\int_0^t|||\nu^1_u-\nu^2_u|||_{\mathcal{LB}}\, du, \forall t\in [0,T].
                \end{equation}
            \end{lemma}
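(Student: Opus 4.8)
The plan is to reduce the claim to a stability estimate on the resource trajectories $R^1,R^2$ associated with $\nu^1,\nu^2$, and then close that estimate by Gr\"onwall's lemma. Since $g[\nu^i](x,s)=g(x,R^i_s)$ by the definition (\ref{decoup}), the Lipschitz continuity of $g$ in its second argument immediately gives, uniformly in $x\geq 0$,
\[
\big| g[\nu^1](x,s)-g[\nu^2](x,s) \big| = \big| g(x,R^1_s)-g(x,R^2_s) \big| \leq \|g\|_{\textrm{Lip}}\,|R^1_s-R^2_s|,
\]
so it suffices to prove the bound $\sup_{s\leq t}|R^1_s-R^2_s|\leq C_T\int_0^t|||\nu^1_u-\nu^2_u|||_{\mathcal{LB}}\,du$.

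Next I would subtract the two integral equations (\ref{eqR}), which share the common initial value $R_0$, to obtain $R^1_t-R^2_t=\int_0^t\big[F^{\nu^1}(s,R^1_s)-F^{\nu^2}(s,R^2_s)\big]\,ds$, and then decompose the integrand. The key step is to split the consumption terms by adding and subtracting $\langle\nu^1_s,\chi(\cdot,R^2_s)\rangle$:
\[
\langle\nu^1_s,\chi(\cdot,R^1_s)\rangle - \langle\nu^2_s,\chi(\cdot,R^2_s)\rangle = \langle\nu^1_s,\chi(\cdot,R^1_s)-\chi(\cdot,R^2_s)\rangle + \langle\nu^1_s-\nu^2_s,\chi(\cdot,R^2_s)\rangle.
\]
The first term is controlled using the Lipschitz continuity of $\chi$ in $r$ together with the total-mass bound, giving a contribution $\leq \rho^{\nu^1}_T|R^1_s-R^2_s|$; combined with the $-(R^1_s-R^2_s)$ coming from the dilution, this produces the diagonal term $(1+\rho^{\nu^1}_T)|R^1_s-R^2_s|$. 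The second term is precisely where the norm $|||\cdot|||_{\mathcal{LB}}$ enters: since $\chi(\cdot,R^2_s)$ is bounded by (A.2) and Lipschitz by (A.1), uniformly over $R^2_s\in[0,\bar{R}]$, its $\mathcal{LB}$-norm is bounded by a constant, whence $|\langle\nu^1_s-\nu^2_s,\chi(\cdot,R^2_s)\rangle|\leq C|||\nu^1_s-\nu^2_s|||_{\mathcal{LB}}$.

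Assembling these bounds yields
\[
|R^1_t-R^2_t| \leq (1+\rho^{\nu^1}_T)\int_0^t|R^1_s-R^2_s|\,ds + C\int_0^t|||\nu^1_s-\nu^2_s|||_{\mathcal{LB}}\,ds,
\]
and since the driving term $\int_0^t|||\nu^1_s-\nu^2_s|||_{\mathcal{LB}}\,ds$ is nondecreasing in $t$, Gr\"onwall's lemma gives $|R^1_t-R^2_t|\leq C_T\int_0^t|||\nu^1_s-\nu^2_s|||_{\mathcal{LB}}\,ds$ with $C_T=C\,e^{(1+\rho^{\nu^1}_T)T}$; the constant is finite because $\sup_{s\leq T}\langle\nu^1_s,1\rangle<\infty$ for any solution, so that $\rho^{\nu^1}_T<\infty$. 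Taking the supremum over $s\leq t$ and combining with the first display finishes the proof. The only genuinely delicate point is the add-and-subtract decomposition: one must test the measure difference $\nu^1_s-\nu^2_s$ against the single bounded Lipschitz function $\chi(\cdot,R^2_s)$ of $x$, so that the residual resource-difference part is carried by the mass of $\nu^1_s$ alone (already absorbed into $\rho^{\nu^1}_T$); everything else is a routine Gr\"onwall argument.
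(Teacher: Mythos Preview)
Your proof is correct and follows essentially the same approach as the paper: reduce to a stability estimate on $R^1-R^2$ via the Lipschitz bound on $g$, add and subtract $\langle\nu^1_s,\chi(\cdot,R^2_s)\rangle$ to split the consumption difference into a diagonal part controlled by $\rho^{\nu^1}_T|R^1_s-R^2_s|$ and an off-diagonal part bounded by $|||\nu^1_s-\nu^2_s|||_{\mathcal{LB}}$, and close with Gr\"onwall. The paper's argument is identical up to notation.
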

            \begin{proof}
                Let us denote by $R^{\nu^1}$ and $R^{\nu^2}$ the processes that are solutions of (\ref{eqR}) for $\nu^1$ and $\nu^2$ respectively, for the same initial condition $R_0$. Then thanks to the decoupling formula (\ref{decoup}) we have for all $t\in [0,T]$ and $(x,s)\in\R_+\times[0,t]$,
                \begin{equation}\label{ineqgs}
                    \left| g[\nu^1](x,s) - g[\nu^2](x,s) \right| = \big| g(x,R^{\nu^1}_s) - g(x,R^{\nu^2}_s) \big| \leq \|g\|_{\textrm{Lip}}\big| R^{\nu^1}_s - R^{\nu^2}_s \big|.
                \end{equation}
                In addition, it follows from (\ref{eqR}) that
                \begin{displaymath}
                    \begin{array}{l}
                        \displaystyle \big| R_s^{\nu^1} - R^{\nu^2}_s \big| = \left| \int_0^s\left\{ R^{\nu^2}_u-R^{\nu^1}_u - \int_0^{\infty}\chi(x,R^{\nu^1}_u)\nu^1_u(dx) + \int_0^{\infty}\chi(x,R^{\nu^2}_u)\nu^2_u(dx) \right\}du \right| \vspace{0.20cm}
                        \\
                        \displaystyle \hspace{2.1cm} \leq \int_0^s\bigg\{ \left|R^{\nu^2}_u-R^{\nu^1}_u\right| + \int_0^{\infty}\left|\chi(x,R^{\nu^2}_u)-\chi(x,R^{\nu^1}_u)\right|\nu^1_u(dx) \vspace{0.20cm} 
                        \\
                        \displaystyle \hspace{7cm} +~ \left|\int_0^{\infty}\chi(x,R^{\nu^2}_u)(\nu^2_u-\nu^1_u)(dx)\right| \bigg\}du \vspace{0.20cm}
                        \\
                        \displaystyle \hspace{2.1cm} \leq \left( 1 + \|\chi\|_{\textrm{Lip}}\sup_{0\leq z\leq T}\left\langle\nu^1_z,1\right\rangle \right)\int_0^s \left|R^1_u-R^2_u\right|du + (\|\chi\|_{\infty}+\|\chi\|_{\textrm{Lip}})\int_0^t|||\nu^1_u-\nu^2_u|||_{\mathcal{LB}}\,du
                    \end{array}
                \end{displaymath}
                and by the Gronwall lemma, for all $s\leq t$,
                \[ \big| R_s^{\nu^1} - R^{\nu^2}_s \big| \leq (\|\chi\|_{\infty}+\|\chi\|_{\textrm{Lip}}) \exp\left\{\left( 1 + \|\chi\|_{\textrm{Lip}}\sup_{0\leq z\leq T}\left\langle\nu^1_z,1\right\rangle \right)T\right\} \int_0^t|||\nu^1_u-\nu^2_u|||_{\mathcal{LB}}\,du . \]
                We introduce this inequality in (\ref{ineqgs}) to obtain (\ref{ineqgf}).
            \end{proof}
            \begin{lemma}\label{lemContr}
                For each test function $\phi\in\mathcal{LB}(\R_+,\R)$ and $\nu^1, \nu^2\in\iC([0,T],(\iM_F(\R_+),w))$ two solutions of (\ref{maind}), one has 
                \begin{equation}\label{ineqPn}
                    \sup_{s\leq t}\left\| \left(P^{\nu^1}_{s,t-s}-P^{\nu^2}_{s,t-s}\right)\phi \right\|_{\infty} \leq C\|\phi\|_{\mathrm{Lip}}\int_0^t|||\nu^1_u-\nu^2_u|||_{\mathcal{LB}}\,du.
                \end{equation}
            \end{lemma}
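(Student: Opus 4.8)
The plan is to exploit the special feature of this section: since the diffusion coefficient $D$ does not depend on the resource, the two flows $X^{\nu^1,x}_{s,\cdot}$ and $X^{\nu^2,x}_{s,\cdot}$ solve (\ref{edsf}) with the \emph{same} diffusion coefficient $\sqrt{2D}$ and differ only through their drifts $\zeta[\nu^1]$ and $\zeta[\nu^2]$. Because $\phi\in\mathcal{LB}(\R_+,\R)$ is Lipschitz,
\[
\left| \left(P^{\nu^1}_{s,t-s}-P^{\nu^2}_{s,t-s}\right)\phi(x) \right| = \left| \E\left[\phi(X^{\nu^1,x}_{s,t})-\phi(X^{\nu^2,x}_{s,t})\right]\right| \leq \|\phi\|_{\textrm{Lip}}\,\E\left| X^{\nu^1,x}_{s,t} - X^{\nu^2,x}_{s,t} \right|,
\]
so the whole statement reduces to bounding the $\eL^1$ distance between the two trajectories started at the \emph{same} point $x$ at time $s$, uniformly in $(x,s)$. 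I would therefore couple the two equations by driving them with a common Brownian motion $W$, exactly as in the proof of Lemma \ref{lemContrz}, and set $Y_u = X^{\nu^1,x}_{s,u}-X^{\nu^2,x}_{s,u}$.

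The difficulty is that $\sqrt{2D}$ is only H\"older-$1/2$ continuous because $D$ degenerates at the boundary (Assumption \ref{assf}, \textsc{(A.3)}); from the Lipschitz property of $D$ one only gets $|\sqrt{D(a)}-\sqrt{D(b)}|\leq \sqrt{\|D\|_{\textrm{Lip}}}\,|a-b|^{1/2}$, so a direct Gronwall argument on $\E|Y_u|$ via It\^o's formula fails on the martingale part. I would instead invoke the Yamada--Watanabe regularisation used in Appendix \ref{appB}: take smooth even functions $\psi_n\uparrow|\cdot|$ with $0\leq\psi_n'\leq 1$ and $\psi_n''(y)\leq \frac{2}{n|y|}$ on its support, so that $|y|\,\psi_n''(y)\leq \frac{2}{n}$. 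Applying It\^o to $\psi_n(Y_u)$, the martingale term has zero expectation, while the second-order term is controlled by
\[
\tfrac12\int_s^u \psi_n''(Y_r)\,\big|\sqrt{2D(X^{\nu^1,x}_{s,r})}-\sqrt{2D(X^{\nu^2,x}_{s,r})}\big|^2\,dr \leq \int_s^u \psi_n''(Y_r)\,\|D\|_{\textrm{Lip}}\,|Y_r|\,dr \leq \tfrac{C}{n}(u-s),
\]
which vanishes as $n\to\infty$.

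The drift term is the heart of the argument, and I would split it as
\[
\zeta[\nu^1](X^{\nu^1,x}_{s,r},r)-\zeta[\nu^2](X^{\nu^2,x}_{s,r},r) = \big(\zeta[\nu^1](X^{\nu^1,x}_{s,r},r)-\zeta[\nu^1](X^{\nu^2,x}_{s,r},r)\big) + \big(\zeta[\nu^1](X^{\nu^2,x}_{s,r},r)-\zeta[\nu^2](X^{\nu^2,x}_{s,r},r)\big).
\]
Since $\zeta$ is Lipschitz in its first variable uniformly in the second (Assumption \ref{assf}, \textsc{(A.1)}), the first bracket is bounded by $\|\zeta\|_{\textrm{Lip}}|Y_r|$ and, together with $\psi_n'\leq 1$, produces a term $\leq \|\zeta\|_{\textrm{Lip}}\int_s^u\E|Y_r|\,dr$ suitable for Gronwall. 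The second bracket is exactly the quantity controlled by Lemma \ref{lemLip} applied to $g=\zeta$, which bounds it uniformly in $x$ by $C\int_0^t|||\nu^1_w-\nu^2_w|||_{\mathcal{LB}}\,dw$; this furnishes the source term. Taking expectations, letting $n\to\infty$ (using $\psi_n\uparrow|\cdot|$, $\psi_n'\to\operatorname{sgn}$ and the moment bound (\ref{moment}) for uniform integrability), and applying Gronwall's lemma yields
\[
\sup_{(x,s)}\,\E\left| X^{\nu^1,x}_{s,t}-X^{\nu^2,x}_{s,t}\right| \leq C\int_0^t|||\nu^1_u-\nu^2_u|||_{\mathcal{LB}}\,du,
\]
whence (\ref{ineqPn}) follows after multiplying by $\|\phi\|_{\textrm{Lip}}$. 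The main obstacle is precisely the non-Lipschitz (H\"older) diffusion coefficient, which forces the Yamada--Watanabe approximation in place of a naive Gronwall estimate; the only bookkeeping to watch is that the Lipschitz constant of $\zeta$, the H\"older constant of $\sqrt{D}$, and the bound of Lemma \ref{lemLip} are all global, so every constant stays independent of the starting data $(x,s)$ and the final supremum over $s\leq t$ is legitimate.
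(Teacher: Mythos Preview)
Your proof is correct and follows essentially the same route as the paper: reduce to $\E|X^{\nu^1,x}_{s,t}-X^{\nu^2,x}_{s,t}|$ via the Lipschitz bound on $\phi$, couple the two diffusions with a common Brownian motion, kill the second-order contribution coming from the H\"older diffusion coefficient, split the drift exactly as you do, invoke Lemma \ref{lemLip} for the source term, and close by Gronwall. The only cosmetic difference is that the paper handles the second-order term via the local time argument of \cite{revYor}, Ch.~IX, Lemma~3.3 (showing $L^0(Y)=0$ and then applying Tanaka's formula to write $|Y_{t'}|=\int_s^{t'}\mathrm{sign}(Y_u)\,dY_u$), whereas you use the equivalent Yamada--Watanabe approximation $\psi_n\uparrow|\cdot|$; both devices serve the same purpose and yield the same estimate.
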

            \begin{proof}
                For all $0\leq s\leq t\geq T$ and $x\geq 0$, we have
                \begin{equation}\label{diffP}
                    \begin{array}{l}
                        \displaystyle \left| P^{\nu^1}_{s,t-s}\phi(x) - P^{\nu^2}_{s,t-s}\phi(x) \right| =  \left| \E\left[ \phi(X^{\nu^1,x}_{s,t}) - \phi(X^{\nu^2,x}_{s,t}) \right] \right| \vspace{0.20cm}
                        \\
                        \displaystyle \hspace{4.25cm} \leq \E\left( \left| \phi(X^{\nu^1,x}_{s,t}) - \phi(X^{\nu^2,x}_{s,t}) \right| \right) \vspace{0.20cm}
                        \\
                        \displaystyle \hspace{4.25cm} \leq \|\phi\|_{\textrm{Lip}}\E\left( \left| X^{\nu^1,x}_{s,t} - X^{\nu^2,x}_{s,t} \right| \right)
                    \end{array}
                \end{equation}
                where $(X^{\nu^1,x}_{s,u})_{u\in[s,T]}$ and $(X^{\nu^2,x}_{s,u})_{u\in[s,T]}$ satisfy (\ref{edsf}) with the same Brownian motion. In addition, using the inequality $|\sqrt{a} - \sqrt{b}|\leq \sqrt{|a-b|}$ for all $a,b\geq 0$, we have for all $s\leq t'\leq t$ and $\varepsilon>0$,
                \begin{displaymath}\begin{array}{l}
                    \displaystyle \int_{s}^{t'} 1_{\{0< X^{\nu^1,x}_{s,u}-X^{\nu^2,x}_{s,u}\leq \varepsilon\}}\frac{d\left\langle X^{\nu^1,x}_{s,\cdot}-X^{\nu^2,x}_{s,\cdot}\right\rangle_u}{X^{\nu^1,x}_{s,u}-X^{\nu^2,x}_{s,u}} = \int_{s}^{t'} 1_{\{0< X^{\nu^1,x}_{s,u}-X^{\nu^2,x}_{s,u}\leq \varepsilon\}}\frac{2\left|\sqrt{D(X^{\nu^1,x}_{s,u})} - \sqrt{D(X^{\nu^2,x}_{s,u})}\right|^2}{X^{\nu^1,x}_{s,u}-X^{\nu^2,x}_{s,u}}du \vspace{0.20cm}
                    \\
                    \displaystyle \hspace{7.1cm} \leq 2\int_{s}^{t'} 1_{\{0< X^{\nu^1,x}_{s,u}-X^{\nu^2,x}_{s,u}\leq \varepsilon\}}\frac{\left|D(X^{\nu^1,x}_{s,u}) - D(X^{\nu^2,x}_{s,u})\right|}{X^{\nu^1,x}_{s,u}-X^{\nu^2,x}_{s,u}}du \vspace{0.20cm}
                    \\
                    \displaystyle \hspace{7.1cm} \leq 2\|D\|_{\textrm{Lip}}\int_{s}^{t'} 1_{\{0< X^{\nu^1,x}_{s,u}-X^{\nu^2,x}_{s,u}\leq \varepsilon\}}du < +\infty.
                \end{array}
                \end{displaymath}
                We deduce with \cite{revYor}{, Ch. IX, lemma 3.3} that the process $X^{\nu^1,x}_{s,\cdot}-X^{\nu^2,x}_{s,\cdot}$ has a zero local time at $0$ and the Tanaka formula gives for all $s\leq t'\leq t$
                \[ \left| X^{\nu^1,x}_{s,t'} - X^{\nu^2,x}_{s,t'} \right| = \int_s^{t'}\textrm{sign}(X^{\nu^1,x}_{s,u} - X^{\nu^2,x}_{s,u})d(X^{\nu^1,x}_{s,u} - X^{\nu^2,x}_{s,u}). \]
                Thanks to the control of moments (\ref{momest}) in Lemma \ref{appenBf}, that implies
                \begin{displaymath}
                    \begin{array}{l}
                        \displaystyle \E\left( \left| X^{\nu^1,x}_{s,t'} - X^{\nu^2,x}_{s,t'} \right| \right) =  \E\left(\int_s^{t'}\textrm{sign}( X^{\nu^1,x}_{s,u} - X^{\nu^2,x}_{s,u})\left[ \zeta[\nu^1](X^{\nu^1,x}_{s,u},u) - \zeta[\nu^2](X^{\nu^2,x}_{s,u},u) \right]du \right) \vspace{0.20cm}
                        \\
                        \displaystyle \hspace{3.45cm} \leq \E\left(\int_s^{t'}\left| \zeta[\nu^1](X^{\nu^1,x}_{s,u},u) - \zeta[\nu^2](X^{\nu^2,x}_{s,u},u) \right|du \right) \vspace{0.20cm}
                        \\
                        \displaystyle \hspace{3.45cm} \leq \int_s^{t'}\E\left(\big| \zeta[\nu^1] - \zeta[\nu^2] \big|(X^{\nu^1,x}_{s,u},u) \right)du + \|\zeta[\nu^2]\|_{\textrm{Lip}} \int_s^{t'}\E\left(\left| X^{\nu^1,x}_{s,u} - X^{\nu^2,x}_{s,u} \right| \right)du.
                    \end{array}
                \end{displaymath}
                Using Lemma \ref{lemLip} and Gronwall's lemma, it follows that
                \[ \E\left( \left| X^{\nu^1,x}_{s,t'} - X^{\nu^2,x}_{s,t'} \right| \right) \leq C(t-s)e^{\|\zeta\|_{\textrm{Lip}}(t'-s)}\int_s^t |||\nu^1_u-\nu^2_u|||_{\mathcal{LB}}\,du. \]
                We inject this inequality for $t'=t$ in (\ref{diffP}) to obtain (\ref{ineqPn}).
            \end{proof}
            \noindent The main result for this part follows:
            \begin{theorem}\label{uniqone}
                Under Assumptions \ref{assf} and \ref{assK}, the system (\ref{determ}) admits a unique solution in $\iC([0,T],\etr)$ that satisfies $\sup_{0\leq t\leq T}\langle\nu_t,1\rangle < \infty$.
            \end{theorem}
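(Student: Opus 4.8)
The plan is to turn the mild formulation (\ref{mild}) of Lemma \ref{lemMild} into a Gronwall inequality for the distance $|||\cdot|||_{\mathcal{LB}}$ between two solutions, using the three stability estimates already at hand: the uniform Lipschitz preservation of the semigroup (Lemma \ref{lemContrz}), the stability of the decoupled coefficients in the solution (Lemma \ref{lemLip}), and the stability of the semigroup in the solution (Lemma \ref{lemContr}). Existence being granted by Theorem \ref{exist}, only uniqueness is needed. So I would take two solutions $\nu^1,\nu^2\in\iC([0,T],\etr)$ of (\ref{determ}) with the same initial datum $\nu_0$ and with $\sup_{0\leq t\leq T}\langle\nu^i_t,1\rangle<\infty$; by the decoupling proposition the resources $R^{\nu^1},R^{\nu^2}$ are then determined by $\nu^1,\nu^2$. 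Setting $m(t)=|||\nu^1_t-\nu^2_t|||_{\mathcal{LB}}$, I note that $m$ is bounded on $[0,T]$, since $m(t)\leq\langle\nu^1_t,1\rangle+\langle\nu^2_t,1\rangle$, and that $m(0)=0$.

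Fix $\phi\in\mathcal{LB}(\R_+,\R)$ with $\|\phi\|_{\mathcal{LB}}\leq 1$ and $t\leq T$, and abbreviate $\Psi^i_s=b[\nu^i](\cdot,s)\,\iG[P^{\nu^i}_{s,t-s}\phi]-d\,P^{\nu^i}_{s,t-s}\phi$. Subtracting the two instances of (\ref{mild}) and rearranging, I would use the decomposition
\[
\langle\nu^1_t-\nu^2_t,\phi\rangle=\big\langle\nu_0,(P^{\nu^1}_{0,t}-P^{\nu^2}_{0,t})\phi\big\rangle+\int_0^t\langle\nu^1_s-\nu^2_s,\Psi^1_s\rangle\,ds+\int_0^t\langle\nu^2_s,\Psi^1_s-\Psi^2_s\rangle\,ds .
\]
For the first term, Lemma \ref{lemContr} gives $\|(P^{\nu^1}_{0,t}-P^{\nu^2}_{0,t})\phi\|_{\infty}\leq C\int_0^t m(u)\,du$, hence a bound $C\langle\nu_0,1\rangle\int_0^t m$. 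For the \emph{self term} $\langle\nu^1_s-\nu^2_s,\Psi^1_s\rangle$, I would check that $\Psi^1_s\in\mathcal{LB}$ with $\|\Psi^1_s\|_{\mathcal{LB}}\leq C$ uniformly in $s,t$: indeed $P^{\nu^1}_{s,t-s}\phi$ is bounded by $\|\phi\|_{\infty}$ and, by Lemma \ref{lemContrz}, Lipschitz with constant $C\|\phi\|_{\textrm{Lip}}$, and both $\iG$ and multiplication by the bounded Lipschitz coefficients $b[\nu^1](\cdot,s)$ and $d$ preserve boundedness and the Lipschitz property (the rescalings $x\mapsto\alpha x$ in $\iG$ satisfy $\alpha<1$); so $\langle\nu^1_s-\nu^2_s,\Psi^1_s\rangle\leq Cm(s)$. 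For the \emph{coefficient term} $\langle\nu^2_s,\Psi^1_s-\Psi^2_s\rangle$, I would expand $\Psi^1_s-\Psi^2_s$ into the difference of the $b$-coefficients (bounded by $C\int_0^t m$ via Lemma \ref{lemLip}, with no contribution from $d$ since it is resource-independent) and the semigroup differences $\iG[(P^{\nu^1}_{s,t-s}-P^{\nu^2}_{s,t-s})\phi]$ and $d\,(P^{\nu^1}_{s,t-s}-P^{\nu^2}_{s,t-s})\phi$ (both bounded by $C\int_0^t m$ via Lemma \ref{lemContr}), which gives $\|\Psi^1_s-\Psi^2_s\|_{\infty}\leq C\int_0^t m$ and hence a bound $C\sup_s\langle\nu^2_s,1\rangle\int_0^t m$.

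Taking the supremum over $\|\phi\|_{\mathcal{LB}}\leq 1$ and collecting the three contributions, the bookkeeping (using $t\leq T$ to absorb the double time integral coming from the coefficient term) yields
\[
m(t)\leq C\int_0^t m(u)\,du+C\int_0^t m(s)\,ds+C\,t\int_0^t m(u)\,du\leq C_T\int_0^t m(u)\,du ,\qquad t\in[0,T].
\]
Since $m$ is bounded and $m(0)=0$, Gronwall's lemma forces $m\equiv 0$ on $[0,T]$, i.e. $\nu^1=\nu^2$, and then $R^{\nu^1}=R^{\nu^2}$ by the decoupling proposition, which is uniqueness. The step I expect to be the crux is the self term: one must verify that $\Psi^1_s$ really belongs to $\mathcal{LB}$ with norm controlled independently of $s$ and $t$, so that it is an admissible test function for $|||\cdot|||_{\mathcal{LB}}$. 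This is precisely what Lemma \ref{lemContrz} secures — and it is the reason the uniqueness class is built on $\mathcal{LB}$ rather than on bounded test functions — and it relies essentially on $D$ being independent of the resource here, which is what makes the Tanaka argument of Lemma \ref{lemContr} (and hence the whole scheme) go through in this simpler case.
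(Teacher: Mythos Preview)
Your proof is correct and follows essentially the same route as the paper: the same mild-formulation decomposition (your three terms are a repackaging of the paper's four), the same use of Lemmas \ref{lemContrz}, \ref{lemLip}, \ref{lemContr} to control respectively the self term, the coefficient differences, and the semigroup differences, and the same Gronwall conclusion. Your identification of the crux --- that $\Psi^1_s\in\mathcal{LB}$ with uniformly bounded norm via Lemma \ref{lemContrz}, and that the whole scheme hinges on $D$ being resource-independent so that Lemma \ref{lemContr} holds --- matches the paper's reasoning exactly.
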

            \begin{proof}
                The existence of the solution and the bound comes from Theorem \ref{exist} here above. It then suffices to show that the solution is unique. For that purpose, let us assume that there exist two solutions denoted $(\nu^1,R^1)$ and $(\nu^2,R^2)$ in $\iC([0,T],\etr)$.
                Then it follows from (\ref{mild}) that for all test function $\phi\in\mathcal{LB}(\R_+,\R)$,
                \begin{equation}\label{diffs}\begin{array}{l}
                    \displaystyle \left\langle\nu^1_t-\nu^2_t,\phi\right\rangle = \left\langle\nu_0,\left(P^{\nu^1}_{0,t}-P^{\nu^2}_{0,t}\right)\phi\right\rangle \vspace{0.20cm}
                    \\
                    \displaystyle \qquad +~ \int_0^t\int_0^{\infty}\left\{b[\nu^1](x,s)\iG[P^{\nu^1}_{s,t-s}\phi](x) - d(x) P^{\nu^1}_{s,t-s}\phi(x) \right\}(\nu^1_s-\nu^2_s)(dx)ds \vspace{0.20cm}
                    \\
                    \displaystyle \qquad +~ \int_0^t\int_0^{\infty}\left\{ b[\nu^1](x,s)\iG[\left(P^{\nu^1}_{s,t-s}-P^{\nu^2}_{s,t-s}\right)\phi](x) - d(x)\left(P^{\nu^1}_{s,t-s}-P^{\nu^2}_{s,t-s}\right)\phi(x) \right\}\nu^2_s(dx)ds \vspace{0.20cm}
                    \\
                    \displaystyle \qquad +~ \int_0^t\int_0^{\infty}\iG[P^{\nu^2}_{s,t-s}\phi](x)\bigg\{ b[\nu^1](x,s) - b[\nu^2](x,s) \bigg\}\nu^2_s(dx)ds.
                \end{array}
                \end{equation}
                Furthermore, the operator $\iG$ is consistent with the linear space $\mathcal{LB}(\R_+,\R)$. Indeed, for any test function $f\in\mathcal{LB}(\R_+,\R)$ we have
                \[ |\iG[f](x)| \leq |f(x)| + 2\int_0^1|f(\alpha x)|M(d\alpha) \leq 3\|f\|_{\infty}, \forall x\geq 0 \]
                and for all $x,y\geq 0$,
                \begin{displaymath}
                    \begin{array}{l}
                        \displaystyle \big|\iG[f](x) - \iG[f](y)\big| = \left| f(x) - f(y) + 2\int_0^1\left[ f(\alpha x)-f(\alpha y) \right]M(d\alpha) \right| \vspace{0.20cm}
                        \\
                        \displaystyle \hspace{2.95cm} \leq \left| f(x) - f(y)\right| + 2\int_0^1\left| f(\alpha x)-f(\alpha y)\right| M(d\alpha) \vspace{0.20cm}
                        \\
                        \displaystyle \hspace{2.95cm} \leq 2\|f\|_{\textrm{Lip}}|x-y|.
                    \end{array}
                \end{displaymath}
                Hence, we can deduce from Lemma \ref{lemContrz} that for each $s\leq t$ the function
                \[ \R_+\ni x\mapsto b[\nu^1](x,s)\iG[P^{\nu^1}_{s,t-s}\phi](x) - d(x)P^{\nu^1}_{s,t-s}\phi(x) \]
                is in $\mathcal{LB}(\R_+,\R)$ so that
                \begin{displaymath}
                    \begin{array}{l}
                        \displaystyle \left\| b[\nu^1](\cdot,s)\iG[P^{\nu^1}_{s,t-s}\phi] - d(\cdot)P^{\nu^1}_{s,t-s}\phi \right\|_{\mathcal{LB}} = \left\| b[\nu^1](\cdot,s)\iG[P^{\nu^1}_{s,t-s}\phi] - d(\cdot)P^{\nu^1}_{s,t-s}\phi \right\|_{\infty} \vspace{0.20cm}
                        \\
                        \displaystyle \hspace{7cm} +~ \left\| b[\nu^1](\cdot,s)\iG[P^{\nu^1}_{s,t-s}\phi] - d(\cdot)P^{\nu^1}_{s,t-s}\phi \right\|_{\textrm{Lip}} \vspace{0.20cm}
                        \\
                        \displaystyle \hspace{6.4cm} \leq (3\|b\|_{\infty} + \|d\|_{\infty})\|\phi\|_{\infty} + 2\|b\|_{\infty}\|P^{\nu^1}_{s,t-s}\phi\|_{\textrm{Lip}} + 3\|\phi\|_{\infty}\|b\|_{\textrm{Lip}} \vspace{0.20cm}
                        \\
                        \displaystyle \hspace{6.4cm} \leq (3\|b\|_{\infty} + \|d\|_{\infty} + 3\|b\|_{\textrm{Lip}})\|\phi\|_{\infty} + 2\|b\|_{\infty}C\|\phi\|_{\textrm{Lip}}
                    \end{array}
                \end{displaymath}
                and then
                \[ \left\| b[\nu^1](\cdot,s)\iG[P^{\nu^1}_{s,t-s}\phi] - d(\cdot)P^{\nu^1}_{s,t-s}\phi \right\|_{\mathcal{LB}} \leq C \|\phi\|_{\mathcal{LB}}, \forall s\leq t. \]
                With equation (\ref{diffs}) and Lemmas \ref{lemContr}, \ref{lemLip}, that implies that
                \begin{displaymath}
                    \left\langle\nu^1_t-\nu^2_t,\phi\right\rangle \leq C \|\phi\|_{\mathcal{LB}}\left(1 + \sup_{0\leq s\leq T}\langle\nu^2_u,1\rangle\right) \int_0^t |||\nu^1_s-\nu^2_s|||_{\mathcal{LB}}\,ds.
                \end{displaymath}
                It follows that for all $t\in [0,T]$
                \begin{displaymath}
                    |||\nu^1_t-\nu^2_t|||_{\mathcal{LB}} = \sup_{\phi\in\mathcal{LB}(\R_+,\R); \|\phi\|_{\mathcal{LB}}\leq 1}\left\langle\nu^1_t-\nu^2_t,\phi\right\rangle \leq C \left(1 + \sup_{0\leq s\leq T}\langle\nu^2_u,1\rangle\right) \int_0^t |||\nu^1_s-\nu^2_s|||_{\mathcal{LB}}\,ds
                \end{displaymath}
                and by Gronwall's lemma we deduce that $\nu^1_t = \nu^2_t$ for all $t\in[0,T]$. We conclude that $(\nu^1,R^1) = (\nu^2,R^2)$.
            \end{proof}
            We saw that for this method, the control on the difference $P^{\nu^1}_{s,t-s} - P^{\nu^2}_{s,t-s}$ given in Lemma \ref{lemContr} is very important. However, in the general situation where the diffusion coefficient of the individual trait dynamics depends on the resource we are not able to get such an inequality. In the following section, we suggest another method for this general case that requires more regularity on the coefficients.
     
        \subsection{The general case of uniqueness}
            In this section, we show a more complicated uniqueness result for (\ref{determ}) in the general case where the diffusion coefficient depends on the resource dynamics. As in the previous section, the method consists in building a solution of an underlying parabolic partial differential equation in order to remove the derivatives of the test function. We need this function to be regular enough. Let us then introduce the new distance on $\iM_F(\R_+)$ defined by
            \begin{equation}
                |||\mu^1 - \mu^2||| = \sup_{\phi\in\iC^2_b(\R_+,\R);\|\phi\|\leq 1}\left\langle\mu^1-\mu^2,\phi\right\rangle, \forall \mu^1,\mu^2\in\iM_F(\R_+)
            \end{equation}
            where
            \begin{equation}
                \|\phi\| = \|\phi\|_{\infty} + \|\phi'\|_{\infty} + \|\phi''\|_{\infty}, \forall \phi\in\iC^2_b(\R_+,\R).
            \end{equation}
            We consider the additional assumptions.
            \begin{assume}\label{assder}
                    \item[\quad\textsc{(A.1)}] $\zeta\in\iC^2(\R_+\times[0,\bar{R}],\R)$ is such that $\|\partial_x\zeta\|_{\infty} + \|\partial_r\zeta\|_{\infty} + \|\partial^2_x\zeta\|_{\infty} < +\infty$,
                    
                    \item[\quad\textsc{(A.2)}] $D\in\iC^2(\R_+\times[0,\bar{R}],\R)$ is such that $\|\partial_xD\|_{\infty} + \|\partial_rD\|_{\infty} + \|\partial^2_xD\|_{\infty} < +\infty$,
                    
                    \item[\quad\textsc{(A.3)}] $b\in\iC^{2,1}(\R_+\times[0,\bar{R}],\R)$ is such that $\|\partial_xb\|_{\infty} + \|\partial_rb\|_{\infty} + \|\partial^2_xb\|_{\infty} < +\infty$,
                    
                    \item[\quad\textsc{(A.4)}] $\chi\in\iC^{2,1}(\R_+\times[0,\bar{R}],\R)$ is such that $\|\partial_x\chi\|_{\infty} + \|\partial_r\chi\|_{\infty} + \|\partial^2_x\chi\|_{\infty} < +\infty$,
                    
                    \item[\quad\textsc{(A.5)}] $d\in\iC^2(\R_+,\R)$ is such that $\|d'\|_{\infty} + \|d''\|_{\infty}  < +\infty$,
                    
                    \item[\quad\textsc{(A.6)}] $D(x,r) >0$ for all $(x,r)\in(0,\infty)\times(0,\bar{R}]$.
            \end{assume}
            \noindent Let us start with an important result that states the regularity and some bounds of the semigroup of the stochastic individual trait dynamics.
            \begin{theorem}\label{theoPDE}
                Under \textsl{(A.1), (A.2), (A.6)} in Assumption \ref{assder}, for given $t\in(0,T]$ and a test function $\phi\in\iC^2_b(\R_+,\R)$, the parabolic problem
                \begin{equation}\label{EDP}
                    \left\{\begin{array}{l}
                        \displaystyle \partial_sf_s(x) + \zeta[\nu](x,s)\partial_xf_s(x) + D[\nu](x,s)\partial_x^2f_s(x) = 0, x>0, s\in [0,t) \vspace{0.20cm}
                        \\
                        \displaystyle f_{\big|s=t} = \phi \textrm{ on }\R_+
                    \end{array}\right.
                \end{equation}
                admits a unique solution in the subspace of $\iC([0,t]\times\R_+,\R)\cap\iC^{1,2}([0,t)\times(0,\infty),\R)$ constituted of functions $f$ that satisfy
                \begin{equation}\label{contrfg}
                    \exists C,p >0, |\partial_xf_s(x)| + |\partial_x^2f_s(x)| \leq C\big(1+x^{p}\big), \forall s< t, \forall x>0.
                \end{equation}
                In addition, this solution belongs to $\iC^{1,1}([0,t)\times\R_+,\R)$, is given by
                \begin{equation}\label{exprf}
                    f^{\phi}_s(x) = \E\left[ \phi(X^{\nu,x}_{s,t}) \right], \forall (s,x)\in [0,t]\times\R_+
                \end{equation}
                and satisfies the property
                \begin{equation}\label{contrf}
                    |f^{\phi}_s(x)| + |\partial_xf^{\phi}_s(x)| + |\partial_x^2f^{\phi}_s(x)| \leq C_T\|\phi\|, \forall s<t, \forall x>0.
                \end{equation}
            \end{theorem}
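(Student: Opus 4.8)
The plan is to identify the solution explicitly as the probabilistic representation (\ref{exprf}), to obtain its regularity and the bound (\ref{contrf}) from the differentiation of the stochastic flow of (\ref{edsf}), and to derive uniqueness from a Feynman--Kac verification argument; I would begin with the latter, since it simultaneously pins down the representation formula. Let $f$ be any solution in the stated class and fix $(s,x)\in[0,t)\times(0,\infty)$. Applying It\^o's formula to $u\mapsto f_u(X^{\nu,x}_{s,u})$ on $[s,t-\varepsilon]$ is licit because the trajectory stays in $(0,\infty)$ (Lemma \ref{appenBf}), where $f\in\iC^{1,2}$; the finite-variation part is $\partial_sf_u+L^{\nu}_uf_u=0$, so only the stochastic integral $\int\sqrt{2D[\nu]}\,\partial_xf_u(X^{\nu,x}_{s,\cdot})\,dW$ survives. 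Since $D[\nu]$ grows at most linearly (as $D(0,\cdot)=0$ with $\partial_xD$ bounded) and $\partial_xf_u$ has polynomial growth by (\ref{contrfg}), the moment control (\ref{moment}) makes this integral a true martingale, whence $\E[f_{t-\varepsilon}(X^{\nu,x}_{s,t-\varepsilon})]=f_s(x)$. Letting $\varepsilon\to0$ and using continuity up to the terminal time together with $f_{|s=t}=\phi$ gives $f_s(x)=\E[\phi(X^{\nu,x}_{s,t})]$, which proves uniqueness and establishes (\ref{exprf}).

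For existence, regularity and the bound (\ref{contrf}), I would study $f^{\phi}_s(x)=\E[\phi(X^{\nu,x}_{s,t})]$ through the flow $x\mapsto X^{\nu,x}_{s,u}$. Writing $\sigma[\nu]=\sqrt{2D[\nu]}$, the first and second variations $\eta_{s,u}=\partial_xX^{\nu,x}_{s,u}$ and $\xi_{s,u}=\partial_x^2X^{\nu,x}_{s,u}$ solve the linear SDEs obtained by differentiating (\ref{edsf}), with multiplicative coefficients $\partial_x\zeta[\nu],\partial_x\sigma[\nu]$ and inhomogeneities in $\partial_x^2\zeta[\nu],\partial_x^2\sigma[\nu]$. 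Granting their existence with finite moments, dominated convergence and $\phi\in\iC^2_b$ yield $\partial_xf^{\phi}_s(x)=\E[\phi'(X^{\nu,x}_{s,t})\eta_{s,t}]$ and $\partial_x^2f^{\phi}_s(x)=\E[\phi''(X^{\nu,x}_{s,t})\eta_{s,t}^2+\phi'(X^{\nu,x}_{s,t})\xi_{s,t}]$; combined with uniform $L^p$ bounds on $\eta,\xi$ coming from the bounded derivatives in (A.1)--(A.2) and Gronwall's lemma, this gives the estimate (\ref{contrf}) with constant $C_T\|\phi\|$. The Markov property of the flow then shows that $f^{\phi}$ solves (\ref{EDP}); interior $\iC^{1,2}([0,t)\times(0,\infty))$ regularity follows either from continuity of the variation processes or from local parabolic (Schauder) estimates, the operator being uniformly elliptic on compact subsets of $(0,\infty)$ by (A.6), and continuity of $\partial_xf^{\phi}$ and $\partial_sf^{\phi}$ up to $\{x=0\}$ yields the $\iC^{1,1}([0,t)\times\R_+,\R)$ membership.

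The real difficulty is justifying the flow differentiation and, above all, the moment bounds on $\eta,\xi$ near the degeneracy point. Because $D(0,r)=0$, the diffusion coefficient $\sigma[\nu]=\sqrt{2D[\nu]}$ is only $1/2$-H\"older at $x=0$ and $\partial_x\sigma[\nu]=\partial_xD[\nu]/\sqrt{2D[\nu]}$ is singular there, so the classical Lipschitz stochastic-flow theory does not apply up to the boundary. I would localize by the exit times from $[1/n,n]$, where the coefficients are smooth and the standard flow theory gives differentiable stopped flows, and then pass to the limit $n\to\infty$ with bounds uniform in $n$. Since $\eta$ is the exponential of $\int\partial_x\zeta[\nu]\,dv-\tfrac12\int(\partial_x\sigma[\nu])^2\,dv+\int\partial_x\sigma[\nu]\,dW$, the whole question reduces to controlling the occupation integral $\int_s^{u}\big((\partial_xD[\nu])^2/D[\nu]\big)(X^{\nu,x}_{s,v},v)\,dv$, whose integrand behaves like $1/X$ near the boundary. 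Its finiteness, with the required exponential moments, is exactly where the structural conditions $\zeta(0,r)>0$ and $D(0,r)=0$ enter, forcing the drift to dominate the diffusion at $0$ and keeping the occupation time of a neighbourhood of $0$ small enough. This boundary analysis, not the PDE verification, is the crux of the proof.
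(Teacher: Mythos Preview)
Your uniqueness argument has a gap: Lemma \ref{appenBf} only gives non-negativity of the trajectory, not strict positivity. Under (A.1), (A.2), (A.6) of Assumption \ref{assder} alone, the Feller condition $\zeta(0,r)\geq \partial_xD(0,r)$ is \emph{not} assumed, so $0$ may be attainable and you cannot apply It\^o to $f$ directly on a domain where it is only $\iC^{1,2}$ for $x>0$. The paper handles this by mollifying in $x$: setting $f^n_s(x)=\int f_s(x+y)\rho_n(y)\,dy$ yields a function smooth on all of $\R_+$ with $|\partial_sf^n_s+L^{\nu}_sf^n_s|\leq Cn^{-1}(1+x^p)$, to which It\^o applies globally, and then $n\to\infty$ gives (\ref{exprf}).

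The existence argument has a more serious gap, which you yourself flag but do not close. Your flow-derivative $\eta_{s,u}$ is the stochastic exponential of $\int\partial_x\sigma[\nu]\,dW$, so an $L^p$ bound requires exponential moments of $\int_s^t(\partial_xD[\nu])^2/D[\nu](X_v,v)\,dv$, an integrand that behaves like $1/X$ near $0$. Under the present hypotheses there is no reason for these to be finite uniformly in $(s,x)$, and the second variation $\xi$ involves $\partial_x^2\sigma$, which is even more singular. The paper avoids this difficulty altogether by differentiating the \emph{equation} rather than the flow: if $g=\partial_xf$, then $g$ solves
\[
\partial_sg + h\,\partial_xg + D[\nu]\,\partial_x^2g = -(\partial_x\zeta[\nu])\,g,\qquad h=\zeta[\nu]+\partial_xD[\nu],
\]
whose drift $h$ is Lipschitz in $x$ (this is where (A.1)--(A.2) are used). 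One then \emph{defines} $g^{\phi}_s(x)=\E_{x,s}[\phi'(Y_t)\exp(\int_s^t\partial_x\zeta[\nu](Y_u,u)\,du)]$ for the auxiliary diffusion $dY=h\,du+\sqrt{2D[\nu]}\,dW$; no flow derivative is taken. The Lipschitz bound on $g^{\phi}$ in $x$ --- which gives the bound on $\partial_x^2f^{\phi}$ --- comes from the $L^1$ stability of $Y$ in its initial condition, obtained via the Tanaka formula and the zero-local-time argument (Lemma \ref{appenBf}), which only needs $D$ (not $\sqrt{D}$) Lipschitz. Classical regularity of $g^{\phi}$ is then read off on each $[\varepsilon_n,m_n]$ from uniform ellipticity (A.6), and $f^{\phi}$ is recovered by integrating $g^{\phi}$ in $x$ and identified with (\ref{exprf}) via the uniqueness step. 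This trick --- passing to the derivative PDE so that the singular factor $\partial_x\sqrt{D}$ never appears --- is the key idea you are missing.
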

            
            \begin{proof}
                We split this proof into the following steps :
                \paragraph{Step 1: Uniqueness and characterization of the solution.} Let us consider that the system (\ref{EDP}) admits a solution $(s,x)\mapsto f_s(x)$ in $\iC([0,t]\times\R_+,\R)\cap\iC^{1,2}([0,t)\times(0,\infty),\R)$ that satisfies the polynomial bound (\ref{contrfg}). Since this solution is not necessarily differentiable on the straight line $x=0$, we consider the regularising sequence defined for each integer $n\geq 1$ by
                \[ \rho_n(x) = n\rho(nx),\forall x\geq 0 \]
                with $\rho$ such that
                \[ \rho\in\iC^{\infty}_c(\R_+,\R), \rho\geq 0\, ,\, \textrm{Supp}(\rho)\subseteq [ 1,2 ]\, \textrm{ and }\, \int_0^{\infty}\rho(y)dy = 1 , \]
                and we introduce the sequence of functions defined by
                \begin{equation}\label{regf}
                    f^{n}_s(x) = \int_0^{\infty}f_s(x+y)\rho_n(y)dy\, ,\, \forall s\in[0,t],\forall x\in\R_+.
                \end{equation}
                Then for each $n\geq 1$ we have $f^{n}\in\iC([0,t]\times\R_+,\R)\cap\iC^{1,\infty}([0,t)\times\R_+,\R)$ that converges pointwise towards $f$ on $[0,t]\times\R_+$ as $n\rightarrow\infty$. In addition, it satisfies on $[0,t)\times(0,\infty)$
                \begin{displaymath}
                    \begin{array}{l}
                        \displaystyle \left| \partial_sf^n_s(x) + L^{\nu}_sf^n_s(x) \right| = \left| \int_0^{\infty}\bigg[ \partial_sf_s(x + y) + L^{\nu}_s\big(f_s(\cdot+y)\big)(x) \bigg]\rho_n(y)dy \right| \vspace{0.20cm}
                        \\
                        \displaystyle \hspace{3.45cm} =\, \bigg| \int_0^{\infty}\bigg[ \big( \zeta[\nu](x,s) - \zeta[\nu](x+y,s) \big)\partial_xf(x+y) \vspace{0.20cm} 
                        \\
                        \displaystyle \hspace{5.5cm} +\, \big( D[\nu](x,s) - D[\nu](x+y,s) \big)\partial_x^2f(x+y) \bigg]\rho_n(y)dy \bigg| \vspace{0.20cm}
                        \\
                        \displaystyle \hspace{3.45cm} \leq \, C\int_0^{\infty} y\bigg( \big|\partial_xf_s(x+y)\big| + \big| \partial_x^2f_s(x+y) \big| \bigg)\rho_n(y)dy
                    \end{array}
                \end{displaymath}
                and then,
                \begin{equation}\label{contrfnL}
                    \left| \partial_sf^n_s(x) + L^{\nu}_sf^n_s(x) \right| \leq \frac{C}{n}\int_0^{\infty}y\bigg( \big|\partial_xf_s(x+\frac{y}{n})\big| + \big| \partial_x^2f_s(x+\frac{y}{n}) \big| \bigg)\rho(y)dy.
                \end{equation}
                In particular, it follows from the bound (\ref{contrfg}) that
                \begin{equation}\label{contrfgn}
                    \left|\partial_{s}f^{n}_s(x) + L^{\nu}_sf^{n}_s(x)\right| \leq \frac{C}{n} (1+x^{p})\, , \, \forall s< t\, ,\, \forall x\geq 0.
                \end{equation}
                Further, the It\^o formula gives with (\ref{moment})
                \[ \E\left[ f^{n}_t(X^x_{s,t}) \right] = f^{n}_s(x) + \E_{x,s}\int_s^t\bigg[ \partial_{u}f^{n}_u(X^x_{s,u}) + L^{\nu}_sf^{n}_u(X^x_{s,u}) \bigg]du \]
                that implies with (\ref{contrfnL}) that
                \[ \left| \E\left[ f^{n}_t(X^x_{s,t}) \right] - f^{n}_s(x) \right| \leq \frac{C}{n}\int_s^t\left( 1 + \E\left[ (X^x_{s,u})^p \right] \right)du. \]
                The Feynman-Kac characterization (\ref{exprf}) follows as $n\rightarrow\infty$. Such a solution $f$ is then unique and it only suffices now to show that it exists.
                
                \paragraph{Step 2: Construction of a regular solution.} A bound property for the second derivative in $x$ of a solution is difficult to obtain directly from the characterization in equation (\ref{exprf}) because the diffusion coefficient of the stochastic differential equation (\ref{edsf}) is not differentiable. Here we suggest another method. We start with the equation that would be satisfied by the first derivative in $x$ of the solution we are aiming to construct. Let us then introduce the new system
                \begin{equation}\label{edpd}\left\{\begin{array}{l}
                    \displaystyle \partial_sg_s(x) + h(x,s)\partial_xg_s(x) + D[\nu](x,s)\partial_x^2g_s(x) = -\partial_x\zeta[\nu](x,s)g_s(x),\,x> 0,s\in [0,t ) \vspace{0.20cm}
                    \\
                    \displaystyle g_{\big| s=t} = \phi' \,\textrm{ on }\R_+
                \end{array}\right.
                \end{equation}
                where
                \begin{equation}
                    h(x,s) = \zeta[\nu](x,s) + \partial_xD[\nu](x,s)\, ,\, \forall (s,x)\in[0,t]\times\R_+
                \end{equation}
                is differentiable, Lipschitz continuous in $x$ uniformly in $s$ thanks to \textsl{(A.1), (A.2)} in Assumption \ref{assder}, and satisfies $h(0,s) > 0$ for all $s\in[0,t]$. We also introduce the function defined by
                \begin{equation}\label{exprg}
                    g^{\phi}_s(x) = \E_{x,s}\left[ \phi'(Y_{t})e^{\int_s^t\partial_x\zeta[\nu](Y_{u},u)du} \right], \forall x\geq 0,\forall s\leq t
                \end{equation}
                where 
                \begin{equation}\label{edsd}
                    dY_u = h(Y_{u},u)du + \sqrt{2D[\nu]\left(Y_{u},u\right)}\, dW_u\, ,\, \forall u\leq t.
                \end{equation}
                Equation (\ref{edsd}) admits a unique weak solution for any deterministic non-negative initial condition and initial time (see Lemma \ref{appenBf} in Appendix \ref{appB}). It is shown in the same appendix that there is strong existence, and then we can use the right continuous and completed canonical filtration of the given Brownian motion $W$ that we will denote $(\iF^{W}_t)_{t\in [0,T]}$. The notation $\E_{x,s}$ refers to the expectation conditionned by the initial time $s<t$ and the initial condition $Y_s=x\geq 0$. The function $(x,s)\mapsto g^{\phi}_s(x)$ is well defined because $\phi'$ and $\partial_x\zeta$ are bounded thanks to (A.1) in Assumption \ref{assder}. In addition, it is continuous on $[0,t]\times\R_+$ and satisfies for all $x,y\geq 0$ and $s_1,s_2\leq t$,
                \begin{equation}\label{contrgp}
                    \left| g^{\phi}_{s_1}(x)-g^{\phi}_{s_2}(y) \right| \leq  C_T \left(\|\phi'\|_{\infty} + \|\phi''\|_{\infty}\right)\left\{ |x-y| + (1+x)^{1/2}|s_1-s_2|^{1/2} + (1+x)|s_1-s_2| \right\}
                \end{equation}
                (see Lemma \ref{appenBs} in Appendix \ref{appB}). We construct a classical solution for (\ref{edpd}) by restricting the problem on increasingly greater intervals over which it is uniformly parabolic. Let us then introduce two monotonous sequences $(\varepsilon_n)_n,(m_n)_n$ of positive real numbers that converge towards $0$ and $\infty$ respectively, and satisfy $\varepsilon_0 < m_0$. It follows from the property (\ref{boundR}) and \textsl{(A.6)} in Assumption \ref{assder} that
                \[ \forall n\in\N\, ,\,\exists a_n>0\, ,\, D[\nu](x,s)\geq a_n\, ,\, \forall (x,s)\in [\varepsilon_n,m_n]\times [0,t]. \]
                For each $n$, we consider the restricted system
                \begin{equation}\label{edpr}\left\{\begin{array}{l}
                    \displaystyle \partial_sg_s(x) + h(x,s)\partial_xg_s(x) + D[\nu](x,s)\partial_x^2g_s(x) = -\partial_x\zeta[\nu](x,s)g_s(x),\,(s,x)\in[0,t)\times(\varepsilon_n,m_n) \vspace{0.20cm}
                    \\
                    \displaystyle g_s(x) = g^{\phi}_s(x)\, ,\, (s,x)\in [0,t)\times\{\varepsilon_n,m_n\} \vspace{0.20cm}
                    \\
                    \displaystyle g_{\big| s=t} = \phi' \,\textrm{ on }[\varepsilon_n,m_n],
                \end{array}\right.
                \end{equation}
                that has Lipschitz continuous coefficients (see Lemma \ref{regdec}), and compatible continuous boundary and final values. It then admits a unique classical solution in $\iC([0,t]\times[\varepsilon_n,m_n],\R)\cap\iC^{1,2}([0,t)\times (\varepsilon_n,m_n),\R)$ thanks to \cite{fried75}{, Theorem 3.6 p138,} that is characterized by the Feynman-Kac formula
                \begin{equation}\label{solG}
                    g^n_s(x) = \E_{x,s}\left\{ \phi'(Y_t)e^{\int_s^t\partial_x\zeta[\nu](Y_u,u)du}1_{\{\tau_n=t\}} \right\} + \E_{x,s}\left\{ g^{\phi}_{\tau_n}(Y_{\tau_n})e^{\int_s^{\tau_n}\partial_x\zeta[\nu](Y_u,u)du}1_{\{\tau_n<t\}} \right\} 
                \end{equation}
                (see \cite{fried75}{, Theorem 5.2, p147}). The stopping time $\tau_n$ represents the minimum between the final time $t$ and the hitting time of the boundary $\{\varepsilon_n,m_n\}$ by the stochastic process $(Y_u)$. Further, it follows from (\ref{exprg}) that
                \begin{displaymath}
                    \begin{array}{l}
                        \displaystyle \E_{x,s}\left\{ g^{\phi}_{\tau_n}(Y_{\tau_n})e^{\int_s^{\tau_n}\partial_x\zeta[\nu](Y_u,u)du}1_{\{\tau_n<t\}} \right\} \vspace{0.20cm}
                        \\
                        \displaystyle \qquad\qquad\qquad\quad = \E_{x,s}\left\{ \E_{Y_{\tau_n},\tau_n}\left[ \phi'(Y_t)e^{\int_{\tau_n}^t\partial_x\zeta[\nu](Y_u,u)du}  \right]e^{\int_s^{\tau_n}\partial_x\zeta[\nu](Y_u,u)du}1_{\{\tau_n<t\}} \right\}
                    \end{array}
                \end{displaymath}
                and thanks to the strong Markov property,
                \begin{displaymath}
                    \begin{array}{l}
                        \displaystyle \E_{x,s}\left\{ g^{\phi}_{\tau_n}(Y_{\tau_n})e^{\int_s^{\tau_n}\partial_x\zeta[\nu](Y_u,u)du}1_{\{\tau_n<t\}} \right\} \vspace{0.20cm}
                        \\
                        \displaystyle \qquad\qquad\qquad = \E_{x,s}\left\{ \E_{x,s}\left[ \phi'(Y_t)e^{\int_{\tau_n}^t\partial_x\zeta[\nu](Y_u,u)du}\big|\iF^W_{\tau_n}  \right]e^{\int_s^{\tau_n}\partial_x\zeta[\nu](Y_u,u)du}1_{\{\tau_n<t\}} \right\} \vspace{0.20cm}
                        \\
                        \displaystyle \qquad\qquad\qquad = \E_{x,s}\left\{ \phi'(Y_t)e^{\int_{s}^t\partial_x\zeta[\nu](Y_u,u)du} 1_{\{\tau_n<t\}} \right\}.
                    \end{array}
                \end{displaymath}
                As $\tau_n\leq t$ a.s, that implies with (\ref{solG}) that $g^n = g^{\phi}$ on $[0,t]\times[\varepsilon_n,m_n]$. As $n$ becomes larger and larger, we deduce that $g^{\phi}\in\iC([0,t]\times\R_+,\R)\cap\iC^{1,2}([0,t)\times (0,\infty),\R)$ and is a classical solution of (\ref{edpd}). Furthermore, it follows from its definition that
                \begin{equation}\label{norx}
                    |g^{\phi}_s(x)|\leq \|\phi'\|_{\infty}e^{\|\partial_x\zeta\|_{\infty}(t-s)}\, ,\, \forall (s,x)\in [0,t]\times\R_+,
                \end{equation}
                and the inequality (\ref{contrgp}) implies that for all $x>0$ and $s\in [0,t)$,
                \begin{equation}\label{difx}
                    \left|\partial_xg^{\phi}_s(x)\right| = \lim_{y\rightarrow x}\left|\frac{g^{\phi}_s(y)-g^{\phi}_s(x)}{y-x} \right| \leq C_T \left(\|\phi'\|_{\infty} + \|\phi''\|_{\infty} \right).
                \end{equation}
                Then let us introduce for a given $\theta>0$ the function defined for all $(s,x)\in[0,t)\times\R_+$  by
                \begin{equation}\label{deff}
                    f^{\theta,\phi}_s(x) = \phi(\theta) + \int_s^t\big\{ \zeta[\nu](\theta,u)g^{\phi}_u(\theta) + D[\nu](\theta,u)\partial_xg_u(\theta)\big\}du + \int_{\theta}^xg^{\phi}_s(y)dy , 
                \end{equation}
                that is in $\iC^{0,1}([0,t]\times\R_+,\R)\cap\iC^{1,3}([0,t)\times(0,\infty),\R)$ and satisfies 
                \[ \partial_xf^{\theta,\phi} = g^{\phi} \,\textrm{ and }\, f^{\theta,\phi}_{| s=t} = \phi.\]
                Then thanks to (\ref{norx}), (\ref{difx}) we have the bound
                \begin{equation}\label{norxg}
                    |\partial_xf^{\theta,\phi}_s(x)| + |\partial_x^2f^{\theta,\phi}_s(x)| \leq C_T\left( \|\phi'\|_{\infty} + \|\phi''\|_{\infty} \right), \forall x>0, \forall s<t
                \end{equation}
                that corresponds to the condition (\ref{contrfg}) with some $p > 0$. The parameter $\theta$ is chosen positive to ensure that it is possible to swap the integral and time derivative symbols in the last term of (\ref{deff}), given that we know nothing about $\partial_sg^{\phi}_s$ in the neighbourhood of $0$. Hence for all $(s,x)\in[0,t)\times(0,\infty)$,
                \begin{displaymath}
                    \begin{array}{l}
                        \displaystyle \partial_sf^{\theta,\phi}_s(x) = -\big\{\zeta[\nu](\theta,s)g^{\phi}_s(\theta) + D[\nu](\theta,s)\partial_xg_s(\theta)\big\} + \int_{\theta}^x\partial_sg^{\phi}_s(y)dy \vspace{0.20cm}
                        \\
                        \displaystyle \hspace{1.6cm} = -\big\{\zeta[\nu](\theta,s)g^{\phi}_s(\theta) + D[\nu](\theta,s)\partial_xg_s(\theta)\big\} \, - \vspace{0.20cm}
                        \\
                        \displaystyle \hspace{3cm} \int_{\theta}^x\left\{ h(y,s)\partial_yg_s^{\phi}(y) + D[\nu](y,s)\partial_y^2g^{\phi}_s(y) + \partial_y\zeta[\nu](y,s)g^{\phi}_s(y) \right\}dy \vspace{0.20cm}
                        \\
                        \displaystyle \hspace{1.6cm} = -\big\{\zeta[\nu](\theta,s)g^{\phi}_s(\theta) + D[\nu](\theta,s)\partial_xg_s(\theta)\big\} - \int_{\theta}^x\partial_y\left\{ \zeta [\nu]g^{\phi} + D[\nu]\partial_yg^{\phi}\right\}(y,s)dy \vspace{0.20cm}
                        \\
                        \displaystyle \hspace{1.6cm} = -\big\{ \zeta[\nu](x,s) g^{\phi}_s(x) + D[\nu](x,s)\partial_xg^{\phi}_s(x) \big\}.
                    \end{array}
                \end{displaymath}
                That implies that the function $(s,x)\mapsto \partial_sf^{\theta,\phi}_s(x)$ is a solution of (\ref{EDP}) and can be continuously extended on the straight line $x=0$ by setting
                \begin{equation}\label{dersb}
                    \partial_sf^{\theta,\phi}_s(0) = -\zeta[\nu](0,s)g^{\phi}_s(0)\, , \, \forall s\leq t.
                \end{equation}
                In addition, $(s,x)\mapsto f_s^{\theta,\phi}(x)$ belongs to $\iC([0,t]\times\R_+,\R)\cap\iC^{1,1}([0,t)\times\R_+,\R)\cap \iC^{1,2}([0,t)\times(0,\infty),\R)$ and satisfies the bound (\ref{contrfg}). Hence the \textbf{Step 1} here above holds and the Feynman-Kac characterization (\ref{exprf}) follows so that the solution is independent of the choice of $\theta$. Let us now denote it by $f^{\phi}_s(x)$, then 
                \begin{equation}
                    |f^{\phi}_s(x)|\leq \|\phi\|_{\infty}\, ,\, \forall (s,x)\in[0,t]\times\R_+
                \end{equation}
                and the bound (\ref{contrf}) follows thanks to the inequality (\ref{norxg}).
            \end{proof}
            \noindent We now state the following result whose proof is similar to the one of Lemma \ref{lemLip}.
            \begin{lemma}\label{lemLipg}
                Let $(x,r)\mapsto g(x,r)$ be a function from $\R_+\times [0,\bar{R}]$ to $\R$ that is Lipschitz continuous, and $\nu^1, \nu^2\in\iC([0,T],(\iM_F(\R_+),w))$ be two solutions of (\ref{maind}), then 
                \begin{equation}\label{ineqg}
                    \sup_{\substack{x\geq 0 \\ 0\leq s\leq t}}\left| g[\nu^1](x,s) - g[\nu^2](x,s) \right| \leq C\|g\|_{\mathrm{Lip}}\int_0^t|||\nu^1_u-\nu^2_u||| du, \forall t\in [0,T].
                \end{equation}
            \end{lemma}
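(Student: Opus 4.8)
The plan is to follow the proof of Lemma~\ref{lemLip} almost verbatim; the only genuine change is that the test function against which the signed measure $\nu^1_u-\nu^2_u$ is integrated must now be controlled in the $\iC^2_b$-norm rather than in the $\mathcal{LB}$-norm, so that the distance $|||\cdot|||$ can be invoked in place of $|||\cdot|||_{\mathcal{LB}}$. First I would apply the decoupling formula (\ref{decoup}): since $g$ is Lipschitz continuous and $g[\nu^i](x,s)=g(x,R^{\nu^i}_s)$, one obtains the pointwise bound
\begin{equation*}
    \big| g[\nu^1](x,s) - g[\nu^2](x,s) \big| = \big| g(x,R^{\nu^1}_s) - g(x,R^{\nu^2}_s) \big| \leq \|g\|_{\mathrm{Lip}}\,\big| R^{\nu^1}_s - R^{\nu^2}_s \big|,
\end{equation*}
so it suffices to bound $|R^{\nu^1}_s-R^{\nu^2}_s|$ by $C\int_0^t|||\nu^1_u-\nu^2_u|||\,du$ uniformly in $s\leq t$.

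Next I would expand the difference of the two resource trajectories from (\ref{eqR}) and split it, exactly as in Lemma~\ref{lemLip}, into three contributions: the dilution term $R^{\nu^2}_u-R^{\nu^1}_u$; the term $\int_0^\infty[\chi(x,R^{\nu^2}_u)-\chi(x,R^{\nu^1}_u)]\,\nu^1_u(dx)$, which is bounded by $\|\chi\|_{\mathrm{Lip}}\,\langle\nu^1_u,1\rangle\,|R^{\nu^1}_u-R^{\nu^2}_u|$; and the new term $\int_0^\infty\chi(x,R^{\nu^2}_u)\,(\nu^2_u-\nu^1_u)(dx)$.

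The control of this last term is the one point that differs from Lemma~\ref{lemLip}, and I expect it to be the only substantive step. Here I would observe that under Assumption~\ref{assf} together with \textsl{(A.4)} of Assumption~\ref{assder}, the map $x\mapsto\chi(x,r)$ lies in $\iC^2_b(\R_+,\R)$ with a norm bounded uniformly in $r\in[0,\bar{R}]$, namely
\begin{equation*}
    \|\chi(\cdot,r)\| = \|\chi(\cdot,r)\|_\infty + \|\partial_x\chi(\cdot,r)\|_\infty + \|\partial^2_x\chi(\cdot,r)\|_\infty \leq \|\chi\|_\infty + \|\partial_x\chi\|_\infty + \|\partial^2_x\chi\|_\infty < +\infty.
\end{equation*}
Hence the definition of $|||\cdot|||$ yields directly $\big|\int_0^\infty\chi(x,R^{\nu^2}_u)(\nu^2_u-\nu^1_u)(dx)\big|\leq\|\chi(\cdot,R^{\nu^2}_u)\|\,|||\nu^1_u-\nu^2_u|||$, the analogue of the $\mathcal{LB}$-estimate used in Lemma~\ref{lemLip}. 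This is precisely where the extra smoothness of Assumption~\ref{assder} enters: boundedness and Lipschitz continuity of $\chi$ sufficed for the richer $\mathcal{LB}$ test-function class, whereas the coarser $\iC^2_b$ class forces us to require bounded first and second $x$-derivatives of $\chi$.

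Finally I would collect the three bounds into an estimate of the form
\begin{equation*}
    \big| R^{\nu^1}_s-R^{\nu^2}_s \big| \leq \Big(1+\|\chi\|_{\mathrm{Lip}}\sup_{0\leq z\leq T}\langle\nu^1_z,1\rangle\Big)\int_0^s\big|R^{\nu^1}_u-R^{\nu^2}_u\big|\,du + C\int_0^t|||\nu^1_u-\nu^2_u|||\,du,
\end{equation*}
apply Gronwall's lemma to absorb the first integral, and inject the resulting bound on $|R^{\nu^1}_s-R^{\nu^2}_s|$ into the pointwise inequality of the first step. Taking the supremum over $x\geq 0$ and $s\leq t$ then gives (\ref{ineqg}); these last manipulations are routine and identical to those of Lemma~\ref{lemLip}.
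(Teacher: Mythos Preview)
Your proposal is correct and follows exactly the approach the paper indicates (the paper simply states that the proof is similar to that of Lemma~\ref{lemLip} and omits the details). You have correctly identified the single substantive modification---replacing the $\mathcal{LB}$-control of $x\mapsto\chi(x,R^{\nu^2}_u)$ by a $\iC^2_b$-control via \textsl{(A.4)} of Assumption~\ref{assder}---and the remaining steps are indeed identical to those of Lemma~\ref{lemLip}.
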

            \noindent The uniqueness result can then be stated.
            \begin{theorem}\label{uniqtwo}
                Under Assumptions \ref{assf}, \ref{assK} and \ref{assder}, the system (\ref{determ}) admits a unique solution in $\iC([0,T],\etr)$ that satisfies $\sup_{0\leq t\leq T}\langle\nu_t,1\rangle < +\infty$.
            \end{theorem}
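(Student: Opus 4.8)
The plan is to follow the architecture of the proof of Theorem \ref{uniqone}, but to replace the control on the difference of semigroups (Lemma \ref{lemContr}, unavailable here because the local-time argument breaks down once $D$ depends on $r$) by the parabolic regularity of Theorem \ref{theoPDE}. Existence and the mass bound $\sup_{0\leq t\leq T}\langle\nu_t,1\rangle<\infty$ come from Theorem \ref{exist}, so only uniqueness remains. Suppose $(\nu^1,R^1)$ and $(\nu^2,R^2)$ are two solutions in $\iC([0,T],\etr)$ with the common initial datum $(\nu_0,R_0)$. Fix $t\in(0,T]$ and $\phi\in\iC^2_b(\R_+,\R)$ with $\|\phi\|\leq 1$, and let $f^{\phi}_s=P^{\nu^1}_{s,t-s}\phi$ be the solution of the backward problem (\ref{EDP}) attached to $\nu^1$ provided by Theorem \ref{theoPDE}; thus $\partial_sf^{\phi}_s+L^{\nu^1}_sf^{\phi}_s=0$, $f^{\phi}_t=\phi$, and $f^{\phi}$ obeys the uniform bound (\ref{contrf}). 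The key idea is to test the weak formulation (\ref{eqsN}) of the equation for $\nu^2$ against this single function $f^{\phi}$ built from $\nu^1$, instead of against $P^{\nu^2}$.

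Since $\partial_sf^{\phi}_s+L^{\nu^1}_sf^{\phi}_s=0$, one has $\partial_sf^{\phi}_s+L^{\nu^2}_sf^{\phi}_s=(L^{\nu^2}_s-L^{\nu^1}_s)f^{\phi}_s$, so (\ref{eqsN}) applied to $\nu^2$ with test function $f^{\phi}$ reads
\begin{equation*}
\langle\nu^2_t,\phi\rangle=\langle\nu_0,f^{\phi}_0\rangle+\int_0^t\big\langle\nu^2_s,(L^{\nu^2}_s-L^{\nu^1}_s)f^{\phi}_s+b[\nu^2](\cdot,s)\iG[f^{\phi}_s]-df^{\phi}_s\big\rangle\,ds,
\end{equation*}
while the mild formulation (\ref{mild}) for $\nu^1$ gives $\langle\nu^1_t,\phi\rangle=\langle\nu_0,f^{\phi}_0\rangle+\int_0^t\langle\nu^1_s,b[\nu^1](\cdot,s)\iG[f^{\phi}_s]-df^{\phi}_s\rangle\,ds$ (recall $f^{\phi}_0=P^{\nu^1}_{0,t}\phi$). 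Because both solutions start from $\nu_0$, the initial contributions $\langle\nu_0,f^{\phi}_0\rangle$ cancel upon subtraction: this is the crucial simplification relative to the decomposition (\ref{diffs}), where a term $\langle\nu_0,(P^{\nu^1}-P^{\nu^2})\phi\rangle$ survived. Subtracting leaves $\langle\nu^1_t-\nu^2_t,\phi\rangle=(\mathrm{I})+(\mathrm{II})-(\mathrm{III})$, where $(\mathrm{I})=\int_0^t\langle\nu^1_s-\nu^2_s,\,b[\nu^1](\cdot,s)\iG[f^{\phi}_s]-df^{\phi}_s\rangle\,ds$ pairs the difference of measures with a fixed test function, $(\mathrm{II})=\int_0^t\langle\nu^2_s,\,(b[\nu^1]-b[\nu^2])(\cdot,s)\iG[f^{\phi}_s]\rangle\,ds$ collects the discrepancy of the birth coefficient, and $(\mathrm{III})=\int_0^t\langle\nu^2_s,\,(L^{\nu^2}_s-L^{\nu^1}_s)f^{\phi}_s\rangle\,ds$ is the generator-mismatch term.

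Each term is then estimated. For $(\mathrm{I})$ I would check that $x\mapsto b[\nu^1](x,s)\iG[f^{\phi}_s](x)-d(x)f^{\phi}_s(x)$ belongs to $\iC^2_b(\R_+,\R)$ with $\|\cdot\|\leq C_T$ uniformly in $s\leq t$: indeed (\ref{contrf}) gives $\|f^{\phi}_s\|\leq C_T$, hence $\|\iG[f^{\phi}_s]\|\leq 3\|f^{\phi}_s\|\leq C_T$ (differentiating $\iG$ only brings factors $\alpha,\alpha^2\in[0,1]$), and Assumptions \ref{assf}, \ref{assder} make $b[\nu^1](\cdot,s)$ and $d$ into $\iC^2_b$ multipliers of bounded norm; by definition of $|||\cdot|||$ this yields $(\mathrm{I})\leq C_T\int_0^t|||\nu^1_s-\nu^2_s|||\,ds$. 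For $(\mathrm{II})$ and $(\mathrm{III})$ the coefficient differences are handled by Lemma \ref{lemLipg}: since $\zeta,D,b$ are Lipschitz, one has $\sup_{x\geq0,\,\sigma\leq s}|g[\nu^1]-g[\nu^2]|(x,\sigma)\leq C\int_0^s|||\nu^1_u-\nu^2_u|||\,du$ for $g\in\{\zeta,D,b\}$. Combining this with $\|\iG[f^{\phi}_s]\|_{\infty}\leq C_T$ for $(\mathrm{II})$, and with the pointwise bound $|\partial_xf^{\phi}_s|+|\partial^2_xf^{\phi}_s|\leq C_T$ from (\ref{contrf}) for $(\mathrm{III})$, and using $\sup_{s\leq T}\langle\nu^2_s,1\rangle<\infty$, both are bounded by $C_T\int_0^t\!\int_0^s|||\nu^1_u-\nu^2_u|||\,du\,ds\leq C_T\int_0^t|||\nu^1_u-\nu^2_u|||\,du$. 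Summing the three estimates and taking the supremum over $\|\phi\|\leq1$ gives
\begin{equation*}
|||\nu^1_t-\nu^2_t|||\leq C_T\Big(1+\sup_{0\leq s\leq T}\langle\nu^2_s,1\rangle\Big)\int_0^t|||\nu^1_u-\nu^2_u|||\,du,\qquad\forall t\in[0,T],
\end{equation*}
whence Gronwall's lemma forces $\nu^1_t=\nu^2_t$ for all $t$, and $R^1=R^2$ follows since $R$ is uniquely determined by $\nu$ (the decoupling Proposition of Section 3).

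The step I expect to be delicate is the admissibility of $f^{\phi}=P^{\nu^1}_{\cdot,t-\cdot}\phi$ as a test function in (\ref{eqsN}) for $\nu^2$: Theorem \ref{theoPDE} only gives $f^{\phi}\in\iC^{1,1}([0,t)\times\R_+)\cap\iC^{1,2}([0,t)\times(0,\infty))$, so $\partial^2_xf^{\phi}$ need not extend to $x=0$. Admissibility is rescued by the degeneracy $D[\nu^2](0,s)=D(0,R^{\nu^2}_s)=0$ from \textsc{(A.3)}, exactly as in the proof of Lemma \ref{lemMild}, together with the boundedness of $\partial_sf^{\phi}_s+L^{\nu^2}_sf^{\phi}_s=(L^{\nu^2}_s-L^{\nu^1}_s)f^{\phi}_s$. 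Conceptually, this generator-mismatch term is precisely what the strengthened Assumption \ref{assder} is for: it pairs the coefficient differences of Lemma \ref{lemLipg} with the \emph{second} derivative of $f^{\phi}$, so the uniform $\iC^2$ control (\ref{contrf}) of Theorem \ref{theoPDE}, rather than a semigroup-difference estimate, is what closes the Gronwall argument.
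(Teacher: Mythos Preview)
Your architecture is the paper's: the same norm $|||\cdot|||$, the same backward solution $f^{\phi}$ from Theorem \ref{theoPDE} built on $\nu^1$, the same three-term decomposition (generator mismatch, birth-rate mismatch, measure difference against a fixed $\iC^2_b$ test), the same appeal to Lemma \ref{lemLipg} and (\ref{contrf}), and the same Gronwall closure. The one substantive difference is how the admissibility issue you flag is handled.

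The paper does \emph{not} plug $f^{\phi}$ directly into (\ref{eqsN}); instead it mollifies,
\[
f^{n,\phi}_s(x)=\int_0^{\infty}f^{\phi}_s(x+y)\rho_n(y)\,dy,
\]
with $\rho_n$ supported in $[1/n,2/n]$, so that $f^{n,\phi}\in\iC^{1,\infty}([0,t)\times\R_+)$ is a genuine test function in (\ref{eqsN}) for both $\nu^1$ and $\nu^2$. The price is that $\partial_sf^{n,\phi}_s+L^{\nu^1}_sf^{n,\phi}_s$ is no longer zero but $O(\|\phi\|/n)$ (equation (\ref{cregx}), reusing the computation (\ref{contrfnL}) from Step~1 of Theorem~\ref{theoPDE}); this extra term is carried through the estimate and killed by sending $n\to\infty$ at the very end. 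Your intuition that the degeneracy $D(0,r)=0$ together with the uniform bound (\ref{contrf}) on $\partial_x^2f^{\phi}$ should suffice is exactly what makes the mollification work (the shift by $y\in[1/n,2/n]$ stays in the region where $f^{\phi}$ is $\iC^{1,2}$, and the commutator with $L^{\nu}_s$ is $O(1/n)$ by Lipschitz continuity of $\zeta,D$). But the reference to Lemma~\ref{lemMild} is not quite apt: that lemma extends the mild formulation from $\phi\in\iC^2_b$ to $\phi\in\iC_b$ by density in the \emph{final} condition, whereas here one needs to enlarge the class of time-dependent test functions in (\ref{eqsN}) itself, and the paper's mollification is the clean way to do that.
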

            \begin{proof}
                The existence and the bound follow from Theorem \ref{exist}. It then suffices to show that the solution is unique. Let $\phi\in\iC^2_b(\R_+,\R)$ and $(\nu^1,R^1),(\nu^2,R^2)\in \iC([0,T],\etr)$ be two solutions of (\ref{determ}) such that $\sup_{0\leq t\leq T}\langle\nu^1_t+\nu^2_t,1\rangle < +\infty$. We denote by $f^{\phi}_s(x)$ the solution of (\ref{EDP}) in the previous theorem for $\nu = \nu^1$ and the final condition $\phi$. As in (\ref{regf}), we regularize this solution by setting
                \[ f^{n,\phi}_s(x) = \int_0^{\infty}f^{\phi}_s(x+y)\rho_n(y)dy\, ,\, \forall (s,x)\in[0,t)\times\R_+. \]
                That allows us to approximate $f^{\phi}_s(x)$ and its derivatives by a sequence of functions that are smooth enough. Let us recall that $f^{n,\phi}\in\iC([0,t]\times\R_+,\R)\cap\iC^{1,\infty}([0,t)\times\R_+,\R)$ for all $n\geq 1$. We deduce from (\ref{contrf}) and (\ref{contrfnL}) that
                \begin{equation}\label{cregx}
                    \big|\partial_sf^{n,\phi}_s(x) + L^{\nu^1}_sf^{n,\phi}_s(x)\big| \leq \frac{C_{T}}{n}\|\phi\| , \forall x\geq 0,\forall s\leq t.
                \end{equation}
                It follows with Lemma \ref{lemLipg} that for any $x\geq 0$ and $s\leq t$,
                \begin{displaymath}
                    \begin{array}{l}
                        \displaystyle \left| \partial_sf^{n,\phi}_s(x) + L^{\nu^2}_sf^{n,\phi}_s(x) \right| \leq \left| \left(L^{\nu^2}_s- L^{\nu^1}_s\right)f^{n,\phi}_s(x) \right| + \frac{C_T}{n}\|\phi\| \vspace{0.20cm}
                        \\
                        \displaystyle \hspace{4.2cm} \leq C_T \left( \|f^{n,\phi}_s\|_{\textrm{Lip}}\int_0^t|||\nu^1_u-\nu^2_u|||du + \frac{\|\phi\|}{n} \right)
                    \end{array}
                \end{displaymath}
                and thanks to (\ref{contrf}),
                \begin{equation}\label{cregX}
                    \left| \partial_sf^{n,\phi}_s(x) + L^{\nu^2}_sf^{n,\phi}_s(x) \right| \leq C_T \|\phi\| \left( \int_0^t|||\nu^1_u-\nu^2_u|||du + \frac{1}{n} \right).
                \end{equation}
                We deduce from (\ref{cregx}) and (\ref{cregX}) here above that for each $j=1,2$, we have
                \[ \langle\nu^j_t,f^{n,\phi}_t\rangle = \langle\nu_0,f_0^{n,\phi}\rangle + \int_0^t\big\langle \nu_s^j, \partial_sf^{n,\phi}_s + L^{\nu_j}_sf^{n,\phi}_s + b[\nu^j](\cdot,s)\iG[f^{n,\phi}_s] - d f^{n,\phi}_s \big\rangle ds.  \]
                That implies that
                \begin{displaymath}\begin{array}{l}
                    \displaystyle \langle\nu^1_t-\nu^2_t,f^{n,\phi}_t\rangle = \int_0^{t}\big\langle\nu^1_s, \partial_sf^{n,\phi}_s + L^{\nu^1}_sf^{n,\phi}_s \big\rangle ds - \int_0^{t}\big\langle\nu^2_s, \partial_sf^{n,\phi}_s + L^{\nu^2}_sf^{n,\phi}_s \big\rangle ds \vspace{0.20cm}
                    \\
                    \displaystyle \hspace{2.5cm} +\, \int_0^{t}\big\langle\nu^1_s-\nu^2_s, b[\nu^1](\cdot,s)\iG[f^{n,\phi}_s] - d f^{n,\phi}_s\big\rangle ds \vspace{0.20cm}
                    \\
                    \displaystyle \hspace{2.5cm} +\, \int_0^{t}\big\langle\nu^2_s, \left\{b[\nu^1] - b[\nu^2]\right\}(\cdot,s)\iG[f^{n,\phi}_s]\big\rangle ds
                \end{array}
                \end{displaymath}
                and by considering the bound $\|\iG[f^{n,\phi}_s]\|\leq\|\phi\|_{\infty}$, Lemma \ref{lemLipg}, (\ref{cregx}) and (\ref{cregX}),
                \begin{equation}\label{ineqdiff}\begin{array}{l}
                    \displaystyle \langle\nu^1_t-\nu^2_t,f^{n,\phi}_t\rangle \leq C_T(\nu^1,\nu^2)\|\phi\|\bigg( \frac{1}{n}\int_0^t\big\langle\nu^1_s+\nu^2_s,1\big\rangle ds + \int_0^t|||\nu^1_s-\nu^2_s||| ds \bigg)  \vspace{0.20cm}
                    \\
                    \displaystyle \hspace{2.5cm} +\, \int_0^{t}\big\langle\nu^1_s-\nu^2_s, b[\nu^1](\cdot,s)\iG[f^{n,\phi}_s] - d f^{n,\phi}_s\big\rangle ds .
                \end{array}
                \end{equation}
                Since $\iG$ is an endomorphism of $\big( \iC^2_b(\R_+,\R),\|.\| \big)$, we obviously verify that given $s< t$ and $n\geq 1$, the function $b[\nu^1](\cdot,s)\iG[f^{n,\phi}_s] - d f^{n,\phi}_s$ belongs to $\iC^2_b(\R_+,\R)$ and satisfies
                \[ \left\| b[\nu^1](\cdot,s)\iG[f^{n,\phi}_s] - d f^{n,\phi}_s\right\| \leq C_T\|f^{n,\phi}_s\|. \]
                Since $\|f^{n,\phi}_s\|\leq\|\phi\|$, that implies with (\ref{ineqdiff}) that
                \[ \langle\nu^1_t-\nu^2_t,f^{n,\phi}_t\rangle \leq C_T(\nu^1,\nu^2)\|\phi\|\bigg( \frac{1}{n}\int_0^t\big\langle\nu^1_s+\nu^2_s,1\big\rangle ds + \int_0^t|||\nu^1_s-\nu^2_s||| ds \bigg). \]
                As $n\rightarrow\infty$, this inequality becomes thanks to the dominated convergence theorem for the left term,
                \[ \langle\nu^1_t-\nu^2_t,\phi\rangle \leq C_T(\nu^1,\nu^2)\|\phi\| \int_0^t|||\nu^1_s-\nu^2_s||| ds. \]
                We conclude by taking at first the supremum for $\|\phi\|\leq 1$ and applying the Gronwall lemma, that $\nu^1_t = \nu^2_t$.  
            \end{proof}

    \section{Existence of a function solution}
        In this section, we use the mild formulation (\ref{mild}) to show that under suitable assumptions the solution of the decoupled equation (\ref{maind}) admits a density with respect to the Lebesgue measure. This density will then be the solution of (\ref{PDE}) in the weak sense given by (\ref{determ}). We first make the following general assumptions.
        \begin{assume}\label{asseps}
            \item[\textsc{(A.1)}] There exists $\ec>0$ such that for all $(x,r)\in\R_+\times(0,\bar{R}]$,
            \[ \zeta(x,r)\geq \ec \textrm{ and } D(x,r)\geq \ec x. \]
            
            \item[\textsc{(A.2)}] $D(x,r)$ is null for $x=0$ and positive for all $(x,r)\in (0,\infty)\times(0,\bar{R}]$. In addition, there exist $C,k>0$ and $\beta\in (0,1]$ such that
            \[ |D(x,r) - D(x,r')| \leq C(1 + x^k)|r-r'|^{\beta}, \forall x\geq 0, \forall r,r'\in (0,\bar{R}]. \]
            
            \item[\textsc{(A.3)}] $\zeta(x,r), D(x,r)\in\iC(\R_+\times(0,\bar{R}],\R)$ are Lipschitz continuous in $x\in\R_+$, uniformly in $r\in(0,\bar{R}]$.
        \end{assume}
        \noindent We easily verify that the above assumption (A.2) implies that the decoupled function $D[\nu]$ satisfies the same H\"older property in its time variable. Indeed, it follows from (\ref{eqR}) that for all $x\geq 0$ and $s,t\in [0,T]$,
        \[ |D[\nu](x,t) - D[\nu](x,s)| = |D(x,R_t) - D(x,R_s)| \leq C(1+x^k)|R_t-R_s|^{\beta} \leq C(1+x^k)|t-s|^{\beta}. \]
        We have the preliminary result
        \begin{prop}\label{sgdensity}
            Under Assumption \ref{asseps}, there exists a unique non negative function $p^{\nu}_{s,t}(x,y)$ defined for all $(t,x,y)\in[0,T]\times\R_+\times\R_+$ and $s< t$, such that for all test functions $\phi\in\iC_b(\R_+,\R)$,
            \begin{equation}\label{semg}
                P^{\nu}_{s,t-s}\phi(x) = \int_0^{\infty}p^{\nu}_{s,t}(x,y)\phi(y)dy.
            \end{equation}
            In addition, if we set
            \begin{equation}\label{constabm}
                c_{\alpha,k} = \alpha\vee\frac{(k - 1)_+}{2} ~\textrm{ and }~\eta_{\alpha,\beta,m} = \beta\wedge\frac{1}{2}\wedge\frac{m-3\alpha}{2m/\alpha} 
            \end{equation}
            for any $0<\alpha<1$, $m> 3\alpha$ and $m\geq 1$, then the function
            \[ y\mapsto D^{m/2}[\nu](y,t)p^{\nu}_{s,t}(x,y) \]
            is, for any $(x,t)\in\R_+\times(0,T]$ and $s<t$, in the Besov space $\iB^{\frac{2m}{2m + 3\alpha}\eta_{\alpha,\beta,m}}_{1,\infty}(\R_+^*)$ (defined in Appendix \ref{appC}) and satisfies the bound
            \begin{equation}\label{besovdens}
                \sup_{s< t\leq T}(t-s)^{m/2}\left\|D^{m/2}[\nu](\cdot,t)p^{\nu}_{s,t}(x,\cdot)\right\|_{\iB^{\frac{2m}{2m+3\alpha}\eta_{\alpha,\beta,m}}_{1,\infty}(\R^*_+)} \leq C \left( 1+x^{c_{\alpha,k}+m/2} \right) ,\,\forall x\geq 0.
            \end{equation}
        \end{prop}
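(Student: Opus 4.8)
The plan is to reduce the whole statement to a single quantitative finite-difference estimate and then invoke the characterisation of Besov spaces recalled in Appendix \ref{appC}. For fixed $x\geq 0$, $s<t$ and $m$ as in the statement, introduce the finite measure $\mu$ on $\R_+$ defined by
\[ \langle\mu,\psi\rangle = \E\left[\psi(X^{\nu,x}_{s,t})\,D^{m/2}[\nu](X^{\nu,x}_{s,t},t)\right],\qquad \psi\in\iC_b(\R_+,\R), \]
whose total mass is at most $C(1+x^{m/2})$ since $D[\nu](\cdot,t)\leq Lx$ (as $D$ is Lipschitz in $x$ and vanishes at $0$) and by the moment bound (\ref{moment}). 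Writing $\Delta_h^N\phi(y)=\sum_{j=0}^N\binom{N}{j}(-1)^{N-j}\phi(y+jh)$ for the $N$-th forward difference, the goal becomes: for a fixed integer $N$ exceeding both $a=\frac{2m}{2m+3\alpha}\eta_{\alpha,\beta,m}$ and the auxiliary exponent $\gamma$ used below, to bound
\[ \left|\E\left[\Delta_h^N\phi(X^{\nu,x}_{s,t})\,D^{m/2}[\nu](X^{\nu,x}_{s,t},t)\right]\right| \leq C(t-s)^{-m/2}\big(1+x^{c_{\alpha,k}+m/2}\big)\,|h|^{a}\,\|\phi\|_{\infty} \]
uniformly in $\phi\in\iC_b$ and $|h|\leq 1$. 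Once this holds, Appendix \ref{appC} gives $\mu\in\iB^{a}_{1,\infty}(\R^*_+)$ with precisely the norm bound (\ref{besovdens}); as $a>0$ this forces $\mu$ to be absolutely continuous, and since $D[\nu](\cdot,t)>0$ on $(0,\infty)$ by (A.2) its density $q$ yields $p^{\nu}_{s,t}(x,\cdot)=q/D^{m/2}[\nu](\cdot,t)$ on $(0,\infty)$. The identity (\ref{semg}) for all $\phi\in\iC_b$, hence existence and a.e.-uniqueness of the non-negative density, then follows once one knows that the law of $X^{\nu,x}_{s,t}$ charges no atom at $0$, which I would derive from $\zeta\geq\ec>0$ in (A.1) forcing the drift to push the process strictly away from the degeneracy point.

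The heart is the finite-difference bound, which I would prove by the short-time Gaussian-freezing argument of Debussche--Romito and Romito. Fix a length $0<\delta\leq t-s$, condition on $\iF_{t-\delta}$ and set $Y=X^{\nu,x}_{s,t-\delta}$. Over $[t-\delta,t]$ I would freeze the coefficients at $(Y,t-\delta)$ and compare $X^{\nu,x}_{s,t}$ with the conditionally Gaussian variable of mean $Y+\zeta[\nu](Y,t-\delta)\delta$ and variance $\sigma^2=2D[\nu](Y,t-\delta)\delta$. For such a Gaussian, discrete integration by parts transfers the finite difference onto the density $g_\sigma$, and the elementary estimate $\|\Delta_h^N g_\sigma\|_{L^1}\lesssim(|h|/\sigma)^{\gamma}$ valid for every $0\leq\gamma\leq N$ controls the frozen contribution by $\|\phi\|_{\infty}(|h|/\sigma)^{\gamma}$. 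The errors separating $X^{\nu,x}_{s,t}$ from this Gaussian are of three kinds: the Lipschitz-in-$x$ regularity of $\zeta,D$ from (A.3) gives $\E[\sup_{[t-\delta,t]}|X_u-Y|^p\mid\iF_{t-\delta}]\lesssim\delta^{p/2}(1+Y^p)$; the Hölder-in-time control of $D[\nu]$ noted right after Assumption \ref{asseps} gives a freezing error of order $(1+Y^k)\delta^{\beta}$; and the same estimates let me replace the weight $D^{m/2}[\nu](X_t,t)$ by $D^{m/2}[\nu](Y,t-\delta)$ up to a comparable error.

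It then remains to take expectations and optimise. The away-from-zero bound $D[\nu](Y,t-\delta)\geq\ec\,Y$ from (A.1) gives $\sigma^{-\gamma}\lesssim(Y\delta)^{-\gamma/2}$, so the leading term is of order $|h|^{\gamma}\delta^{-\gamma/2}\,\E[Y^{(m-\gamma)/2}]$: here the role of the weight $D^{m/2}$ is exactly to absorb the singular factor $\sigma^{-\gamma}$ at the degeneracy point, which is licit precisely when $\gamma\leq m$, and (\ref{moment}) then yields $\E[Y^{(m-\gamma)/2}]\lesssim 1+x^{(m-\gamma)/2}$. Balancing this term against the error terms of orders $\delta^{\beta}$ and $\delta^{1/2}$ and choosing $\delta$ as a suitable power of $|h|$ — the scaling exponent parametrised by $\alpha$ under the admissibility constraint $m>3\alpha$ — produces the Besov exponent $a=\frac{2m}{2m+3\alpha}\eta_{\alpha,\beta,m}$, the polynomial weight $1+x^{c_{\alpha,k}+m/2}$, and (in the worst case $\delta\sim t-s$) the time singularity $(t-s)^{-m/2}$. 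The three terms in $\eta_{\alpha,\beta,m}=\beta\wedge\frac12\wedge\frac{m-3\alpha}{2m/\alpha}$ should then be read off as the time-Hölder exponent $\beta$, the $\delta^{1/2}$ spatial deviations, and the degeneracy--weight constraint respectively.

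The main obstacle is the conjunction of degeneracy and low regularity of $D$. Near $x=0$ the smoothing scale $\sigma^2\sim x\delta$ collapses, so the bare factor $\sigma^{-\gamma}$ is not integrable against the law of $Y$; this is exactly why one multiplies by $D^{m/2}$ and must take $m$ large (whence $m>3\alpha$), and why no regularity can be expected up to the point $0$. The delicate part is therefore the bookkeeping that converts the three heterogeneous error contributions — non-Gaussianity, the $\delta^{1/2}$ spatial deviations and the $\delta^{\beta}$ time-freezing error — into a single clean power of $|h|$, while simultaneously keeping the $x$-dependence polynomial and the time blow-up exactly of order $(t-s)^{-m/2}$; the precise values in (\ref{constabm}) are the outcome of this optimisation.
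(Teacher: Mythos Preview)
Your overall architecture matches the paper's: Romito's finite-difference criterion applied to the measure $D^{m/2}[\nu](\cdot,t)\,\mathrm{law}(X^{\nu,x}_{s,t})$, short-time Gaussian freezing over $[t-\delta,t]$, the weight $D^{m/2}$ absorbing the degenerate factor $\sigma^{-\gamma}$, and an optimisation in $\delta$. The paper does exactly this, with $\delta=\varepsilon$, $N=m$, and the frozen process $Y^{\varepsilon,x}$ obtained by freezing only the spatial argument of $D$ (not the drift, and not the time variable, which is why a time-averaged $\sigma_{t-\varepsilon,t}$ appears).

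There is, however, a genuine gap in your error analysis. You propose to test against $\phi\in\iC_b$ with $\|\phi\|_\infty$, and then you claim that the ``non-Gaussianity'' error --- comparing $\Delta^N_h\phi(X^{\nu,x}_{s,t})$ with $\Delta^N_h\phi(\text{frozen Gaussian})$ --- is small because $\E[|X_t-Y|^p]\lesssim\delta^{p/2}$. But with $\phi$ merely bounded, $|\Delta^N_h\phi(x)-\Delta^N_h\phi(y)|$ carries no smallness in $|x-y|$; that difference is just $\leq 2^{N+1}\|\phi\|_\infty$. The paper closes this by taking $\phi\in\eC^{\alpha}_b(\R)$, so that $|\Delta^m_h\phi(x)-\Delta^m_h\phi(y)|\leq C\|\phi\|_{\eC^{\alpha}_b}|x-y|^{\alpha}$ and $\|\Delta^m_h\phi\|_\infty\leq C|h|^{\alpha}\|\phi\|_{\eC^{\alpha}_b}$ (Lemma \ref{propD}). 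This is where $\alpha$ actually enters: it is the H\"older exponent of the test functions, not a free ``scaling exponent'' for the choice of $\delta$. The four error pieces then carry the factors $|h|^{\alpha}\varepsilon^{\beta/2}$, $|h|^{\alpha}\varepsilon^{1/4}$, $\varepsilon^{3\alpha/4}$ and $|h|^{m}\varepsilon^{-m/2}$, the optimisation gives a total $|h|$-exponent $\theta=\alpha+\frac{2m}{2m+3\alpha}\eta_{\alpha,\beta,m}$, and Lemma \ref{lemcond} converts this into Besov regularity $\theta-\alpha=\frac{2m}{2m+3\alpha}\eta_{\alpha,\beta,m}$. Your target inequality with $\|\phi\|_\infty$ and exponent $a$ is true \emph{a posteriori}, but it is not what one can prove directly from the freezing scheme.

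A secondary point: ``$\zeta\geq\ec$ pushes the process away from $0$'' is the right intuition for the no-atom statement, but it is not a proof. The paper compares with the explicit CIR-type process $dZ=\ec\,dt+\sqrt{2\ec Z}\,dW$, exploits its self-similarity to identify the $t$-marginal started from $0$, and shows $\eP(Z^0_t=0)=0$; only then does comparison give $\eP(X^x_{s,t}=0)=0$. Some such quantitative step is needed.
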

        \begin{proof}
            The left term in (\ref{semg}) is defined by
            \[ P^{\nu}_{s,t-s}\phi(x) = \E\left[ \phi(X^{x}_{s,t}) \right] , \forall x\geq 0, \forall 0\leq s< t\leq T. \]
            The proof then consists in showing that the random variable $X^x_{s,t}$ for fixed $x\geq 0$ and $0\leq s<t\leq T$ admits a density with respect to the Lebesgue measure. As the diffusion coefficient in equation (\ref{edsf}) is degenerate at $x=0$, we will first verify that there is no atom at this point and secondly show that there is a density on the open set $\R_+^{*}$.
            \paragraph{Step 1: No atom on the boundary $\{0\}$.} Let us first notice that for all $x\geq 0$ and $0\leq s<T$, it follows from the comparison theorem that $X^0_{s,t} \leq X^x_{s,t}$ a.s for all $t\in(s,T]$. It then suffices to show that $X^0_{s,t}$ does not have an atom at $\{0\}$. 
            
            \noindent Let us introduce for $h>0$ the convex continuous and non increasing function
            \[ \chi_h(x) = \left( 1 - \frac{x}{h} \right)_+. \]
            Then as $h\rightarrow 0$, the sequence $(\chi_h)_{h>0}$ converges toward $1_{\{x=0\}}$. In addition, we have
            \begin{equation}\label{ineqh}
                \frac{1}{2}1_{[0,h/2]} \leq \chi_h\leq 1_{[0,h]} \textrm{ on }\R_+
            \end{equation}
            and the function defined for fixed $t\in (0,T]$ by
            \begin{equation}
                u_h(x,s) = \E_{x,s}\left[ \chi_h(X^x_{s,t}) \right] , \forall (x,s)\in\R^*_+\times[0,t]
            \end{equation}
            is a classical solution of the partial differential equation
            \begin{equation}
                \partial_su_h + \zeta[\nu](x,s)\partial_xu_h + D[\nu](x,s)\partial_x^2u_h = 0 \textrm{ on }\R^*_+\times[0,t).
            \end{equation}
            Our aim is to show that $\eP(X^0_{s,t}=0) = \lim_{h\rightarrow 0}u_h(0,s) = 0$ for all $s\in[0,t]$. For that purpose, let us first notice that the function $u_h$ is convex since its initial condition $\chi_h$ is also convex (see \cite{lions}). It follows thanks to (A.1) in Assumption \ref{asseps} that
            \begin{equation}
                \partial_su_h + \zeta[\nu](x,s)\partial_xu_h + \ec \,x\partial_x^2u_h \leq 0 \textrm{ on }\R^*_+\times[0,t).
            \end{equation}
            It follows from the Maximum principle that $u_h \leq \bar{u}_h$ where
            \[ \bar{u}_h(x,s) = \E\left[ \chi_h(Y^x_{s,t}) \right] \]
            with
            \[ dY_t = \zeta[\nu](Y_t,t)dt + \sqrt{2\ec\, Y_t}\, dW_t. \]
            The function $\bar{u}_h$ is, indeed, a classical solution of the partial differential equation
            \[ \partial_s\bar{u}_h + \zeta(x,s)\partial_x\bar{u}_h + \ec\, x\partial_x^2\bar{u}_h = 0 , \textrm{ on }\R_+^*\times[0,t) \]
            with the same terminal condition $\chi_h$. Further, thanks to (A.1) in Assumption \ref{asseps}, it follows from the comparison theorem for stochastic differential equations that $Y^x_{s,t}\geq Z^x_{s,t}$ a.s, where
            \begin{equation}\label{edsZ}
                dZ_t = \ec\, dt + \sqrt{2\ec\, Z_t}\, dW_t.
            \end{equation}
            Since $\chi_h$ is decreasing we obtain 
            \[ \bar{u}_h(x,s) \leq \tilde{u}_h(x,s) := \E\left[ \chi_h(Z^x_{s,t}) \right], \]
            and thanks to (\ref{ineqh}), 
            \begin{equation}\label{ineqF}
                \frac{1}{2}F^x_{s,t}(h/2) \leq u_h(x,s) \leq \tilde{u}_h(x,s) \leq \tilde{F}^x_{s,t}(h)
            \end{equation}
            where $F^x_{s,t}$ and $\tilde{F}^x_{s,t} = \tilde{F}^x_{t-s}$ are the cumulative distribution functions of the random variables $X^x_{s,t}$ and $Z^x_{s,t}$ respectively. Further it directly follows from the autosimilarity property of the Brownian motion and the uniqueness in law of the solution of (\ref{edsZ}) given the initial condition, that
            \[ cZ^{x/c}_{t/c} = Z^x_{t} \textrm{ in law, }\forall c>0, \forall t,x\in\R_+. \]
            We deduce that specifically for $x=0$,
            \[ \tilde{F}^0_t(y) = \tilde{F}^0_{t/c}(y/c) , \forall t,y\geq 0, \forall c>0 \]
            and then there exists a function $G:\R_+\rightarrow\R_+$ such that
            \begin{equation}\label{eqG}
                \tilde{F}^0_t(y) = G(y/t) , \forall y\geq 0, \forall t>0.
            \end{equation}
            Since $\tilde{F}^0$ is a weak solution of
            \begin{equation}\label{systF}
                \left\{ \begin{array}{l}
                            \displaystyle \partial_t\tilde{F}^0_{t} - \ec\partial_y(y\partial_y\tilde{F}^0_{t}) + \ec\partial_y\tilde{F}^0_{t} = 0 \vspace{0.20cm}
                            \\
                            \displaystyle (\tilde{F}^0_{t}(y))_{|t=0} = 1, \forall y\geq 0,
                        \end{array}\right.
            \end{equation}
            we consider the time change $\bar{F}_t(y) = \tilde{F}_{t/\ec}(y)$ and $\bar{G}(y) = G(\ec\, y)$, then we have
            \[ \partial_t\bar{F}_{t} - \partial_y(y\partial_y\bar{F}_{t}) + \partial_y\bar{F}_{t} = 0 \]
            with the same initial condition as (\ref{systF}). One deduces with (\ref{eqG}), that
            \[ -y\bar{G}’(y)+\bar{G}’(y)-(y\bar{G}’(y))’ = 0 \]
            whose general solution is given by
            \[ \bar{G}(y) = a_1 + a_2e^{-y} , \forall y>0 \]
            with $a_1,a_2\in\R$. Coming back to the cumulative distribution function of $Z^0_t$, we get
            \begin{equation}\label{limF}
                \tilde{F}^0_t(y) = \bar{G}\left( \frac{y}{\ec\,t} \right) \xrightarrow[y\rightarrow 0]{} a_1+a_2.
            \end{equation}
            Since $(Z^0_{t})_{t\in[s,T]}$ is a non-negative semi-martingale with a zero local time at $0$ (see Lemma \ref{appenBf}, Appendix \ref{appB}), it follows from the Tanaka formula that
            \[ \int_0^t1_{\{Z^0_{u} = 0\}}dZ^0_{u} = \int_0^t1_{\{Z^0_{u} = 0\}}d(Z^0_{u})^+ = \int_0^t1_{\{Z^0_{u} = 0\}}\left( 1_{\{ Z^0_{u}>0 \}}dZ^0_{u} + \frac{1}{2}dL^0_{u}(Z^0) \right) = 0, \]
            and then
            \[ 0 = \int_0^t1_{\{Z^0_{u} = 0\}}\left[ \ec\, du + \sqrt{2\ec\, Z^0_{u}}\, dW_u \right] = \ec\int_0^t1_{\{Z^0_{u} = 0\}}du. \]
            We deduce by taking the expectation, that
            \[ \int_0^t\eP\big(Z^0_{u} = 0\big)du = 0 \, ,\, \forall t\in(s,T]. \]
            Noticing that $\eP(Z^0_{u} = 0)= \tilde{F}^0_u(0)$, we conclude that {$a_1+a_2=0$} in (\ref{limF}). It follows thanks to (\ref{ineqF}) that
            \[ \eP(X^0_{s,t} = 0) = \lim_{h\rightarrow 0}F^0_{s,t}(h/2) \leq 2 \lim_{h\rightarrow 0}\tilde{F}^0_{t-s}(h) = 0\, ,\, \forall t\in(s,T]. \]
            Then for any test function $\phi\in\iC_b(\R_+,\R)$,
            \[ \E\left[ \phi(X^x_{s,t}) \right] = \phi(0)\underbrace{\eP\left[ X^x_{s,t} = 0 \right]}_{= \, 0} +\, \E\left[ \phi(X^x_{s,t}) 1_{\{X^x_{s,t}>0\}} \right], \]
            and then it suffices to show that the random variable $X^x_{s,t}$ has a density with respect to the Lebesgue measure on $\R^*_+$, to conclude that it has a density on $\R_+$.
            \paragraph{Step 2: Existence of a density on $\R_+^*$.} We adapt the proof developed by Romito \cite{romito}, Proposition 3.1 in the case of non homogeneous and non bounded coefficients. It consists in showing the sufficient condition given in Lemma \ref{lemcond} (see Appendix \ref{appC}) with a suitable function. For that purpose, we introduce for a given initial time $s\in[0,T)$, a fixed time $t\in(s,T]$ and $0< \varepsilon<t-s$, the non negative function
            \begin{equation}
                \sigma_{t-\varepsilon,t}(y) = \sqrt{\frac{1}{\varepsilon}\int_{t-\varepsilon}^tD[\nu](y,u)\,du} \, ,\, \forall y\geq 0
            \end{equation}
            that satisfy with $D^{1/2}[\nu](\cdot,t)$ the two following properties : 
            \begin{equation}\label{propSig}
                \begin{array}{l}
                    \displaystyle \textrm{(P1) : }D^{1/2}[\nu](y,t) + \sigma_{t-\varepsilon,t}(y) \leq Cy^{1/2} , \forall y\geq 0, \vspace{0.20cm}
                    \\
                    \displaystyle \textrm{(P2) : }|D^{1/2}[\nu](x,t) - D^{1/2}[\nu](y,t)| + |\sigma_{t-\varepsilon,t}(x) - \sigma_{t-\varepsilon,t}(y)| \leq C|x-y|^{1/2} , \forall x,y\geq 0.
                \end{array}
            \end{equation}
            Let $m\geq 1$, $0<\alpha<1$ and $h\in[-1,1]$, our aim is to show that the inequality (\ref{hypmes}) holds for the random variable $X^x_{s,t}$ with the function $y\mapsto D^{m/2}[\nu](y,t)$. Indeed, for any $\phi\in\eC^{\alpha}_b(\R)$,
            \begin{equation}\label{firstdec}
                \begin{array}{l}
                    \displaystyle \E\left[ D^{m/2}[\nu](X^x_{s,t},t)\Delta^m_h\phi(X^x_{s,t}) \right] = \E\left[ \left\{ D^{m/2}[\nu](X^x_{s,t},t) - \sigma_{t-\varepsilon,t}^m(X^x_{s,t}) \right\}\Delta^m_h\phi(X^x_{s,t}) \right] \vspace{0.20cm} 
                    \\
                    \displaystyle \hspace{6cm} +\, \E\left[ \sigma_{t-\varepsilon,t}^m(X^x_{s,t})\Delta^m_h\phi(X^x_{s,t}) \right].
                \end{array}
            \end{equation}
            Thanks to (A.2) in Assumption \ref{asseps}, we obtain
            \begin{displaymath}
                \begin{array}{ll}
                    \displaystyle \left| D^{m/2}[\nu](X^x_{s,t},t) - \sigma^m_{t-\varepsilon,t}(X^x_{s,t}) \right| \leq C\left| D^{1/2}[\nu](X^x_{s,t},t) - \sigma_{t-\varepsilon,t}(X^x_{s,t}) \right| \vspace{0.20cm}
                    \\
                    \displaystyle \hspace{7cm}\times\bigg(1 + |\sigma_{t-\varepsilon,t}(X^x_{s,t})|^{m-1} + |D^{1/2}[\nu](X^x_{s,t},t)|^{m-1} \bigg) \vspace{0.20cm}
                    \\
                    \displaystyle \hspace{5.3cm} \leq C\left(\sqrt{\frac{1}{\varepsilon}\int_{t-\varepsilon}^t\big|D[\nu](X^x_{s,t},t)-D[\nu](X^x_{s,t},u)\big|du}\,\right)\bigg(1+ (X^x_{s,t})^{\frac{m-1}{2}}\bigg) \vspace{0.20cm}
                    \\
                    \displaystyle \hspace{5.3cm} \leq C\left(\sqrt{\frac{1}{\varepsilon}\int_{t-\varepsilon}^t(t-u)^{\beta}du}\,\right) \left(1 + (X^x_{s,t})^k\right)^{1/2} \bigg(1+(X^x_{s,t})^{\frac{m-1}{2}}\bigg) \vspace{0.20cm}
                    \\
                    \displaystyle \hspace{5.3cm} \leq C \left(1 + (X^x_{s,t})^{\frac{m+k-1}{2}}\right)\varepsilon^{\beta/2}.
                \end{array}
            \end{displaymath}
            It follows from the moment property (\ref{moment}) and the property 1. of Lemma \ref{propD} (see Appendix \ref{appC}) that the first term in the right side of (\ref{firstdec}) satisfies
            \begin{equation}\label{ffterm}
                \E\left[ \left\{ D^{m/2}[\nu](X^x_{s,t},t) - \sigma_{t-\varepsilon,t}^m(X^x_{s,t}) \right\}\Delta^m_h\phi(X^x_{s,t}) \right] \leq C {\big(1+x^{\frac{m+k-1}{2}}\big)} |h|^{\alpha}\|\phi\|_{\eC^{\alpha}_b(\R)}\varepsilon^{\beta/2}.
            \end{equation}
            In order to get a good control on the second term of the same equation, we introduce the following new process 
            \begin{displaymath}
                \forall u\in [s,t], \, Y^{\varepsilon,x}_{s,u} = \left\{\begin{array}{ll}
                    \displaystyle X^x_{s,u} & \textrm{ if }u\leq t-\varepsilon, \vspace{0.20cm}
                    \\
                    \displaystyle X^x_{s,t-\varepsilon} + \int_{t-\varepsilon}^u\sqrt{2D[\nu](X^x_{s,t-\varepsilon},u)}\, dW_u & \textrm{ if } u > t-\varepsilon.
                \end{array}\right.
            \end{displaymath}
            This process is continuous and Gaussian on the interval $(t-\varepsilon, t]$ when it is conditioned by $X^x_{t-\varepsilon}$. Then,
            \begin{equation}\label{decompo}
                \begin{array}{l}
                    \displaystyle \E\left[ \sigma_{t-\varepsilon,t}^m(X^x_{s,t})\Delta^m_h\phi(X^x_{s,t}) \right] = \E\left[ \left\{ \sigma_{t-\varepsilon,t}^m(X^x_{s,t}) - \sigma_{t-\varepsilon,t}^m(X^x_{s,t-\varepsilon}) \right\}\Delta^m_h\phi(X^x_{s,t}) \right] \vspace{0.20cm}
                    \\
                    \displaystyle \qquad\qquad\qquad\qquad\qquad\qquad +~ \E\left[ \sigma_{t-\varepsilon,t}^m(X^x_{s,t-\varepsilon})\left\{ \Delta^m_h\phi(X^x_{s,t}) - \Delta^m_h\phi(Y^{\varepsilon,x}_{s,t})\right\} \right] \vspace{0.20cm}
                    \\
                    \displaystyle \qquad\qquad\qquad\qquad\qquad\qquad +~ \E\left[ \sigma^m_{t-\varepsilon,t}(X^x_{s,t-\varepsilon})\Delta^m_h\phi(Y^{\varepsilon,x}_{s,t}) \right].
                \end{array}
            \end{equation}
            It now suffices to show that each term of this decomposition can be well controlled. For the first one, it follows from (\ref{propSig}) and the property 1. of Lemma \ref{propD} (see Appendix \ref{appC}) that
            \begin{displaymath}
                \begin{array}{l}
                    \displaystyle \E\left[ \left\{ \sigma_{t-\varepsilon,t}^m(X^x_{s,t}) - \sigma_{t-\varepsilon,t}^m(X^x_{s,t-\varepsilon}) \right\}\Delta^m_h\phi(X^x_{s,t}) \right] \vspace{0.30cm} 
                    \\
                    \displaystyle \hspace{1cm} \leq C |h|^{\alpha}\|\phi\|_{\eC^{\alpha}_b(\R)}\E\left[ \left| \sigma_{t-\varepsilon,t}^m(X^x_{s,t}) - \sigma_{t-\varepsilon,t}^m(X^x_{s,t-\varepsilon}) \right| \right] \vspace{0.20cm}
                    \\
                    \displaystyle \hspace{1cm} \leq C |h|^{\alpha}\|\phi\|_{\eC^{\alpha}_b(\R)}\E\bigg[ \left| \sigma_{t-\varepsilon,t}(X^x_{s,t}) - \sigma_{t-\varepsilon,t}(X^x_{s,t-\varepsilon}) \right|\vspace{0.20cm}
                    \\
                    \displaystyle \hspace{5cm}\times\,\bigg(1+ |\sigma_{t-\varepsilon,t}(X^x_{s,t})|^{m-1}+|\sigma_{t-\varepsilon,t}(X^x_{s,t-\varepsilon})|^{m-1}\bigg) \bigg] \vspace{0.20cm}
                    \\
                    \displaystyle \hspace{1cm} \leq C |h|^{\alpha}\|\phi\|_{\eC^{\alpha}_b(\R)}\E\left[ \left| \sigma_{t-\varepsilon,t}(X^x_{s,t}) - \sigma_{t-\varepsilon,t}(X^x_{s,t-\varepsilon}) \right|\sup_{0\leq u\leq t}\bigg(1+(X^x_{s,u})^{\frac{m-1}{2}}\bigg) \right] \vspace{0.20cm}
                    \\
                    \displaystyle \hspace{1cm} \leq C |h|^{\alpha}\|\phi\|_{\eC^{\alpha}_b(\R)}\E\left[ \left| X^x_{s,t} - X^x_{s,t-\varepsilon} \right|^{1/2}\sup_{0\leq u\leq t}\left(1+(X^x_{s,u})^{\frac{m-1}{2}} \right) \right].
                \end{array}
            \end{displaymath}
            The Cauchy-Schwarz inequality and the moment property (\ref{moment}) imply that
            \begin{displaymath}\begin{array}{l}
                \displaystyle \E\left[ \left\{ \sigma_{t-\varepsilon,t}^m(X^x_{s,t}) - \sigma_{t-\varepsilon,t}^m(X^x_{s,t-\varepsilon}) \right\}\Delta^m_h\phi(X^x_{s,t}) \right] \vspace{0.20cm} 
                \\
                \displaystyle \hspace{2cm}\leq C |h|^{\alpha}\|\phi\|_{\eC^{\alpha}_b(\R)}\E\left[ \left| X^x_{s,t} - X^x_{s,t-\varepsilon} \right|\right]^{1/2}\E\left[\sup_{0\leq u\leq t}\left(1+(X^x_{s,u})^{m-1} \right) \right]^{1/2}  \vspace{0.20cm}
                \\
                \displaystyle \hspace{2cm} \leq C {\big( 1 + x^{\frac{m-1}{2}} \big)} |h|^{\alpha}\|\phi\|_{\eC^{\alpha}_b(\R)}\E\left[ \left| X^x_{s,t} - X^x_{s,t-\varepsilon} \right|\right]^{1/2}.
            \end{array}
            \end{displaymath}
            In addition, for all $s\leq t-\varepsilon\leq r\leq t$, 
            \[ \left| X^x_{s,r} - X^x_{s,t-\varepsilon} \right| \leq \int_{t-\varepsilon}^r|\zeta[\nu](X^x_{s,u},u)|du + \sup_{t-\varepsilon\leq r'\leq r}\left| \int_{t-\varepsilon}^{r'}\sqrt{2D[\nu](X^x_{s,u},u)}\, dW_u \right|. \]
            Then, using the Burkholder-Davis-Gundy inequality and the moment property (\ref{moment}),
            \begin{equation}\label{ineqX}
                \begin{array}{l}
                    \displaystyle \E\left[ \left| X^x_{s,r} - X^x_{s,t-\varepsilon} \right| \right] \leq C\left\{\varepsilon \E\left[ \sup_{s\leq u\leq t}\left(1 + X^x_{s,u} \right) \right] + \E\left[ \left( \int_{t-\varepsilon}^r 2D[\nu](X^x_{s,u},u)du \right)^{1/2} \right]\right\} \vspace{0.20cm}
                    \\
                    \displaystyle \qquad\qquad\qquad\qquad~ \leq C\left\{\varepsilon\E\left[ \sup_{s\leq u\leq t}\left(1 + X^x_{s,u} \right) \right] + \varepsilon^{1/2}\E\left[ \sup_{s\leq u\leq t}\left(X^x_{s,u}\right)^{1/2} \right] \right\} \vspace{0.20cm}
                    \\
                    \displaystyle \qquad\qquad\qquad\qquad~ \leq C{\big( 1+x \big)}\varepsilon^{1/2}.
                \end{array}
            \end{equation}
            That implies for the first term in the right side of (\ref{decompo}) that
            \begin{equation}\label{firsterm}
                \E\left[ \left\{ \sigma^m_{t-\varepsilon,t}(X^x_{s,t}) - \sigma^m_{t-\varepsilon,t}(X^x_{s,t-\varepsilon}) \right\}\Delta^m_h\phi(X^x_{s,t}) \right] \leq C {\big( 1+x^{\frac{m}{2}} \big)} |h|^{\alpha}\|\phi\|_{\eC^{\alpha}_b(\R)}\varepsilon^{1/4}.
            \end{equation}
            The second one satisfies with (P1) in (\ref{propSig}), the property 2. of Lemma \ref{propD} (see Appendix \ref{appC}), the H\"older inequality and the moment property (\ref{moment}),
            \begin{displaymath}
                \begin{array}{l}
                    \displaystyle \E\left[ \sigma_{t-\varepsilon,t}^m(X^x_{s,t-\varepsilon})\left\{ \Delta^m_h\phi(X^x_{s,t}) - \Delta^m_h\phi(Y^{\varepsilon,x}_{s,t})\right\} \right] \vspace{0.20cm} 
                    \\
                    \displaystyle \hspace{2cm} \leq C \|\phi\|_{\eC^{\alpha}_b(\R)} \E\left[\left| X^x_{s,t} - Y^{\varepsilon,x}_{s,t} \right|^{\alpha} \sup_{s\leq u\leq t}(X^x_{s,u})^{m/2} \right] \vspace{0.20cm}
                    \\
                    \displaystyle \hspace{2cm} \leq C \|\phi\|_{\eC^{\alpha}_b(\R)} \E\left[\left| X^x_{s,t} - Y^{\varepsilon,x}_{s,t} \right|\right]^{\alpha}\E\left[ \sup_{s\leq u\leq t}(X^x_{s,u})^{m/2(1-\alpha)} \right]^{1-\alpha} \vspace{0.20cm}
                    \\
                    \displaystyle \hspace{2cm} \leq C {\big( 1+x^{m/2} \big)} \|\phi\|_{\eC^{\alpha}_b(\R)} \E\left[\left| X^x_{s,t} - Y^{\varepsilon,x}_{s,t} \right|\right]^{\alpha}.
                \end{array}
            \end{displaymath}
            On the other hand,
            \begin{displaymath}
                \begin{array}{l}
                    \displaystyle \left| X^x_{s,t} - Y^{\varepsilon,x}_{s,t} \right| \leq \int_{t-\varepsilon}^t|\zeta[\nu](X^x_{s,u},u)|du \vspace{0.20cm}
                    \\
                    \displaystyle \hspace{2.7cm} +\, \sup_{t-\varepsilon\leq t'\leq t}\left| \int_{t-\varepsilon}^{t'}\left[ \sqrt{2D[\nu](X^x_{s,u},u)} - \sqrt{2D[\nu](X^x_{s,t-\varepsilon},u)}\, \right]\, dW_u \right|,
                \end{array}
            \end{displaymath}
            and thanks to the Burkholder-Davis-Gundy, the H\"older inequalities and (\ref{moment}),
            \begin{displaymath}
                \begin{array}{l}
                    \displaystyle \E\left[ \left| X^x_{s,t} - Y^{\varepsilon,x}_{s,t} \right| \right] \leq C\varepsilon \E\left[ \sup_{s\leq u\leq t}\left(1 + X^x_{s,u} \right) \right] \vspace{0.20cm} 
                    \\
                    \displaystyle \hspace{3cm} +~ \E\left[ \left(\int_{t-\varepsilon}^t\left| \sqrt{2D[\nu](X^x_{s,u},u)} - \sqrt{2D[\nu](X^x_{s,t-\varepsilon},u)}\, \right|^2du \right)^{1/2}\right] \vspace{0.20cm}
                    \\
                    \displaystyle \hspace{2.8cm} \leq C\varepsilon \E\left[ \sup_{s\leq u\leq t}\left(1 + X^x_{s,u} \right) \right] + C\E\left[ \left(\int_{t-\varepsilon}^t\left| X^x_{s,u} - X^x_{s,t-\varepsilon} \right|du \right)^{1/2}\right] \vspace{0.20cm}
                    \\
                    \displaystyle \hspace{2.8cm} \leq C\varepsilon \E\left[ \sup_{s\leq u\leq t}\left(1 + X^x_{s,u} \right) \right] + C \E\left[\int_{t-\varepsilon}^t\left| X^x_{s,u} - X^x_{s,t-\varepsilon} \right|du \right]^{1/2} \vspace{0.20cm}
                    \\
                    \displaystyle \hspace{2.8cm} \leq C\left\{ {\big(1+x\big)}\varepsilon + \E\left[\int_{t-\varepsilon}^t\left| X^x_{s,u} - X^x_{s,t-\varepsilon} \right|du \right]^{1/2} \right\}.
                \end{array}
            \end{displaymath}
            The last term in the right side is well controlled with (\ref{ineqX}), so that
            \[ \E\left[\int_{t-\varepsilon}^t\left| X^x_{s,u} - X^x_{s,t-\varepsilon} \right|du \right] = \int_{t-\varepsilon}^t\E\left[\left| X^x_{s,u} - X^x_{s,t-\varepsilon} \right| \right]du \leq C{\big( 1+x \big)}\varepsilon^{3/2}, \]
            that implies for the second term in the decomposition (\ref{decompo}) that
            \begin{equation}\label{secondterm}
                \E\left[ \sigma_{t-\varepsilon,t}^m(X^x_{s,t-\varepsilon})\left\{ \Delta^m_h\phi(X^x_{s,t}) - \Delta^m_h\phi(Y^{\varepsilon,x}_{s,t})\right\} \right] \leq C{\big( 1+x^{\alpha + m/2} \big)}\|\phi\|_{\eC^{\alpha}_b(\R)}\varepsilon^{3\alpha/4}.
            \end{equation}
            Finally, by conditioning by $X^x_{t-\varepsilon}$, the third and last term of that decomposition satisfies
            \begin{displaymath}
                \begin{array}{l}
                    \displaystyle \E\left[ \sigma_{t-\varepsilon,t}^m(X^x_{s,t-\varepsilon})\Delta^m_h\phi(Y^{\varepsilon,x}_{s,t}) \right] \vspace{0.20cm} 
                    \\
                    \displaystyle \hspace{1.5cm} = \E\left[ \sigma_{t-\varepsilon,t}^m(X^x_{s,t-\varepsilon})\Delta^m_h\phi\left(X^x_{s,t-\varepsilon} + \int_{t-\varepsilon}^t\sqrt{2D[\nu](X^x_{s,t-\varepsilon},u)}dW_u \right) \right] \vspace{0.20cm}
                    \\
                    \displaystyle \hspace{1.5cm} = \E\left\{\E\left[ \sigma_{t-\varepsilon,t}^m(y)\Delta^m_h\phi\left(y + \int_{t-\varepsilon}^t\sqrt{2D[\nu](y,u)}dW_u \right) \right]_{\big|y = X^x_{s,t-\varepsilon}} \right\}
                \end{array}
            \end{displaymath}
            The expression inside the second expectation is zero if $y=0$, and for $y>0$ it satisfies
            \[ \int_{t-\varepsilon}^t\sqrt{2D[\nu](y,u)}dW_u \thicksim \iN(0,2\varepsilon\sigma^2_{t-\varepsilon,t}(y)). \]
            Using the function 
            \[ g^{\varepsilon,y}_{t}(z) = \frac{1}{2\sigma_{t-\varepsilon,t}(y)\sqrt{\pi\varepsilon}}\exp\left[ -\frac{x^2}{4\varepsilon\sigma^2_{t-\varepsilon,t}(y)} \right] \]
            and the properties 3. and 4. of Lemma \ref{propD} (see Appendix \ref{appC}), that implies that
            \begin{displaymath}
                \begin{array}{l}
                    \displaystyle \E\left[ \sigma_{t-\varepsilon,t}^m(y)\Delta^m_h\phi\left(y + \int_s^t\sqrt{2D[\nu](y,u)}dW_u \right) \right] = \sigma^m_{t-\varepsilon,t}(y) \int_{-\infty}^{+\infty} \Delta^m_h\phi\left(y + z \right) g^{\varepsilon,y}_{t}(z)dz  \vspace{0.20cm}
                    \\
                    \displaystyle \hspace{7.9cm} = \sigma^m_{t-\varepsilon,t}(y) \int_{-\infty}^{+\infty} \phi\left(y + z \right) \Delta^m_{-h}g^{\varepsilon,y}_{t}(z)dz \vspace{0.20cm}
                    \\
                    \displaystyle \hspace{7.9cm} \leq \sigma^m_{t-\varepsilon,t}(y) \|\phi\|_{\infty}\|\Delta^m_{-h}g^{\varepsilon,y}_{t}\|_{L^1} \vspace{0.30cm}
                    \\
                    \displaystyle \hspace{7.9cm} \leq \sigma_{t-\varepsilon,t}^m(y) |h|^m \|\phi\|_{\eC^{\alpha}_b(\R)}\|\partial_z^m g^{\varepsilon,y}_{t}\|_{L^1}.
                \end{array}
            \end{displaymath}
            Further, 
            \[ \|\partial_z^mg^{\varepsilon,y}_t\|_{L^1} \leq \frac{C_m}{\varepsilon^{m/2}\sigma^m_{t-\varepsilon,t}(y)}  \]
            (see Lemma \ref{dernorm} in Appendix \ref{appD}), and then
            \[ \E\left[ \sigma_{t-\varepsilon,t}^m(y)\Delta^m_h\phi\left(y + \int_s^t\sqrt{2D[\nu](y,u)}dW_u \right) \right] \leq C_m |h|^m \|\phi\|_{\iC^{\alpha}_b(\R)} \varepsilon^{-m/2}. \]
            The following control on the third term of the decomposition (\ref{decompo}) follows :
            \begin{equation}\label{thirdterm}
                \E\left[ \sigma_{t-\varepsilon,t}^m(X^x_{s,t-\varepsilon})\Delta^m_h\phi(Y^{\varepsilon,x}_{s,t}) \right] \leq C |h|^m\|\phi\|_{\iC^{\alpha}_b(\R)}\varepsilon^{-m/2}.
            \end{equation}
            Mixing (\ref{firsterm}), (\ref{secondterm}) and (\ref{thirdterm}), it follows from the decomposition (\ref{decompo}) that
            \[ \E\left[ \sigma^m_{t-\varepsilon,t}(X^x_{s,t})\Delta^m_h\phi(X^x_{s,t}) \right] \leq C\bigg(1 + x^{\alpha+m/2}\bigg)\|\phi\|_{\iC^{\alpha}_b(\R)}\left[ |h|^{\alpha}\varepsilon^{1/4} + \varepsilon^{3\alpha/4} + |h|^m\varepsilon^{-m/2} \right], \]
            and then with (\ref{ffterm}), we obtain
            \begin{displaymath}
                \begin{array}{l}
                    \displaystyle \E\left[ D^{m/2}[\nu](X^x_{s,t},t)\Delta^m_h\phi(X^x_{s,t}) \right] \leq C{\big( 1 + x^{\alpha+m/2} + x^{\frac{m+k-1}{2}} \big)}\|\phi\|_{\eC^{\alpha}_b(\R)}\left[ |h|^{\alpha}\varepsilon^{\frac{1}{2}(\beta\wedge\frac{1}{2})} + \varepsilon^{3\alpha/4} + |h|^m\varepsilon^{-m/2} \right]. 
                \end{array}
            \end{displaymath}
            We choose for $h\neq 0$
            \[ \varepsilon = (t-s) \frac{|h|^{\frac{2m}{m + 3\alpha/2}}}{ 1+|h|^{\frac{2m}{m + 3\alpha/2}} } \in\, ]0,t-s[. \]
            Then since $|h|\leq 1$, we get with (\ref{constabm})
            \begin{displaymath}
                \begin{array}{l}
                    \displaystyle \E\left[ D^{m/2}[\nu](X^x_{s,t},t)\Delta^m_h\phi(X^x_{s,t}) \right] \vspace{0.20cm} 
                    \\
                    \displaystyle \hspace{1cm} \leq C{\big(1+x^{c_{\alpha,k}+m/2}\big)}\|\phi\|_{\eC^{\alpha}_b(\R)}\left[ |h|^{\alpha}\left( (t-s) \frac{|h|^{\frac{2m}{m + 3\alpha/2}}}{ 1+|h|^{\frac{2m}{m + 3\alpha/2}} } \right)^{\frac{1}{2}(\beta\wedge\frac{1}{2})} \right. \vspace{0.20cm} 
                    \\
                    \displaystyle \hspace{2cm} \left. +\, \left( (t-s) \frac{|h|^{\frac{2m}{m + 3\alpha/2}}}{ 1+|h|^{\frac{2m}{m + 3\alpha/2}} } \right)^{3\alpha/4} + |h|^m\left( (t-s) \frac{|h|^{\frac{2m}{m + 3\alpha/2}}}{ 1+|h|^{\frac{2m}{m + 3\alpha/2}} } \right)^{-m/2}\, \right] \vspace{0.20cm}
                    \\
                    \displaystyle \hspace{1cm} \leq C{\big(1+x^{c_{\alpha,k}+m/2}\big)}\|\phi\|_{\eC^{\alpha}_b(\R)}\bigg[ (t-s)^{\frac{1}{2}(\beta\wedge\frac{1}{2})}|h|^{\alpha + \frac{1}{2}(\beta\wedge\frac{1}{2})\frac{2 m}{m + 3\alpha/2}}  \vspace{0.20cm} 
                    \\
                    \displaystyle \hspace{2cm} +\, (t-s)^{3\alpha/4}|h|^{\frac{3\alpha}{4}\frac{2 m}{m + 3\alpha/2}} + \left(\frac{2}{t-s}\right)^{m/2}|h|^{m-\frac{m}{2}\frac{2 m}{m + 3\alpha/2}} \bigg]  \vspace{0.20cm}
                    \\
                    \displaystyle \hspace{1cm} \leq C\left( 1+(t-s)^{-m/2}\right){\big(1+x^{c_{\alpha,k}+m/2}\big)}\|\phi\|_{\eC^{\alpha}_b(\R)}\left[ |h|^{\alpha + \frac{1}{2}(\beta\wedge\frac{1}{2})\frac{2 m}{m + 3\alpha/2}} + |h|^{\frac{3\alpha}{4}\frac{2 m}{m + 3\alpha/2}} + |h|^{m-\frac{m}{2}\frac{2 m}{m + 3\alpha/2}} \right]  \vspace{0.20cm}
                    \\
                    \displaystyle \hspace{1cm} \leq C\left( 1+(t-s)^{-m/2}\right)\big(1+x^{c_{\alpha,k}+m/2}\big)\|\phi\|_{\eC^{\alpha}_b(\R)}\left[ |h|^{\alpha + (\beta\wedge\frac{1}{2})\frac{2 m}{2m + 3\alpha}} + |h|^{\frac{3\alpha m}{2m + 3\alpha}} \right] \vspace{0.20cm}
                    \\
                    \displaystyle \hspace{1cm} \leq C\left( 1+(t-s)^{-m/2}\right)\big(1+x^{c_{\alpha,k}+m/2}\big)\|\phi\|_{\eC^{\alpha}_b(\R)}\left[ |h|^{\alpha + (\beta\wedge\frac{1}{2})\frac{2 m}{2m + 3\alpha}} + |h|^{\alpha + \alpha\,\frac{m-3\alpha}{2m + 3\alpha}} \right].
                \end{array}
            \end{displaymath}
            It follows with (\ref{constabm}) that for $m>3\alpha$,
            \begin{equation}\label{bounddens}
                \E\big[ D^{m/2}[\nu](X^x_{s,t},t)\Delta^m_h\phi(X^x_{s,t}) \big] \leq C\left( 1+(t-s)^{-m/2}\right)\big(1+x^{c_{\alpha,k}+m/2}\big)\|\phi\|_{\eC^{\alpha}_b(\R)} |h|^{\alpha + \frac{2 m}{2m + 3\alpha}\eta_{\alpha,\beta,m}}
            \end{equation}
            and for $m> 3\alpha$, we have
            \[ \alpha < \alpha + \frac{2 m}{2m + 3\alpha}\eta_{\alpha,\beta,m} < m. \]
            We deduce from Lemma \ref{lemcond} (see Appendix \ref{appC}) and hypothesis (A.2) in Assumption \ref{asseps} that the random variable $X^x_{s,t}$ admits a density with respect to the Lebesgue measure on $\R^*_+$. In addition, if we denote by $y\mapsto p^{\nu}_{s,t}(x,y)$ its density, then the function $y\mapsto D^{m/2}[\nu](y,t)p^{\nu}_{s,t}(x,y)$ is in the Besov space $\iB^{\frac{2 m}{2m + 3\alpha}\eta_{\alpha,\beta,m} }_{1,\infty}(\R_+^*)$ and satisfies
            \[ \left\| D^{m/2}[\nu](\cdot,t)p^{\nu}_{s,t}(x,\cdot) \right\|_{\iB^{\frac{2 m}{2m + 3\alpha}\eta_{\alpha,\beta,m} }_{1,\infty}(\R_+^*)} \leq C\left( 1+(t-s)^{-m/2}\right)\big(1+x^{c_{\alpha,k}+m/2}\big) \]
            that implies the uniform bound (\ref{besovdens}).
            
        \end{proof}
        \noindent This result shows that despite the degeneracy of the diffusion coefficient in (\ref{edsf}), the law of the solution of this equation is regularized. This property is very important because it induces a regularization of the measure $\nu_t$ as we will show in the following result.
        \begin{theorem}\label{theodensity}
            Under the assumptions of Theorems \ref{uniqone} or \ref{uniqtwo}, and Assumption \ref{asseps}, the measure $\nu_t$ admits a density $x\mapsto u_t(x)$ with respect to the Lebesgue measure for each $t>0$. In addition, if we use the notation $p^u$ to denote the density $p^{\nu}$ defined in Proposition \ref{sgdensity}, then the couple $(u,R)$ is characterized by the mild system
            \begin{equation}\label{mildfunc}\left\{
                \begin{array}{l}
                    \displaystyle \forall t\in (0,T], \forall x>0, \vspace{0.20cm}
                    \\
                    \displaystyle u_t(x) = \left\langle\nu_0,p_{0,t}^u(\cdot,x) \right\rangle + \int_0^t\int_0^{\infty}\left[b(x',R_s)\iG[p^u_{s,t}(\cdot,x)](x') - d(x')p^{u}_{s,t}(x',x) \right] u_s(x')dx'ds \vspace{0.20cm}
                    \\
                    \displaystyle R_t = R_0 + \int_0^t\left\{ r_{in}-R_s-\int_0^{\infty}\chi(x',R_s)u_s(x')dx'\right\}ds
                \end{array}\right.
            \end{equation}
            and is the unique solution of (\ref{PDE}) in $\eL^{\infty}((0,T],\eL^1(\R_+)\times\R)$, in the sense given by (\ref{determ}) for the initial condition $(\nu_0,R_0)$. 
            
            Furthermore, if the initial condition satisfies
            \begin{equation}\label{condbesov}
                \int_0^{\infty}\big(1+x\big)\nu_0(dx) < \infty,
            \end{equation}
            then we have $u\in\eL^{\infty}\big((0,T],\eL^1(\R_+,(1+x)dx)\big)$ and for any $\lambda\in (0,\frac{5-2\sqrt{6}}{3}]$
            \begin{equation}\label{ineqbesov}
                \sup_{0<t\leq T}t^{1/2}\left\| D^{1/2}(\cdot,R_t)u_t \right\|_{\iB_{1,\infty}^{\lambda}(\R^*_+)} < +\infty.
            \end{equation}
        \end{theorem}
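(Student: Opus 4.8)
The plan is to read off the density directly from the mild formulation (\ref{mild}) by inserting the kernel representation (\ref{semg}) furnished by Proposition \ref{sgdensity}. First I would fix a nonnegative $\phi\in\iC_b(\R_+,\R)$ and substitute $P^{\nu}_{s,t-s}\phi(x')=\int_0^{\infty}p^{\nu}_{s,t}(x',y)\phi(y)\,dy$ into (\ref{mild}). Since $\iG$ acts on the first variable and is linear, one has $\iG[P^{\nu}_{s,t-s}\phi](x')=\int_0^{\infty}\iG[p^{\nu}_{s,t}(\cdot,y)](x')\phi(y)\,dy$, and interchanging the $y$-integration with the $\nu_s(dx')\,ds$ integration rewrites the right-hand side of (\ref{mild}) as $\int_0^{\infty}\phi(y)u_t(y)\,dy$ with
\[
u_t(y)=\big\langle\nu_0,p^{\nu}_{0,t}(\cdot,y)\big\rangle+\int_0^t\!\!\int_0^{\infty}\Big[b[\nu](x',s)\,\iG[p^{\nu}_{s,t}(\cdot,y)](x')-d(x')\,p^{\nu}_{s,t}(x',y)\Big]\nu_s(dx')\,ds.
\]
As this identity holds for all such $\phi$, the measure $\nu_t$ is absolutely continuous with nonnegative density $u_t$, and replacing $\nu_s(dx')$ by $u_s(x')\,dx'$ for $s>0$ (the instant $\{s=0\}$ being negligible in the time integral) yields exactly the mild system (\ref{mildfunc}).

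The Fubini step is where I would be careful. Because $y\mapsto p^{\nu}_{s,t}(x',y)$ is a probability density, one has $\int_0^{\infty}p^{\nu}_{s,t}(x',y)\,dy=1$ and $\int_0^{\infty}|\iG[p^{\nu}_{s,t}(\cdot,y)](x')|\,dy\le 3$, so each term is dominated by $C(\|b\|_{\infty}+\|d\|_{\infty})\int_0^t\langle\nu_s,1\rangle\,ds<\infty$ using Assumption \ref{assf} and $\sup_{s\le T}\langle\nu_s,1\rangle<\infty$; Tonelli then applies to the nonnegative pieces and justifies all the exchanges. Integrating $u_t$ gives $\|u_t\|_{\eL^1}=\langle\nu_t,1\rangle$, whence $u\in\eL^{\infty}((0,T],\eL^1(\R_+))$, and together with the resource equation this produces a solution of (\ref{PDE}) in $\eL^{\infty}((0,T],\eL^1(\R_+)\times\R)$. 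Uniqueness is inherited from Theorem \ref{uniqone} (resp.\ \ref{uniqtwo}): any such function solution defines, via $u_t\,dx$, a weakly continuous measure-valued solution of (\ref{determ}) in the uniqueness class, which is unique, so its density $u$ is unique.

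For the Besov estimate I would specialise Proposition \ref{sgdensity} to $m=1$ and optimise the smoothness exponent $\tfrac{2m}{2m+3\alpha}\eta_{\alpha,\beta,m}$ over $\alpha$. In the regime where the degeneracy term $\tfrac{\alpha(1-3\alpha)}{2}$ in (\ref{constabm}) is the binding one (i.e.\ $\beta$ not limiting), this exponent equals $f(\alpha)=\tfrac{\alpha(1-3\alpha)}{2+3\alpha}$, and solving $f'(\alpha)=0$ gives $\alpha^{*}=\tfrac{\sqrt6-2}{3}$ with $f(\alpha^{*})=\tfrac{5-2\sqrt6}{3}$; smaller $\lambda$ follow by the embedding $\iB^{\lambda'}_{1,\infty}\hookrightarrow\iB^{\lambda}_{1,\infty}$ for $\lambda'>\lambda$. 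I would then multiply each term of (\ref{mildfunc}) by $D^{1/2}[\nu](\cdot,t)$, take the $\iB^{\lambda}_{1,\infty}(\R^*_+)$-norm in $x$, and pull it inside the $\nu_0$-, $dx'$- and $ds$-integrals by Minkowski's integral inequality. Applying (\ref{besovdens}) (with $s=0$ for the initial term and $(t-s)$ for the others), using $(\alpha x')^{c_{\alpha,k}+1/2}\le x'^{c_{\alpha,k}+1/2}$ to absorb the fragmentation kernel, and invoking the moment propagation $\sup_{s\le T}\langle\nu_s,1+x^{c_{\alpha^{*},k}+1/2}\rangle<\infty$ (which follows from (\ref{condbesov}) and Lemma \ref{tailK} passed to the limit, the relevant moment being $\le1$ when $c_{\alpha^{*},k}\le\tfrac12$), the three contributions are bounded by $C\,t^{-1/2}$, $C\sqrt t$ and $C\sqrt t$, the last two arising from $\int_0^t(t-s)^{-1/2}\,ds=2\sqrt t$. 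Multiplying through by $t^{1/2}$ gives a bound uniform on $(0,T]$, which is (\ref{ineqbesov}); the same moment control yields $u\in\eL^{\infty}((0,T],\eL^1(\R_+,(1+x)dx))$.

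The main obstacle I anticipate is not the identification of the density, which is essentially immediate once Proposition \ref{sgdensity} is in hand, but the bookkeeping in this last step: one must simultaneously tame the temporal singularity $(t-s)^{-1/2}$ of the kernel against the prefactor $t^{1/2}$, justify Minkowski's inequality for the $\iB^{\lambda}_{1,\infty}$-norm of a measure-indexed integral, and check that the polynomial weight $1+x^{c_{\alpha^{*},k}+1/2}$ produced by (\ref{besovdens}) is $\nu_s$-integrable uniformly in $s$. This is precisely the point where the moment hypothesis (\ref{condbesov}) and the optimal choice $\alpha^{*}=\tfrac{\sqrt6-2}{3}$ must be reconciled, so that the weight exponent stays $\le1$ and the first-moment propagation suffices.
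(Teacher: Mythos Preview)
Your proposal is correct and follows essentially the same route as the paper: the mild formulation plus the kernel from Proposition \ref{sgdensity} to identify $u_t$, then (\ref{besovdens}) with $m=1$ pushed through a Minkowski-type inequality (the paper records this as Lemma \ref{appendBes}) and optimised over $\alpha\in(0,1/3)$ to reach $\lambda\le\tfrac{5-2\sqrt6}{3}$. The only point the paper makes more explicit is the first-moment propagation $\sup_{s\le T}\langle\nu_s,1+x\rangle<\infty$: since Assumption \ref{assK} does not by itself give $\sup_K\E\langle\nu^K_0,1+x\rangle<\infty$, the paper constructs a specific approximating sequence $\nu^K_0$ (i.i.d.\ atoms drawn from $\nu_0/\langle\nu_0,1\rangle$, so that (\ref{condbesov}) transfers to $\sup_K\E\langle\nu^K_0,1+x\rangle<\infty$), applies Lemma \ref{tailK}(2), and then passes to the limit by Fatou and monotone convergence --- a step you should spell out rather than invoke as ``Lemma \ref{tailK} passed to the limit''.
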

        \medskip
        \begin{proof} 
            For a given non negative function $\phi\in\iC_b(\R_+,\R_+)$ it follows from the mild formulation that
            \begin{displaymath}
                \begin{array}{l}
                    \displaystyle \langle\nu_t,\phi\rangle = \int_0^{\infty}\int_0^{\infty}\phi(y)p^{\nu}_{0,t}(x,y)dy\nu_0(dx) \vspace{0.20cm} 
                    \\
                    \displaystyle \hspace{1.5cm} + \int_0^t\int_0^{\infty}\bigg\{ b[\nu](x,s)\left[ - \int_0^{\infty}\phi(y)p^{\nu}_{s,t}(x,y)dy + 2\int_0^{1}\int_0^{\infty}\phi(y)p^{\nu}_{s,t}(\alpha x,y)dy M(d\alpha) \right] \vspace{0.20cm} 
                    \\
                    \displaystyle \hspace{7cm} -~ d(x)\int_0^{\infty}\phi(y)p^{\nu}_{s,t}(x,y)dy \bigg\}\nu_s(dx)ds \vspace{0.20cm}
                    \\
                    \displaystyle \hspace{1.2cm} \leq \int_0^{\infty}\underbrace{\left( \int_0^{\infty}p^{\nu}_{0,t}(x,y)\nu_0(dx) + 2\|b\|_{\infty}\int_0^t\int_0^{\infty}\int_0^1p^{\nu}_{s,t}(\alpha x,y)M(d\alpha)\nu_s(dx)ds \right)}_{H^{\nu}_t(y)}\phi(y)dy
                \end{array}
            \end{displaymath}
            In addition, the function $y\mapsto H^{\nu}_t(y)$ is non negative and belongs to $\eL^1(\R_+)$. Indeed,
            \begin{displaymath}\begin{array}{l}
                \displaystyle \int_0^{\infty}H^{\nu}_t(y)dy = \int_0^{\infty}\int_0^{\infty}p^{\nu}_{0,t}(x,y)dy\,\nu_0(dx) \vspace{0.20cm} 
                \\
                \displaystyle \hspace{3cm} +\, 2\|b\|_{\infty}\int_0^t\int_0^{\infty}\int_0^1\int_0^{\infty}p^{\nu}_{s,t}(\alpha x,y)dy\,M(d\alpha)\nu_s(dx)ds \vspace{0.20cm}
                \\
                \displaystyle \qquad\qquad\qquad = \langle\nu_0,1\rangle + 2\|b\|_{\infty}\int_0^t\langle\nu_s,1\rangle ds < +\infty.
            \end{array}
            \end{displaymath}
            We deduce that for all $t>0$, the measure $\nu_t$ is absolutely continuous with respect to the Lebesgue measure. We denote by $u_t$ its density, then Theorems \ref{uniqone} and \ref{uniqtwo} imply that $u\in \eL^{\infty}((0,T],\eL^1(\R_+))$. The mild system (\ref{mildfunc}) directly follows from the mild formulation (\ref{mild}) and the representation (\ref{semg}). Indeed, for all non negative functions $\phi\in\iC_b(\R_+,\R)$ and $t>0$,
            \begin{displaymath}
                \begin{array}{l}
                    \displaystyle \int_0^{\infty}u_t(x)\phi(x)dx = \int_0^{\infty}\int_0^{\infty}\phi(y)p^{u}_{0,t}(x,y)dy\nu_0(dx) \vspace{0.20cm} 
                    \\
                    \displaystyle \hspace{0.5cm} + \int_0^t\int_0^{\infty}\bigg\{ b(x,R_s)\left[ - \int_0^{\infty}\phi(y)p^{u}_{s,t}(x,y)dy + 2\int_0^{1}\int_0^{\infty}\phi(y)p^{u}_{s,t}(\alpha x,y)dyM(d\alpha) \right] \vspace{0.20cm} 
                    \\
                    \displaystyle \hspace{6.5cm} -~ d(x)\int_0^{\infty}\phi(y)p^{u}_{s,t}(x,y)dy \bigg\}u_s(x)dxds \vspace{0.20cm}
                    \\
                    \displaystyle \hspace{0.5cm} = \int_0^{\infty}\left( \int_0^{\infty}p^{u}_{0,t}(x,y)\nu_0(dx)\right)\phi(y)dy  \vspace{0.20cm}
                    \\
                    \displaystyle \qquad\quad + \int_0^{\infty}\bigg(\int_0^t\int_0^{\infty}\bigg\{ b(x,R_s)\left[ - p^{u}_{s,t}(x,y) + 2\int_0^{1}p^{u}_{s,t}(\alpha x,y)M(d\alpha) \right] \vspace{0.20cm} 
                    \\
                    \displaystyle \hspace{6.5cm} -\, d(x)p^{u}_{s,t}(x,y) \bigg\}u_s(x)dxds\bigg)\phi(y)dy.
                \end{array}
            \end{displaymath}
            We obtain the mild system (\ref{mildfunc}), that implies for any $(x,t)\in\R^*_+\times(0,T]$ and any integer $m\geq 1$, that
            \begin{displaymath}
                \begin{array}{l}
                    \displaystyle D^{m/2}[\nu](x,t)u_t(x) = \left\langle\nu_0,\tilde{\sigma}_t^{m}(x)p_{0,t}^u(\cdot,x) \right\rangle \vspace{0.20cm}
                    \\
                    \displaystyle \hspace{1cm} +~\int_0^t\int_0^{\infty}\left[b(x',R_s)\iG[D^{m/2}[\nu](x,t)p^u_{s,t}(\cdot,x)](x') - d(x')D^{m/2}[\nu](x,t)p^{u}_{s,t}(x',x) \right] u_s(x')dx'ds.
                \end{array}
            \end{displaymath}
            Then, it follows from Lemma \ref{appendBes} that for any $0<\alpha<1$ such that $m>3\alpha$,
            \begin{displaymath}
                \begin{array}{l}
                    \displaystyle \left\| D^{m/2}[\nu](\cdot,t)u_t\right\|_{\iB_{1,\infty}^{\frac{2m}{2m+3\alpha}\eta_{\alpha,\beta,m}}(\R^*_+)} \leq \int_0^{\infty}\left\|D^{m/2}[\nu](\cdot,t)p_{0,t}^u(x',\cdot)\right\|_{\iB_{1,\infty}^{\frac{2m}{2m+3\alpha}\eta_{\alpha,\beta,m}}(\R^*_+)} \nu_0(dx') \vspace{0.20cm}
                    \\
                    \displaystyle \hspace{2cm} +~\int_0^t\int_0^{\infty}\bigg\{ b(x',R_s)\bigg[ \left\|D^{m/2}[\nu](\cdot,t)p^u_{s,t}(x',\cdot)\right\|_{\iB_{1,\infty}^{\frac{2m}{2m+3\alpha}\eta_{\alpha,\beta,m}}(\R^*_+)} \vspace{0.20cm}
                    \\
                    \displaystyle \hspace{4cm} +~ 2\int_0^1\left\| D^{m/2}[\nu](\cdot,t)p^u_{s,t}(\alpha' x',\cdot)\right\|_{\iB_{1,\infty}^{\frac{2m}{2m+3\alpha}\eta_{\alpha,\beta,m}}(\R^*_+)}M(d\alpha') \bigg] \vspace{0.20cm} 
                    \\
                    \displaystyle \hspace{4.5cm} +~ d(x')\left\|D^{m/2}[\nu](\cdot,t)p^{u}_{s,t}(x',\cdot)\right\|_{\iB_{1,\infty}^{\frac{2m}{2m+3\alpha}\eta_{\alpha,\beta,m}}(\R^*_+)} \bigg\} u_s(x')dx'ds.
                \end{array}
            \end{displaymath}
            Using (\ref{besovdens}), this implies that 
            \begin{displaymath}
            \begin{array}{l}
                \displaystyle \left\| D^{m/2}[\nu](\cdot,t)u_t\right\|_{\iB_{1,\infty}^{\frac{2m}{2m+3\alpha}\eta_{\alpha,\beta,m}}(\R^*_+)} \leq C\bigg\{t^{-m/2}\int_0^{\infty}\big( 1+x^{c_{\alpha,k}+m/2} \big) \nu_0(dx) 
                \\
                \displaystyle \hspace{7cm} +\, \int_0^t(t-s)^{-m/2}\int_0^{\infty} \big( 1+x^{c_{\alpha,k}+m/2} \big) u_s(x)dxds \bigg\}.
            \end{array}
            \end{displaymath}
            We take $m=1$ in order to ensure that $\int_0^t(t-s)^{-m/2}ds<\infty$. Then we have the condition $0<\alpha<1/3$. Let us also notice that under the assumptions of Theorems \ref{uniqone} and \ref{uniqtwo}, we have $k=0$, $\beta=1$ and then $c_{\alpha,0} = \alpha$, $\eta_{\alpha,1,1} = \frac{1-3\alpha}{2/\alpha}$. It follows that
            \begin{displaymath}
            \begin{array}{l}
                \displaystyle \left\| D^{1/2}[\nu](\cdot,t)u_t\right\|_{\iB_{1,\infty}^{\frac{\alpha(1-3\alpha)}{2+3\alpha}}(\R^*_+)} \leq C\bigg\{t^{-1/2}\int_0^{\infty}\big( 1+x^{\alpha+1/2} \big) \nu_0(dx) 
                \\
                \displaystyle \hspace{7cm} +\, \int_0^t(t-s)^{-1/2}\int_0^{\infty} \big( 1+x^{\alpha+1/2} \big) u_s(x)dxds \bigg\} \vspace{0.20cm}
                \\
                \displaystyle \hspace{4.75cm} \leq C\bigg\{ t^{-1/2}\int_0^{\infty}\big( 1+x^{\alpha+1/2} \big) \nu_0(dx) \vspace{0.20cm} 
                \\
                \displaystyle \hspace{7cm} +\, t^{1/2}\sup_{0<s\leq T}\int_0^{\infty} \big( 1+x^{\alpha+1/2} \big) u_s(x)dx \bigg\} \vspace{0.20cm}
                \\
                \displaystyle \hspace{4.75cm} \leq Ct^{-1/2}\bigg\{ \int_0^{\infty}\big( 1+x \big) \nu_0(dx) + t\sup_{0<s\leq T}\int_0^{\infty} \big( 1+x \big) u_s(x)dx \bigg\}.
            \end{array}
            \end{displaymath}
            Since $0<\alpha<1/3$, one can replace $\frac{\alpha(1-3\alpha)}{2+3\alpha}$ in the left term of this inequality by $\lambda$ chosen arbitrarily in the image set of $]0,1/3[$ by the function $\alpha\mapsto \frac{\alpha(1-3\alpha)}{2+3\alpha}$ that is $(0,\frac{5-2\sqrt{6}}{3}]$. It suffices now to show that the supremum in the right term of the last inequality here above is finite in order to conclude that the bound (\ref{ineqbesov}) holds. For that purpose, one can assume that $\sup_{K}\E\big[ \langle\nu^K_0,1+x\rangle \big] < \infty$, it suffices to take the initial condition $\nu^K_0 = \frac{1}{K}\sum_{i\in V^K_0}\delta_{x^i}$ with $\# V^K_0=\lfloor\langle\nu_0,1\rangle K\rfloor$ and $x^1, ..., x^{\# V^K_0}$ i.i.d with law $\frac{1}{\langle\nu_0,1\rangle}\nu_0(dx)$. From Lemma \ref{tailK}, we deduce that
            \[ \sup_{K}\E\bigg[ \sup_{0\leq t\leq T}\big\langle\nu^K_t,1+x\big\rangle \bigg] < \infty. \]
            That implies thanks to the Fatou Lemma that if $\psi(x)$ is an increasing function with a compact support such that $\psi\equiv 1$ on $[0,1/2]$ and $\psi\equiv 0$ on $[1,\infty)$, then for any $n\geq 1$
            \begin{displaymath}
                \begin{array}{l}
                    \displaystyle \int_0^{\infty}(1+x)\psi\bigg(\frac{x}{n}\bigg)u_s(x)dx \leq \liminf_{K\rightarrow\infty}\E\bigg[ \int_0^{\infty}(1+x)\psi\bigg(\frac{x}{n}\bigg)\nu^K_s(dx) \bigg] \vspace{0.20cm}
                    \\
                    \displaystyle \hspace{4.45cm} \leq\, \sup_K\E\bigg[ \sup_{0\leq t\leq T}\big\langle\nu^K_t,1+x\big\rangle \bigg] < \infty.
                \end{array}
            \end{displaymath}
            We conclude by taking the supremum on $s\in(0,T]$ after applying the monotone convergence theorem as $n\rightarrow\infty$, that
            \[ \sup_{0< s\leq T}\int_0^{\infty}(1+x)u_s(x)dx < \infty \]
            which ends the proof.
        \end{proof}
    
    \bigskip
    
    \section*{Acknowledgement}
        I would like to thank Sylvie M\'el\'eard and Carl Graham for their continual guidance during this work. I also thank Sylvie M\'el\'eard for fruitful exchanges and discussions. This work has been supported by the Chair “Modélisation Mathématique et Biodiversité” of Veolia-École Polytechnique-Muséum national d’Histoire naturelle-Fondation X.

%
%
%
\bibliographystyle{plain}
\bibliography{biblio}

    \bigskip

    \section*{Appendix}
    \appendix
    \section{Proof of Theorem \ref{exist}}\label{appA}
        \subsection{Step 1 : Control of tails}
        Since the individual trait is unbounded, we need to get a control on the measure valued process tails. We then have the following lemma that is adapted from \cite{bjour}.
            \begin{lemma}\label{fn}
                Let $T>0$, if a subsequence of $\left\{ \left(\nu^K_t\right)_{t\geq 0},K>1 \right\}$ converges in law in the Skorohod space $\D([0,T],(\iM_F(\R_+),v))$ then its limit $(\nu_t)_{t\in [0,T]}$ satisfies
                \begin{equation}\label{contr}
                    \E\left\{ \sup_{0\leq t\leq T}\left[\left\langle\nu_t,1\right\rangle^{1+\varrho} \right] \right\} < \infty.
                \end{equation}
                In addition, there exists a decreasing sequence of inscreasing functions $(f_n)_{n\geq 1}\subset\iC_b^2(\R_+,\R)$ such that $f_n=0$ on $[0,n/2]$ and $f_n = 1$ on $[n,\infty)$, which satisfies
                \begin{equation}\label{limfn}
                    \lim_{n\rightarrow\infty}\limsup_{K\rightarrow\infty}\E\left\{ \sup_{0\leq t\leq T}\left\langle\nu^K_t,f_n\right\rangle \right\} = 0.
                \end{equation}
            \end{lemma}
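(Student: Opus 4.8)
The plan is to prove the two assertions separately, both resting on Lemma \ref{tailK}, which under Assumption \ref{assK}(A.1) (with $q=1+\varrho$) gives $C_0:=\sup_K\E[\sup_{0\le t\le T}\langle\nu^K_t,1\rangle^{1+\varrho}]<\infty$, hence also its first-moment analogue. I fix once and for all a template $f\in\iC^2_b(\R_+,\R)$ that is non-decreasing, vanishes on $[0,1/2]$ and equals $1$ on $[1,\infty)$, and set $f_n(x)=f(x/n)$. Then $(f_n)$ is a decreasing sequence of non-decreasing functions in $\iC^2_b(\R_+,\R)$ with $f_n=0$ on $[0,n/2]$, $f_n=1$ on $[n,\infty)$, and the scaling provides the two facts I will use repeatedly: $\|xf'_n\|_\infty=\sup_y yf'(y)$ is finite and independent of $n$, while $f'_n,f''_n$ are supported in $[n/2,n]$ with $\|f'_n\|_\infty\le C/n$ and $\|f''_n\|_\infty\le C/n^2$.

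For the moment bound (\ref{contr}) I would argue by lower semicontinuity, avoiding the continuity of the limit (only established later in the proof of Theorem \ref{exist}). Pick $\phi_R\in\iC_c(\R_+,\R)$ with $0\le\phi_R\le1$ and $\phi_R\uparrow1$ pointwise as $R\to\infty$. For each fixed $R$ the functional $\mu\mapsto\sup_{t\le T}\langle\mu_t,\phi_R\rangle$ is lower semicontinuous on $\D([0,T],(\iM_F(\R_+),v))$, which follows from the definition of the Skorohod topology together with the $d_v$-continuity of $\mu\mapsto\langle\mu,\phi_R\rangle$ for $\phi_R\in\iC_c$; hence so is its $(1+\varrho)$-th power. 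Applying the Portmanteau theorem to the convergent subsequence and using $\phi_R\le1$, I get $\E[(\sup_{t\le T}\langle\nu_t,\phi_R\rangle)^{1+\varrho}]\le\liminf_j\E[(\sup_{t\le T}\langle\nu^{K_j}_t,\phi_R\rangle)^{1+\varrho}]\le C_0$, uniformly in $R$. Since $\langle\nu_t,\phi_R\rangle\uparrow\langle\nu_t,1\rangle$ and suprema in $t$ and $R$ commute, monotone convergence in $R$ yields (\ref{contr}).

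With $(f_n)$ constructed, the content of (\ref{limfn}) is the vanishing of the double limit. Applying the decomposition (\ref{doobM}) to $f_n$ (admissible since $xf'_n,xf''_n$ are bounded) and taking suprema, I bound $\sup_{t\le T}\langle\nu^K_t,f_n\rangle$ by $\langle\nu^K_0,f_n\rangle$, the time integral of $|\langle\nu^K_s,L_{R^K_s}f_n+b(\cdot,R^K_s)\iG[f_n]-df_n\rangle|$, and $\sup_{t\le T}|M^{K,f_n}_t|$. The martingale is harmless: by (\ref{diff}) and the first-moment bound $\E[\langle M^{K,f_n}\rangle_T]\le C_T(1+\|xf'_n\|_\infty^2)/K$, so Doob's inequality makes $\E[\sup_t|M^{K,f_n}_t|]$ vanish as $K\to\infty$, uniformly in $n$. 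For the drift, the growth bounds $|\zeta(x,r)|\le C(1+x)$ and $|D(x,r)|\le Cx$ (using $D(0,r)=0$), the boundedness of $b,d$, the estimate $|\iG[f_n]|\le 3$, and the fact that $\iG[f_n]$ vanishes on $[0,n/2]$, combine with the scaling of $f_n$ to give the $n$-uniform bound
\[ \big|\langle\nu^K_s,L_{R^K_s}f_n+b(\cdot,R^K_s)\iG[f_n]-df_n\rangle\big|\le C\,\langle\nu^K_s,1_{\{x\ge n/2\}}\rangle\le C\,\langle\nu^K_s,f_{\lfloor n/2\rfloor}\rangle, \]
the last step because $1_{\{x\ge n/2\}}\le f_{\lfloor n/2\rfloor}$. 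The initial term is controlled by passing to the limit: $\nu^K_0\to\nu_0$ in $(\iM_F(\R_+),w)$ together with the uniform integrability coming from $C_0$ gives $\limsup_K\E[\langle\nu^K_0,f_n\rangle]=\E[\langle\nu_0,f_n\rangle]$, which tends to $0$ by dominated convergence since $f_n\downarrow0$ pointwise and $\langle\nu_0,1\rangle$ is integrable.

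The main obstacle is that these estimates alone only close a Gronwall argument on a short interval, since the drift bound involves the coarser cutoff $f_{\lfloor n/2\rfloor}$ and the resulting recursion in $n$ carries a constant $CT$ that cannot be iterated away when $CT\ge1$. I would resolve this by a finite induction over short subintervals. Fix $\delta>0$ with $C\delta<1$ and a partition $0=t_0<\dots<t_N=T$ of mesh $\delta$, and set $A_i(n)=\limsup_K\E[\sup_{t\le t_i}\langle\nu^K_t,f_n\rangle]$. The base case $A_0(n)=\E[\langle\nu_0,f_n\rangle]\to0$ is the initial estimate above. Writing the decomposition on $[t_i,t_{i+1}]$, taking $\limsup_K$ (the martingale increment dropping out, and reverse Fatou being legitimate since the integrands are dominated by $C_0$), and using $\langle\nu^K_{t_i},f_n\rangle\le\sup_{t\le t_i}\langle\nu^K_t,f_n\rangle$, I obtain $A_{i+1}(n)\le2A_i(n)+C\delta\,A_{i+1}(\lfloor n/2\rfloor)$. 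Letting $n\to\infty$ and using that $A_{i+1}$ is non-increasing in $n$, the inductive hypothesis $A_i(n)\to0$ forces $A_{i+1}(\infty)\le C\delta\,A_{i+1}(\infty)$, whence $A_{i+1}(\infty)=0$ because $C\delta<1$. After $N$ steps this is precisely (\ref{limfn}).
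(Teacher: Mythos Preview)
Your proof is correct, and for (\ref{contr}) it is essentially the paper's argument (approximate $1$ by compactly supported functions, pass to the limit via lower semicontinuity/Fatou, then monotone convergence).

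For (\ref{limfn}) you take a genuinely different route. The paper does \emph{not} bound the drift in absolute value; instead it exploits the sign structure. Since $-b(\cdot,R^K_s)f_n - d\,f_n \le 0$ and, because $f_n$ is increasing and $\alpha\in(0,1)$, one has $\int_0^1 f_n(\alpha x)\,M(d\alpha)\le f_n(x)$, the jump part of the drift is bounded above by $2\|b\|_\infty\langle\nu^K_u,f_n\rangle$ with the \emph{same} $f_n$. The derivative terms are isolated into $A^{K,f_n}_T=C\int_0^T\langle\nu^K_u,(1+x)(f'_n+|f''_n|)\rangle\,du$, whose integrand is continuous with compact support in $[n/2,n]$. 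A single Gronwall on $[0,T]$ then gives
\[
\E\Big[\sup_{t\le T}\langle\nu^K_t,f_n\rangle\Big]\le e^{2\|b\|_\infty T}\,\E\Big[\langle\nu^K_0,f_n\rangle+A^{K,f_n}_T+\sup_{u\le T}|M^{K,f_n}_u|\Big],
\]
and one lets $K\to\infty$ (martingale term vanishes; the other two pass to the limit by vague convergence and uniform integrability), then $n\to\infty$. Your approach, by taking absolute values, loses this sign structure and is forced to trade $f_n$ for the coarser $f_{\lfloor n/2\rfloor}$, which you then repair with the short-time partition and induction in $i$. This works, but it is more involved; its (mild) advantage is that it does not rely on the monotonicity trick $f_n(\alpha x)\le f_n(x)$, so it would survive in models where the jump operator does not contract the trait.
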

            \begin{proof}[Proof of Lemma \ref{fn}]
                In order to show (\ref{contr}), we approximate the functions $x\mapsto 1$ by an increasing sequence of continuous and compact supported functions in order to use the convergence assumption for the vague topology. We then introduce the uniformly bounded sequence of continuous functions defined for $n\in\N^{\star}$ by
                \begin{displaymath}\phi_n(x) = \left\{\begin{array}{ll}
                    \displaystyle 1 &\textrm{ if }0\leq x < n-1, \vspace{0.20cm}
                    \\
                    \displaystyle n-x &\textrm{ if }n-1\leq x < n, \vspace{0.20cm}
                    \\
                    \displaystyle 0 &\textrm{ if }x\geq n.
                \end{array}\right.
                \end{displaymath}
                that converges increasingly pointwise towards the constant function $x\mapsto 1$. It follows from the Fatou lemma that holds thanks to the convergence of the sequence of measure valued process for the vague topology, that we have for all $n\in\N$
                \begin{displaymath}\begin{array}{l}
                    \displaystyle \E\left\{ \sup_{0\leq t\leq T}\left\langle\nu_t,\phi_n\right\rangle^{1+\varrho} \right\} \leq \liminf_{K\rightarrow\infty}\E\left\{ \sup_{0\leq t\leq T}\left\langle\nu^K_t,\phi_n\right\rangle^{1+\varrho} \right\} \leq \liminf_{K\rightarrow\infty}\E\left\{ \sup_{0\leq t\leq T}\left\langle\nu^K_t,1\right\rangle^{1+\varrho} \right\}.
                \end{array}
                \end{displaymath}
                As $n\rightarrow\infty$ in this inequality, we obtain by monotony
                \begin{displaymath}
                    \E\left\{ \sup_{0\leq t\leq T}\left\langle\nu_t,1\right\rangle^{1+\varrho} \right\} \leq \sup_K\E\left\{ \sup_{0\leq t\leq T}\left\langle\nu^K_t,1\right\rangle^{1+\varrho} \right\} <\infty.
                \end{displaymath}
                In order to show (\ref{limfn}), we approximate the indicator of the outside of the compacts by a non-increasing and uniformly bounded sequence of regular fonctions. This allows us to control the tails of the distributions $\{\iQ^K,K>1\}$. Let us then introduce an increasing function $f\in\iC^2(\R_+,\R)$ such that $f\equiv 0$ on $[0,1/2]$ and $f\equiv 1$ on $[1,\infty)$. We set
                \begin{equation}\label{funcfn}
                    f_n(x) = f\big( \frac{x}{n} \big)~,~\forall n\geq 1
                \end{equation}
                which are indeed in $\iC^2_b(\R_+,\R)$ with uniformly bounded derivatives. It then follows that for all $s\leq t\leq T$
                \begin{displaymath}\begin{array}{l}
                    \displaystyle \left\langle\nu^K_s,f_n\right\rangle \leq \left\langle\nu^K_0,f_n\right\rangle + M^{K,f_n}_s + \int_0^s\int_0^{\infty}\left\{ \zeta(x,R^K_u)f'_n(x) + D(x,R^K_s)f''_n(x) \right\}\nu^K_u(dx)du \vspace{0.15cm}
                    \\
                    \displaystyle \qquad\qquad~ + 2\int_0^s\int_0^{\infty}\left\{b(x,R^K_u)\int_0^1f_n(\alpha x)M(d\alpha) \right\}\nu^K_u(dx)du \vspace{0.20cm}
                    \\
                    \displaystyle \qquad\qquad \leq \left\langle\nu^K_0,f_n\right\rangle + A^{K,f_n}_T + 2\|b\|_{\infty}\int_0^s\left\langle\nu^K_u,f_n\right\rangle du + \sup_{0\leq u\leq T}\left|M^{K,f_n}_u\right|
                \end{array}
                \end{displaymath}
                where
                \begin{displaymath}
                    A^{K,f_n}_T := C\int_0^T \left\langle\nu^K_u,(1+x)(f'_n + \left|f''_n\right|)\right\rangle du.
                \end{displaymath}
                The sequence $(A^{K,f_n}_T)_{K>1}$ is uniformly integrable according to $K$ and converges in law towards the random variable $C\int_0^T \left\langle\nu_u,(1+x)(f'_n + \left|f''_n\right|)\right\rangle du$ as $K\rightarrow\infty$ because the function $x\mapsto (1+x)(f'_n + \left|f''_n\right|)(x)$ is continuous with compact support. We have
                \begin{displaymath}
                    \E\left\{\sup_{0\leq s\leq t}\left\langle\nu^K_s,f_n\right\rangle\right\} \leq \E\left\{\left\langle\nu^K_0,f_n\right\rangle + A^{K,f_n}_T + \sup_{0\leq u\leq T}\left|M^{K,f_n}_u\right| \right\} + 2\|b\|_{\infty}\int_0^t\E\left\{\sup_{0\leq u\leq s}\left\langle\nu^K_u,f_n\right\rangle\right\} ds
                \end{displaymath}
                that induces thanks to Gronwall's lemma that
                \begin{displaymath}
                    \E\left\{\sup_{0\leq s\leq t}\left\langle\nu^K_s,f_n\right\rangle\right\} \leq \E\left\{\left\langle\nu^K_0,f_n\right\rangle + A^{K,f_n}_T + \sup_{0\leq u\leq T}\left|M^{K,f_n}_u\right| \right\}e^{2\|b\|_{\infty}t}.
                \end{displaymath}
                As $K\rightarrow\infty$, it follows from (\ref{diff}) and the Doob inequality that the right term with the martingale converges towards $0$. In addition the remaining terms in the expectation are uniformly integrable and converge in law, then
                \begin{displaymath}
                \begin{array}{l}
                    \displaystyle \lim_{n\rightarrow\infty}\limsup_{K\rightarrow\infty}\E\left\{\sup_{0\leq s\leq T}\left\langle\nu^K_s,f_n\right\rangle\right\} 
                    \\
                    \displaystyle \hspace{1cm} \leq e^{2\|b\|_{\infty}T}\lim_{n\rightarrow\infty}\E\left\{ \left\langle\nu_0,f_n\right\rangle + C\int_0^T\int_0^{\infty}\frac{(1+x)}{n}\bigg(f'\big(\frac{x}{n}\big) + \frac{1}{n}\big|f''\big|\big(\frac{x}{n}\big) \bigg)\nu_u(dx) du \right\} = 0.
                \end{array}
                \end{displaymath}
            \end{proof}
            
        \subsection{Step 2 : Tightness}
            Given the above Lemma \ref{fn}, the tightness of the sequence of laws $\left\{\iQ^K = \iL(\nu^K,R^K),K>1 \right\}$ follows. It is adapted from \cite{sylVin}, Theorem 7.4.
            \begin{prop}\label{tight}
                The sequence of laws $\left\{\iQ^K = \iL(\nu^K,R^K),K>1 \right\}$ is tight in the space of probability measures $\iP(\D([0,T],\etr))$ for all $T>0$.
            \end{prop}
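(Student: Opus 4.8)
The plan is to prove tightness of each marginal separately and then conclude, since tightness in the product Skorokhod space $\D([0,T],\etr)$ follows from the tightness of $\{\iL(\nu^K),K>1\}$ in $\D([0,T],(\iM_F(\R_+),w))$ and of $\{\iL(R^K),K>1\}$ in $\D([0,T],\R)$ (a standard fact on tightness in product Skorokhod spaces). For the resource, I would exploit that $R^K$ lives in the fixed compact $[0,\bar{R}]$ and that its trajectory is absolutely continuous with
\[ |\dot{R}^K_t| \leq r_{in} + \bar{R} + \|\chi\|_{\infty}\langle\nu^K_t,1\rangle . \]
Combining this with the uniform moment control \eqref{momK} (for $q=1$), for any stopping times $\tau\leq\tau+\theta\leq T$ one bounds $\E|R^K_{\tau+\theta}-R^K_\tau|$ by $C\theta\,(1+\E[\sup_{u\leq T}\langle\nu^K_u,1\rangle])$ uniformly in $K$, so that the Aldous criterion holds and $\{\iL(R^K)\}$ is tight with continuous limit points.

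The heart of the argument is the tightness of $\{\nu^K\}$, which I would obtain in two stages. \emph{Stage 1 (vague topology).} By Roelly's criterion it suffices to show that, for every $f$ in the separating class $\iC^2_c(\R_+,\R)$, the real process $\langle\nu^K_\cdot,f\rangle$ is tight in $\D([0,T],\R)$, the required compact containment being furnished by \eqref{momK} since the sets $\{\mu:\langle\mu,1\rangle\leq M\}$ are vaguely compact. Fixing such an $f$, I would use the Doob--Meyer decomposition \eqref{doobM}. The martingale part is asymptotically negligible: by \eqref{diff} its bracket satisfies $\langle M^{K,f}\rangle_T\leq C_T\,K^{-1}(\|f\|_\infty^2+\|xf'\|_\infty^2)\sup_{u\leq T}\langle\nu^K_u,1\rangle$, which is $O(1/K)$ in expectation, so by Doob's inequality $\sup_t|M^{K,f}_t|\to 0$ in $L^2$. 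For the finite-variation part, using that $f$, $f'$, $f''$ have compact support while $\zeta,D$ have at most linear growth and $b,d,\chi$ are bounded, the integrand $L_{R^K_u}f+b(\cdot,R^K_u)\iG[f]-df$ is a bounded function, whence
\[ \big|V^{K,f}_{\tau+\theta}-V^{K,f}_\tau\big| \leq \int_\tau^{\tau+\theta}\big|\big\langle\nu^K_u,L_{R^K_u}f+b(\cdot,R^K_u)\iG[f]-df\big\rangle\big|\,du \leq C\,\theta\sup_{0\leq u\leq T}\langle\nu^K_u,1\rangle . \]
Together with the pointwise tightness of $\langle\nu^K_t,f\rangle$ (bounded in $L^{1+\varrho}$ by $\|f\|_\infty\langle\nu^K_t,1\rangle$) this verifies the Aldous--Rebolledo conditions, giving the tightness of $\langle\nu^K_\cdot,f\rangle$ and hence, by Roelly's criterion, tightness of $\{\nu^K\}$ in $\D([0,T],(\iM_F(\R_+),v))$.

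\emph{Stage 2 (upgrade to the weak topology).} At this point the limiting values live a priori only in $\D([0,T],(\iM_F(\R_+),v))$, and the total mass $\langle\nu^K,1\rangle$ need not be tight because mass could escape to infinity. I would close this gap with Lemma \ref{fn}: the tail estimate \eqref{limfn} shows precisely that no mass escapes, i.e. the family $\{\nu^K_t\}$ is uniformly tight as a family of measures on $\R_+$. This is the additional ingredient that promotes the tightness from the vague to the weak topology (and renders $\langle\nu^K_\cdot,1\rangle$ tight in $\D([0,T],\R)$); combined with the uniform bound coming from \eqref{contr} it yields tightness of $\{\iL(\nu^K)\}$ in $\D([0,T],(\iM_F(\R_+),w))$, and therefore tightness of $\{\iQ^K\}$ in $\iP(\D([0,T],\etr))$.

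The step I expect to be the main obstacle is the Aldous--Rebolledo control of the finite-variation part in Stage 1 together with its interaction with the passage from the vague to the weak topology: the test functions act through coefficients that are merely bounded and Lipschitz, so the increment estimates must be closed using the moment bounds of Lemma \ref{tailK} (the growth-weighted bound \eqref{momxK} being what is needed should one prefer to run the argument directly with $\iC^2_b$ test functions satisfying $xf',xf''$ bounded rather than with compactly supported $f$). The vague-to-weak upgrade, although delicate in principle, is entirely prepared by the non-escape-of-mass estimate \eqref{limfn}, so that the remaining work is the routine verification of the Aldous moduli and of Roelly's separating-class hypothesis.
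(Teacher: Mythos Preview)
Your proposal is correct and follows essentially the same route as the paper: Roelly's criterion for the vague topology via the Doob--Meyer decomposition \eqref{doobM}, with the martingale part vanishing by \eqref{diff} and the finite-variation part handled by Aldous, then the upgrade to the weak topology using the non-escape-of-mass Lemma~\ref{fn}. The only organizational difference is that you prove tightness of the two marginals $\nu^K$ and $R^K$ separately and then combine, whereas the paper applies Roelly's criterion directly to the pair $(\langle\nu^K,f\rangle,R^K)$ in $\D([0,T],\R^2)$; your product step is legitimate precisely because $R^K$ is $C$-tight (absolutely continuous trajectories), which you note, but you should be aware that marginal tightness does \emph{not} in general imply joint tightness in Skorokhod spaces without such a $C$-tightness hypothesis on one component.
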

            \begin{proof}
                Let us first endow the measure space $\iM_F(\R_+)$ with its vague topology and prove the tightness property for the sequence $\left\{ \iQ^K=\iL\left(\nu^K, R^K\right),K>1 \right\}$. By noticing that $\iC^2_c(\R_+,\R)$ is a dense subspace of $\iC_0(\R_+,\R)$ for the topology of uniform convergence on compact sets, it suffices according to \cite{roel} to show that for each test function $f\in\iC_c^2(\R_+,\R)$ the sequence of processes $\left\{ \left(\left\langle\nu^K_t,f\right\rangle, R^K_t\right)_{t\geq 0},K>1 \right\}$ is tight in the Skorohod space $\D([0,T],\R^2)$. First, remember that we have the decomposition
                \[ \left\langle\nu^K_t,f\right\rangle = \left\langle\nu^K_0,f\right\rangle + V^{K,f}_t + M^{K,f}_t \]
                and thanks to (\ref{diff}),
                \[ \lim_{K\rightarrow\infty}\E\left\langle M^{K,f}\right\rangle_t = 0 \, , \, \forall t\geq 0. \]
                It then directly follows from Doob's inequality that the sequence of martingales $\left\{ M^{K,f},K>1 \right\}$ converges in $\eL^2$ toward $0$ locally uniformly in time. This sequence is then tight in the Skorohod space $\D([0,T],\R)$, and all we need to prove now is the tightness of the other sequences $\{V^{K,f},K>1\}$ and $\{R^K,K>1\}$. For that purpose, we use the Aldous criterion (see \cite{ald}), that consists in showing the uniform controls
                \begin{displaymath}\begin{array}{l}
                    \displaystyle \textrm{(a) } \sup_K\E\left[ \sup_{0\leq t\leq T}\left| V^{K,f}_t \right| \right] < \infty\, , ~~ \textrm{(b) } \sup_K\E\left[ \sup_{0\leq t\leq T}\left|R^K_t\right| \right] < \infty
                \end{array}
                \end{displaymath}
                and showing that the Aldous condition holds for $(R^K_t,V^{K,f}_t)_{t\geq 0}$. The assertions (a) and (b) are immediate. Indeed, $R^K_t \leq r\vee R_0 = \bar{R}$ for any $t\geq 0$ and thanks to the boundedness of the test function,
                \begin{displaymath}
                    \sup_{0\leq t\leq T} \left|V^{K,f}_t\right| \leq C_f\sup_{0\leq t\leq T}\left[\left\langle\nu^K_t,1\right\rangle + \left|M^{K,f}_t\right| \right].
                \end{displaymath}
                Let us now consider $\varepsilon>0$ and a stopping time $\tau$ that satisfies $\tau+\epsilon\leq T$, then 
                \begin{displaymath}\begin{array}{l}
                    \displaystyle \E\left(\left| R^K_{\tau+\varepsilon}-R^K_{\tau} \right|\right) \leq \E\left(\int_{\tau}^{\tau+\varepsilon}\left|r_{in}-R^K_s -  \int_0^{\infty}\chi(x,R^K_s)\nu^K_s(dx)\right|ds\right) \vspace{0.20cm}
                    \\
                    \displaystyle \qquad\qquad\qquad\quad~~ \leq \varepsilon\left[r_{in}+\bar{R} + \|\chi\|_{\infty}\E\left(\sup_{0\leq t\leq T}\left\langle\nu^K_t,1\right\rangle\right)\right].
                \end{array}
                \end{displaymath}
                By a similar computation, we obtain
                \begin{displaymath}
                \E\left(\left|V^{K,f}_{\tau+\varepsilon}-V^{K,f}_{\tau}\right|\right) \leq C(f)\varepsilon\E\left[\sup_{0\leq t\leq T}\left\langle\nu^K_t,1\right\rangle\right],
                \end{displaymath}
                that implies the Aldous condition thanks to Lemma \ref{tailK}. The sequence of law $(\iQ^K)_K$ is then tight and by the Prokhorov's theorem one can extract from each sub-sequence, a sub-sub-sequence that converges in $\iP(\D([0,T],\vag))$. Let us denote by $\iQ$ a limit value of this sequence, and for simplicity of the notation, $(\iQ^K)_K$ a sub-sequence that converges towards $\iQ$. For each $K>1$ we have
                \begin{equation}
                    \sup_{0\leq s\leq T}\sup_{f\in L^{\infty}(\R_+),\|f\|_{\infty}\leq 1}\left| \left\langle\nu^K_{t},f\right\rangle - \left\langle\nu^K_{t-},f\right\rangle \right| \leq 1/K.
                \end{equation}
                We then deduce from the continuity of the mapping $\mu\mapsto\sup_{0\leq t\leq T}\left| \left\langle\mu_{t},f\right\rangle - \left\langle\mu_{t-},f\right\rangle \right|$ for $f\in\iC_c(\iE\times\R_+,\R)$, that the limiting law $\iQ$ only charges the set $\iC([0,T],\vag)$. We are aiming to prove the same result for the weak topology on $\iM_F(\R_+)$ and this is where the sequence of functions introduced in Lemma \ref{fn} is usefull.
                
                Let us denote by $(\nu,R)$ a process of law $\iQ$, then according to the above argument its trajectories are in the space $\iC([0,T],\vag)$. As in \cite{bjour}, it follows from the Fatou lemma that for $n,l\in\N$,
                \begin{displaymath}
                    \E\left[ \sup_{0\leq t\leq T}\left\langle\nu_t,(1-f_l)f_n\right\rangle \right] \leq \liminf_{K\rightarrow\infty}\E\left[ \sup_{0\leq t\leq T}\left\langle\nu^K_t,(1-f_l)f_n\right\rangle \right] \leq \limsup_{K\rightarrow\infty}\E\left[ \sup_{0\leq t\leq T}\left\langle\nu_t^K,f_n\right\rangle \right].
                \end{displaymath}
                As $l\rightarrow\infty$ it follows from Beppo-Levi's theorem applied to the left term, that
                \begin{displaymath}
                    \E\left[ \sup_{0\leq t\leq T}\left\langle\nu_t,f_n\right\rangle \right] \leq \limsup_{K\rightarrow\infty}\E\left[ \sup_{0\leq t\leq T}\left\langle\nu_t^K,f_n\right\rangle \right]
                \end{displaymath}
                and then by the Lemma \ref{fn},
                \begin{displaymath}
                    \lim_{n\rightarrow\infty}\E\left[ \sup_{0\leq t\leq T}\left\langle\nu_t,f_n\right\rangle \right] \leq \lim_{n\rightarrow\infty}\limsup_{K\rightarrow\infty}\E\left[ \sup_{0\leq t\leq T}\left\langle\nu_t^K,f_n\right\rangle \right] = 0.
                \end{displaymath}
                One can therefore extract a sub-sequence $\left(\sup_{0\leq t\leq T}\langle\nu_t,f_{n_k}\rangle\right)_k$ that converges almost surely towards $0$, and then the limit law $\iQ$ only charges the set $\iC([0,T],\etr)$. We deduce that the convergence of $\{\iQ^K,K>1\}$ towards $\iQ$ for the weak topology on $\iM_F(\R_+)$ holds if the sequence of total masses $\left\{\left\langle\nu^K,1\right\rangle,K>1\right\}$ converges in law towards $\left\langle\nu,1\right\rangle$ in the Skorohod space $\D([0,T],\R)$ (see \cite{melroel}). So let us consider a Lipschitz continuous function $\Phi : \D([0,T],\R)\rightarrow\R$, and $n\in\N$.
                \begin{displaymath}\begin{array}{l}
                    \displaystyle \left|\E\left[ \Phi\left(\left\langle\nu^K,1\right\rangle\right) - \Phi\left(\left\langle\nu,1\right\rangle\right)\right] \right| \leq \E\left| \Phi\left(\left\langle\nu^K,1\right\rangle\right) - \Phi\left(\left\langle\nu^K,1-f_n\right\rangle\right)\right| \vspace{0.20cm}
                    \\
                    \displaystyle \hspace{5cm} +~\left|\E\left[ \Phi\left(\left\langle\nu^K,1-f_n\right\rangle\right) - \Phi\left(\left\langle\nu,1-f_n\right\rangle\right)\right] \right| \vspace{0.20cm} 
                    \\
                    \displaystyle \hspace{5cm} +\, \E\left| \Phi\left(\left\langle\nu,1-f_n\right\rangle\right) - \Phi\left(\left\langle\nu,1\right\rangle\right) \right| \vspace{0.20cm}
                    \\
                    \displaystyle \hspace{4.8cm} \leq C_{\Phi}\E\left\{ \sup_{0\leq t\leq T}\left\langle\nu^K_t,f_n\right\rangle + \sup_{0\leq t\leq T}\left\langle\nu_t,f_n\right\rangle \right\} \vspace{0.20cm}
                    \\
                    \displaystyle \hspace{5cm} +\, \left|\E\left[ \Phi\left(\left\langle\nu^K,1-f_n\right\rangle\right) - \Phi\left(\left\langle\nu,1-f_n\right\rangle\right)\right] \right| \vspace{0.20cm}
                \end{array}
                \end{displaymath}
                The function $1-f_n$ being continuous with a compact support and the sequence $\{\nu^K,K>1\}$ converging in law toward the limit process $\nu$ in $\D([0,T],\vag)$, we have
                \begin{displaymath}
                    \lim_{K\rightarrow\infty}\left|\E\left[ \Phi\left(\left\langle\nu^K,1-f_n\right\rangle\right) - \Phi\left(\left\langle\nu,1-f_n\right\rangle\right)\right] \right| = 0.
                \end{displaymath}
                It follows that for all $n\in\N$,
                \begin{displaymath}
                \begin{array}{l}
                    \displaystyle \limsup_{K\rightarrow\infty}\left|\E\left[ \Phi\left(\left\langle\nu^K,1\right\rangle\right) - \Phi\left(\left\langle\nu,1\right\rangle\right)\right] \right| 
                    \\
                    \displaystyle \hspace{3cm}\leq C_{\Phi}\left[\limsup_{K\rightarrow\infty}\E\left\{ \sup_{0\leq t\leq T}\left\langle\nu^K_t,f_n\right\rangle\right\} + \E\left\{\sup_{0\leq t\leq T}\left\langle\nu_t,f_n\right\rangle \right\}\right].
                \end{array}
                \end{displaymath}
                Thanks to Lemma \ref{fn}, that implies that as $n\rightarrow\infty$ in the right term,
                \begin{displaymath}
                    \limsup_{K\rightarrow\infty}\left|\E\left[ \Phi\left(\left\langle\nu^K,1\right\rangle\right) - \Phi\left(\left\langle\nu,1\right\rangle\right)\right] \right| = 0.
                \end{displaymath}
                The sub-sequence $\{(\langle\nu^K_t,1\rangle)_{t\in [0,T]},K>1 \}$ then converges in law towards the process $(\langle\nu_t,1\rangle)_{t\in [0,T]}$, and then the subsequence $(\iQ^K)_K$ converges towards $\iQ$ in the space of probability distributions $\iP(\D([0,T],\etr))$. That is enough to conclude that our initial sequence of laws is tight.
            \end{proof}
            
        \subsection{Step 3 : Identification of a limiting process}
            We are here interested in the characterization of a process $(\nu_t,R_t)_{t\in [0,T]}$ whose law is a limiting value of the sequence $\{\iQ^K,K>1\}$. Given the operator defined in (\ref{fragop}), we have the following result
            \begin{prop}
                Let us denote by $\iQ$ a limiting value of the sequence of distributions $\left\{\iQ^K,K>1\right\}$. Then a process $(\nu,R)$ of law $\iQ$ satisfies (\ref{determ}).
            \end{prop}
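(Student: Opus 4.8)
The plan is to identify the limit by showing that the two functionals encoding the weak formulation (\ref{determ}) vanish $\iQ$-almost surely. For $f\in\iC^2_c(\R_+,\R)$ and a path $(\mu,\rho)\in\D([0,T],\etr)$, I would introduce
$$\Psi^{1,f}_t(\mu,\rho) = \langle\mu_t,f\rangle - \langle\mu_0,f\rangle - \int_0^t\big\langle\mu_s, L_{\rho_s}f + b(\cdot,\rho_s)\iG[f] - df\big\rangle\,ds$$
and
$$\Psi^2_t(\mu,\rho) = \rho_t - \rho_0 - \int_0^t\big\{ r_{in} - \rho_s - \langle\mu_s,\chi(\cdot,\rho_s)\rangle \big\}\,ds.$$
It then suffices to prove that for each fixed $t$ and $f$ one has $\Psi^{1,f}_t(\nu,R)=0$ and $\Psi^2_t(\nu,R)=0$ almost surely under $\iQ$. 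The statement for every $t$ and every $f$ follows afterwards, since $\iQ$ charges only $\iC([0,T],\etr)$ by Proposition \ref{tight}, so that $t\mapsto\Psi^{i}_t$ is a.s. continuous, and since (\ref{determ}) extends to a countable convergence-determining family of test functions by density.

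Under $\iQ^K$ the pre-limit process is explicit. The Doob--Meyer decomposition (\ref{doobM}) of Lemma \ref{tailK}, together with the symmetry of $M$ which gives $\int_0^1 f((1-\alpha)x)M(d\alpha)=\int_0^1 f(\alpha x)M(d\alpha)$ and hence recovers $\iG[f]$ from the birth term in (\ref{trajK}), shows that $\Psi^{1,f}_t(\nu^K,R^K)=M^{K,f}_t$, while the resource line of (\ref{trajK}) yields $\Psi^2_t(\nu^K,R^K)=0$ identically (with $\rho_0=R_0$). From the quadratic-variation bound (\ref{diff}) and the moment control (\ref{momK}) one gets
$$\E\big[(M^{K,f}_t)^2\big] = \E\big[\langle M^{K,f}\rangle_t\big] \leq \frac{C_T\big(\|f\|_{\infty}^2+\|xf'\|_{\infty}^2\big)}{K}\,\E\Big[\sup_{0\le u\le T}\langle\nu^K_u,1\rangle\Big] \xrightarrow[K\to\infty]{} 0,$$
so $\Psi^{1,f}_t(\nu^K,R^K)\to 0$ in $L^1(\eP)$; in particular $\E^{\iQ^K}\big[|\Psi^{1,f}_t|\wedge 1\big]\to 0$, while $\E^{\iQ^K}\big[|\Psi^2_t|\wedge 1\big]=0$ for all $K$.

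To pass to the limit $\iQ^K\Rightarrow\iQ$, I would verify that the bounded maps $(\mu,\rho)\mapsto |\Psi^{i}_t(\mu,\rho)|\wedge 1$ are continuous at $\iQ$-almost every path. Since $f\in\iC^2_c$, the drift integrand $L_{\rho_s}f=\zeta(\cdot,\rho_s)f'+D(\cdot,\rho_s)f''$ is bounded and compactly supported and, by (A.1)--(A.2), continuous in $r$ uniformly in $x$; the terms $b(\cdot,\rho_s)\iG[f]$ and $df$ are bounded and continuous in $r$ uniformly in $x$ but not compactly supported, as $|\iG[f]|\le 3\|f\|_{\infty}$, which is precisely why the upgrade to the weak topology on $\iM_F(\R_+)$ secured in Proposition \ref{tight} is indispensable; likewise $\chi(\cdot,\rho_s)$ is bounded and continuous in $r$ uniformly in $x$. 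Hence each map $(\mu,\rho)\mapsto\langle\mu_s,g(\cdot,\rho_s)\rangle$ with such a $g$ is continuous for the weak $\times\,|\cdot|$ topology, and since $\iQ$ is supported on continuous paths the evaluation maps $\mu\mapsto\mu_s$ and the time integrals $\int_0^t$ are $\iQ$-a.s. continuous. By the Portmanteau theorem,
$$\E^{\iQ}\big[|\Psi^{1,f}_t|\wedge 1\big] = \lim_{K\to\infty}\E^{\iQ^K}\big[|\Psi^{1,f}_t|\wedge 1\big] = 0,\qquad \E^{\iQ}\big[|\Psi^2_t|\wedge 1\big]=0,$$
whence $\Psi^{1,f}_t(\nu,R)=\Psi^2_t(\nu,R)=0$ $\iQ$-a.s.

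The main obstacle will be the $\iQ$-a.s. continuity of $\Psi^{1,f}_t$ rather than the estimates, which are routine: once the fragmentation operator $\iG$ enters, the integrands are only bounded and not compactly supported, so convergence of the time integrals cannot be read off the vague topology and one must genuinely exploit both the weak topology and the continuity in time of the limit. A secondary point of care is the uniform integrability needed to interchange the limit with the time integration over the unbounded trait space, which is supplied by the moment bounds (\ref{momK}) and (\ref{contr}). Finally, having shown that $\Psi^{1,f}_t$ and $\Psi^2_t$ vanish a.s. for fixed $t$ and $f$, continuity of the paths in $t$ upgrades this to all $t\in[0,T]$ simultaneously, and a countable dense family of $f\in\iC^2_c(\R_+,\R)$ closes the argument for all admissible test functions, giving (\ref{determ}).
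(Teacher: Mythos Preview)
Your proposal is correct and follows essentially the same route as the paper: define the two functionals encoding (\ref{determ}), identify their values under $\iQ^K$ as $M^{K,f}_t$ and $0$ respectively, use the quadratic-variation bound (\ref{diff}) to kill the martingale part, and pass to the limit via continuity of the functionals. The only cosmetic difference is that the paper applies Fatou's lemma to the nonnegative continuous functionals $|\Phi_t|$ and $(\Phi^f_t)^2$, whereas you apply Portmanteau to the bounded truncations $|\Psi^i_t|\wedge 1$; your version is slightly more explicit about why the weak topology (and not merely the vague one) is needed for the $\iG[f]$ and $\chi$ terms, and about the $\iQ$-a.s.\ continuity of the evaluation maps, points which the paper leaves implicit.
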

            \begin{proof}
                Let $f\in\iC_c^2(\R_+,\R)$ be a test function and $t\in [0,T]$ a fixed time. We introduce the following real-valued functions defined for $(\mu,r)$ in the subset of $\iC([0,T],\etr)$ constitued of processes that satisfy $|r| \leq \bar{R}$
                \begin{displaymath}\begin{array}{l}
                    \displaystyle \Phi^f_t(\mu,r) = \left\langle\mu_t,f\right\rangle - \left\langle\mu_0,f\right\rangle - \int_0^t\int_0^{\infty}\left\{ \zeta(x,r_s)f'(x) + D(x,r_s)f''(x) \right\}\mu_s(dx)ds \vspace{0.20cm}
                    \\
                    \displaystyle \qquad\qquad -~\int_0^t\int_0^{\infty}\big\{ b(x,r_s)\iG[f](x) - d(x)f(x) \big\}\mu_s(dx)ds \vspace{0.20cm}
                    \\
                    \displaystyle \Phi_{t}(\mu,r) = r_t - r_0 - \int_0^t\left\{r_{in}-r_s - \int_0^{\infty}\chi(x,r_s)\mu_s(dx)\right\}ds.
                \end{array}
                \end{displaymath}
                We are aiming to show that 
                \begin{equation}
                    \E\left[\left|\Phi^f_t(\nu,R)\right|^2 + \left|\Phi_{t}(\nu,R)\right| \right] = 0.
                \end{equation}
                For this purpose, we first note that the function $\Phi_{t}$ defined as above is continuous and 
                \begin{displaymath}
                    \Phi_{t}\left(\nu^K,R^K\right) = 0.
                \end{displaymath}
                It then follows by the Fatou lemma that 
                \begin{displaymath}
                    \E\left|\Phi_{t}\left(\nu,R\right)\right| \leq \liminf_{K\rightarrow\infty}\E\left|\Phi_{t}\left(\nu^K,R^K\right)\right| = 0.
                \end{displaymath}
                Futhermore, the terms in the integrals in the expression of the function $\Phi^f_t$ are continuous and bounded, then this function is continuous. In addition, it follows from the decomposition (\ref{doobM}) that
                \begin{displaymath}
                    \Phi^f_t\left(\nu^K,R^K\right) = M^{K,f}_t
                \end{displaymath}
                and then thanks to the Fatou lemma and Doob's inequality,
                \begin{displaymath}
                \begin{array}{l}
                    \displaystyle \E\left[\left(\Phi^f_t\left(\nu,R\right)\right)^2 \right] \leq \liminf_{K\rightarrow\infty}\E\left[\left(\Phi^f_t\left(\nu^K,R^K\right)\right)^2 \right] \vspace{0.20cm}
                    \\
                    \displaystyle \hspace{2.7cm} \leq \liminf_{K\rightarrow\infty}\E\left[ \sup_{0\leq s\leq t}\left|M^{K,f}_s\right|^2 \right] \leq C \liminf_{K\rightarrow\infty}\E\left[\left\langle M^{K,f}\right\rangle_t\right].
                \end{array}
                \end{displaymath}
                We now deduce from (\ref{diff}) that
                \begin{equation}\label{comp}
                    \E\left[\left(\Phi^f_t\left(\nu,R\right)\right)^2 \right] \leq \liminf_{K\rightarrow\infty} \frac{C}{K}\E\left[\sup_{0\leq t\leq T}\left\langle\nu^K_t,1\right\rangle\right] = 0.
                \end{equation}
            \end{proof}
    
    \section{Some properties of a specific SDE}\label{appB}
        Let us introduce two continuous functions $h(x,s),q(x,s)\in\iC(\R_+\times[0,T],\R)$ that satisfy the following assumption.
        \begin{assume}\label{hypapp}
            \item[\quad\textsc{(A.1)}] Boundary condition : for all $s\leq [0, T]$,
            \[ q(0,s) = 0 \textrm{ and }h(0,s) \geq 0. \]
                
            \item[\quad\textsc{(A.2)}] Lipschitz condition : for all $x,y\geq 0$ and $s\in [0,T]$,
            \[ |h(x,s) - h(y,s)| + |q(x,s) - q(y,s)| \leq C|x-y|. \]
            
            \item[\quad\textsc{(A.3)}] The function $q(x,s)$ is non negative.
        \end{assume}
        \noindent Then we have the following results
        \begin{lemma}\label{appenBf}
            There is weak existence and uniqueness, and strong existence for the stochastic differential equation
            \begin{equation}\label{edsapp}
                dZ_u = h(Z_u,u)du + \sqrt{q(Z_u,u)}dW_u, \forall u\leq T.
            \end{equation}
            The solution $(Z^x_{s,u})_{u\in[s,T]}$ that starts at time $s<T$ at the position $x\geq 0$, has a zero local time at $0$, is almost surely non negative at any time and satifies the moment estimate
            \begin{equation}\label{momest}
                \E\left\{ \sup_{s\leq u\leq T}\left(Z^x_{s,u}\right)^p \right\} \leq C_{p,T}(1+x^p),\, \forall p>0.
            \end{equation}
            In addition, for all $x_1,x_2\geq 0$ and $s_1,s_2\leq T$, we have for all $u\leq T$
            \begin{equation}
                \E\left( \left| Z^{x_1}_{s_1,u} - Z^{x_2}_{s_2,u} \right| \right) \leq C_{T}\left\{|x_1-x_2| + (1+x_1+x_2)|s_1-s_2| + |1+x_1+x_2|^{1/2}|s_1-s_2|^{1/2}\right\},
            \end{equation}
            where $(Z^{x_1}_{s_1,u})_{u\in[s_1,T]}, (Z^{x_2}_{s_2,u})_{u\in[s_2,T]}$ are assumed to be built with the same Brownian motion, and satisfies $Z^{x_j}_{s_j,u} = x_j$ for all $u\leq s_j$ with $j=1,2$.
        \end{lemma}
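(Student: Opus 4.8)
The plan is to reduce everything to classical one-dimensional diffusion theory after a boundary-preserving extension of the coefficients, and to treat the degeneracy of $\sqrt{q}$ at $0$ via a local-time argument. First I would extend $h,q$ to all of $\R$ by setting $\bar h(x,s)=h(x\vee 0,s)$ and $\bar q(x,s)=q(x\vee 0,s)$; these remain Lipschitz in $x$ uniformly in $s$ by (A.2) in Assumption \ref{hypapp}, keep $\bar q\ge 0$, and satisfy $\bar q(0,s)=0$, $\bar h(0,s)=h(0,s)\ge 0$. Since $q(0,s)=0$ and $q$ is Lipschitz, $0\le \bar q(x,s)\le C\,x^+$, so $\sqrt{\bar q}$ has sublinear growth while $\bar h$ has linear growth, and weak existence for (\ref{edsapp}) follows from Skorokhod's existence theorem for continuous coefficients of linear growth. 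For pathwise uniqueness I would invoke the Yamada--Watanabe criterion: the elementary bound $|\sqrt a-\sqrt b|\le \sqrt{|a-b|}$ combined with (A.2) gives $|\sqrt{\bar q(x,s)}-\sqrt{\bar q(y,s)}|\le \sqrt{C}\,|x-y|^{1/2}$, and the modulus $u\mapsto \sqrt{C}\,u^{1/2}$ satisfies $\int_{0^+}du/(Cu)=+\infty$; as $\bar h$ is Lipschitz, pathwise uniqueness holds. Weak existence together with pathwise uniqueness then yields strong existence and uniqueness in law by the Yamada--Watanabe theorem (see \cite{revYor}, Ch. IX).

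Next I would establish simultaneously the non-negativity and the vanishing of the local time at $0$, which is the delicate point because of the degeneracy $\sqrt{q}(0,\cdot)=0$. Writing $Z=Z^x_{s,\cdot}$ with $x\ge 0$, the occupation-time bound $\int_s^t \frac{1}{Z_u}1_{\{0<Z_u\le 1\}}\,d\langle Z\rangle_u=\int_s^t \frac{\bar q(Z_u,u)}{Z_u}1_{\{0<Z_u\le 1\}}\,du\le C(t-s)<\infty$, which uses only $\bar q(y,u)\le C\,y$, lets me apply \cite{revYor}, Ch. IX, Lemma 3.3 to get $L^0(Z)=0$. The Tanaka formula for $x\mapsto x^-$ then gives $Z_t^-=-\int_s^t 1_{\{Z_u\le 0\}}\bar h(Z_u,u)\,du+\tfrac12 L^0_t(Z)=-\int_s^t 1_{\{Z_u\le 0\}}h(0,u)\,du$, since $\sqrt{\bar q}$ vanishes on $\{Z_u\le 0\}$ and $L^0(Z)=0$. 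Because $h(0,u)\ge 0$ the right-hand side is $\le 0$, whereas $Z_t^-\ge 0$; hence $Z_t^-=0$, i.e. $Z_t\ge 0$ a.s. In particular the extended and original coefficients coincide along the trajectory, so $Z$ solves (\ref{edsapp}) and has zero local time at $0$.

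For the moment estimate (\ref{momest}) I would argue classically. The bounds $|h(x,u)|\le C(1+x)$ (from (A.2) and continuity of $u\mapsto h(0,u)$ on $[0,T]$) and $q(x,u)\le C\,x\le C(1+x)$ allow, for $p\ge 1$, an application of It\^o's formula to $(1+Z^x_{s,u})^p$, followed by the Burkholder--Davis--Gundy inequality and Gronwall's lemma, yielding $\E\{\sup_{s\le u\le T}(Z^x_{s,u})^p\}\le C_{p,T}(1+x^p)$; the range $0<p<1$ then follows from the case $p=1$ by Jensen's inequality for the concave map $y\mapsto y^p$, using $\sup_u (Z^x_{s,u})^p=(\sup_u Z^x_{s,u})^p$.

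Finally, for the two-parameter estimate I would first record the Lipschitz dependence on the starting point at a fixed initial time, $\E|Z^a_{s,u}-Z^b_{s,u}|\le C_T|a-b|$, by exactly the Tanaka/Gronwall mechanism used in the proof of Lemma \ref{lemContr}: the difference of two solutions driven by the same $W$ has zero local time at $0$ (again via \cite{revYor}, Ch. IX, Lemma 3.3, using $|\sqrt{q(a',u)}-\sqrt{q(b',u)}|^2\le C|a'-b'|$), so its absolute value carries no martingale contribution in expectation, and the Lipschitz drift closes the estimate by Gronwall. Assuming without loss of generality $s_1\le s_2$, I would use the flow identity $Z^{x_1}_{s_1,u}=Z^{Z^{x_1}_{s_1,s_2}}_{s_2,u}$ for $u\ge s_2$ (from strong uniqueness) and condition on $\iF_{s_2}$ to obtain $\E|Z^{x_1}_{s_1,u}-Z^{x_2}_{s_2,u}|\le C_T\,\E|Z^{x_1}_{s_1,s_2}-x_2|\le C_T(|x_1-x_2|+\E|Z^{x_1}_{s_1,s_2}-x_1|)$; the displacement over $[s_1,s_2]$ is controlled, via BDG and the moment bound, by $\E|Z^{x_1}_{s_1,s_2}-x_1|\le C_T\{(1+x_1)|s_1-s_2|+(1+x_1)^{1/2}|s_1-s_2|^{1/2}\}$, and the cases $u\le s_1$ and $s_1\le u\le s_2$ reduce to the same displacement bound. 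Bounding $(1+x_1)$ by $(1+x_1+x_2)$ then gives the stated inequality. I expect the boundary analysis of the second paragraph to be the main obstacle, since it is there that the degeneracy $\sqrt{q}(0,\cdot)=0$ must be reconciled with non-negativity; the remaining steps are assemblies of standard moment and stability estimates.
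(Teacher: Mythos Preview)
Your proposal is correct and follows the same overall architecture as the paper: Yamada--Watanabe for existence and pathwise uniqueness, a Tanaka/local-time argument for non-negativity and $L^0=0$, It\^o--BDG--Gronwall for the moment bound, and a Tanaka-based Lipschitz estimate plus a short-time displacement bound for the two-parameter inequality.

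Two small but genuine simplifications relative to the paper are worth noting. First, you extend the coefficients to $\R$ by $h(x\vee 0,s),q(x\vee 0,s)$, whereas the paper uses $h(|x|,s),q(|x|,s)$; your choice makes the drift and diffusion \emph{constant} on $(-\infty,0]$, so the Tanaka identity for $Z^-$ collapses to $Z_t^-=-\int_s^t 1_{\{Z_u\le 0\}}h(0,u)\,du\le 0$ without any Gronwall step, while the paper must bound $-1_{\{Z\le 0\}}h(-Z,u)$ by $C\,Z^-$ and close via Gronwall. Second, for the dependence on $(x,s)$ you invoke the flow identity $Z^{x_1}_{s_1,\cdot}=Z^{Z^{x_1}_{s_1,s_2}}_{s_2,\cdot}$ and condition on $\iF_{s_2}$, reducing to the one-parameter Lipschitz bound; the paper instead applies Tanaka directly to $|Z^{x_1}_{s_1,v}-Z^{x_2}_{s_2,v}|$ on $[s_2,T]$ and Gronwalls from the value at $v=s_2$. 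Both routes lead to the same estimate; yours is slightly more structural, the paper's slightly more self-contained.
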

        
        \begin{lemma}\label{appenBs}
            Let $p(x)\in\iC^1_b(\R_+,\R)$ and $a(x,s)\in\iC_b(\R_+\times[0,T],\R)$ such that
            \[ |a(x,s)-a(y,s)|\leq \bar{a}|x-y|, \, \forall x,y\geq 0, \forall s\leq T, \]
            then for a given $t\leq T$, the function defined by
            \begin{equation}
                w_s(x) = \E\left[ p(Z^x_{s,t})e^{\int_s^ta(Z^x_{s,u},u)du} \right],\, \forall (s,x)\in [0,t]\times\R_+
            \end{equation}
            satisfy for all $s_1,s_2\leq t$ and $x_1,x_2\geq 0$ the following inequality
            \begin{equation}\label{contrg}
            \begin{array}{l}
                \displaystyle \left| w_{s_1}(x_1)-w_{s_2}(x_2) \right| \leq C_T \left(\|p\|_{\infty} + \|p'\|_{\infty}\right)\bigg\{ |x_1-x_2| + (1+x_1)^{1/2}|s_1-s_2|^{1/2} \vspace{0.20cm}
                \\
                \displaystyle \hspace{8.5cm} +\, (1+x_1)|s_1-s_2| \bigg\}.
            \end{array}
            \end{equation}
        \end{lemma}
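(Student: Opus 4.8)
The plan is to compare the two functionals through a coupling argument, taking $(Z^{x_1}_{s_1,u})_u$ and $(Z^{x_2}_{s_2,u})_u$ to be driven by the \emph{same} Brownian motion $W$, as allowed by Lemma \ref{appenBf}. Writing $A_j = \int_{s_j}^t a(Z^{x_j}_{s_j,u},u)\,du$ for $j=1,2$, I would first split
\[
w_{s_1}(x_1)-w_{s_2}(x_2) = \E\big[(p(Z^{x_1}_{s_1,t})-p(Z^{x_2}_{s_2,t}))\,e^{A_1}\big] + \E\big[p(Z^{x_2}_{s_2,t})(e^{A_1}-e^{A_2})\big].
\]
Since $a$ is bounded, $|A_j|\le \|a\|_\infty T$, hence $e^{A_j}\le e^{\|a\|_\infty T}$ and, by the mean value theorem, $|e^{A_1}-e^{A_2}|\le e^{\|a\|_\infty T}|A_1-A_2|$; these factors are absorbed into $C_T$. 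For the first expectation I use that $p\in\iC^1_b$ is Lipschitz with constant $\|p'\|_\infty$, so it is bounded by $e^{\|a\|_\infty T}\|p'\|_\infty\,\E|Z^{x_1}_{s_1,t}-Z^{x_2}_{s_2,t}|$, and the last expectation is controlled directly by the coupling estimate of Lemma \ref{appenBf} evaluated at $u=t$.

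The genuine work lies in the second expectation, bounded by $\|p\|_\infty e^{\|a\|_\infty T}\,\E|A_1-A_2|$: the difficulty is that $A_1$ and $A_2$ are integrals over the \emph{different} intervals $[s_1,t]$ and $[s_2,t]$. Assuming without loss of generality $s_1\le s_2$, I would write
\[
A_1-A_2 = \int_{s_1}^{s_2} a(Z^{x_1}_{s_1,u},u)\,du + \int_{s_2}^{t}\big[a(Z^{x_1}_{s_1,u},u)-a(Z^{x_2}_{s_2,u},u)\big]\,du.
\]
The first integral is bounded by $\|a\|_\infty|s_1-s_2|$; for the second I use the Lipschitz continuity of $a$ in its first variable (constant $\bar a$), giving $\bar a\int_{s_2}^{t}\E|Z^{x_1}_{s_1,u}-Z^{x_2}_{s_2,u}|\,du$, into which I again insert the coupling estimate of Lemma \ref{appenBf}. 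Because that estimate is uniform in $u\le T$, integrating over a time window of length $\le T$ preserves the same bound.

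Collecting the two contributions yields a bound of the form $C_T(\|p\|_\infty+\|p'\|_\infty)$ times $|x_1-x_2| + (1+x_1+x_2)|s_1-s_2| + (1+x_1+x_2)^{1/2}|s_1-s_2|^{1/2}$, and the last, cosmetic, step is to replace $x_1+x_2$ by $x_1$. For the linear term, $x_2\le x_1+|x_1-x_2|$ together with $|s_1-s_2|\le T$ absorbs the excess into the $|x_1-x_2|$ term; for the square-root term, the subadditivity $(1+x_1+x_2)^{1/2}\le C((1+x_1)^{1/2}+|x_1-x_2|^{1/2})$ followed by Young's inequality $|x_1-x_2|^{1/2}|s_1-s_2|^{1/2}\le \tfrac12|x_1-x_2|+\tfrac12(1+x_1)|s_1-s_2|$ redistributes everything into the three claimed terms, establishing (\ref{contrg}). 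The main obstacle is therefore the estimate of $\E|A_1-A_2|$ across the mismatched time windows; every other step is routine once the coupling bound of Lemma \ref{appenBf} is available.
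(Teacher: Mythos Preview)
Your proof is correct and follows the same overall strategy as the paper---couple the two processes via Lemma \ref{appenBf}, split into a $p$-difference term and an exponential-difference term, and feed in the coupling estimate---but differs in two technical choices worth noting.

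First, the paper passes through the intermediate point $w_{s_2}(x_1)$, bounding $|w_{s_1}(x_1)-w_{s_2}(x_1)|$ and $|w_{s_2}(x_1)-w_{s_2}(x_2)|$ separately; this automatically produces the asymmetric bound with $(1+x_1)$ rather than $(1+x_1+x_2)$, so no cosmetic step is needed at the end. Your direct decomposition is cleaner but lands on the symmetric form, which you then reduce via $x_2\le x_1+|x_1-x_2|$ and Young's inequality---a perfectly valid route.

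Second, for the exponential difference the paper writes $e^{A}-e^{B}=e^{B}(e^{A-B}-1)$ and runs a short Gronwall argument on the process $v\mapsto e^{\int_s^v[\,\cdot\,]du}-1$. Your mean-value bound $|e^{A_1}-e^{A_2}|\le e^{\|a\|_\infty T}|A_1-A_2|$ followed by the explicit splitting of $A_1-A_2$ over $[s_1,s_2]$ and $[s_2,t]$ is more elementary and reaches the same conclusion without Gronwall. Both approaches yield the same final estimate.
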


        \subsection{Proof of Lemma \ref{appenBf}}
            We split this proof into the several steps.
            
            \paragraph{Existence and uniqueness :} According to \cite{ikWa}, Theorem 1.1 it suffices to show that we have weak existence and pathwise uniqueness to conclude that there is weak existence and uniqueness, and strong existence. In addition it follows from the previous paragraph that this solution will be well defined at any time. The stochastic differential equation (\ref{edsapp}) is non homogeneous with continuous coefficients, then the coupled process $(Z_u,u)_{u\geq 0}$ satisfies an homogeneous stochastic differential equation with continuous coefficients. We then have weak existence for each initial condition $x\geq 0$ and initial time $s\in[0,T]$ thanks to \cite{ikWa}, Theorem 2.3, p173. Furthermore, for $x,y\geq 0$ and $s\leq u\leq T$ we have by hypothesis
            \[ \big|h(x,u) - h(y,u)\big| + \big|\sqrt{q(x,u)} - \sqrt{q(y,u)}\big|^2 \leq C |x-y|, \]
            thanks to the inequality $|\sqrt{a}-\sqrt{b}|\leq \sqrt{|a-b|}\, ,\, \forall a,b\geq 0$. We deduce from (\cite{yaWaII}) that there is pathwise uniqueness.
            
            \paragraph{Moment estimate and positiveness :}
            Notice that the solution that we constructed in the previous paragraph is well defined until it becomes negative or explodes. In order to get well defined terms for all times, we introduce the new equation
            \begin{equation}\label{neweds}
                d\tilde{Z}_u = h(|\tilde{Z}_u|,u)du + \sqrt{q(|\tilde{Z}_u|,u)}\,dW_u , \forall u\leq T
            \end{equation}
            for which there is weak existence and pathwise uniqueness by an argument similar to the one in the previous paragraph, but only until explosion time. It follows from the continuity of the function $h(x,s)$ and hypothesis (A.2) of Assumption \ref{hypapp} that
            \[ |h(x,s)| \leq C(1+x), \forall x\geq 0,\forall s\leq T. \]
            For given $x\geq 0$ and $s\leq t$, we consider the sequence of stopping times defined by
            \[ \lambda_N = \inf\{ v\geq s : |\tilde{Z}^x_{s,v}| > N \}\wedge T,\, N>1 \]
            that converges increasingly. Then for $p\geq 2$ and $v\in[s,T]$ we have
            \[ |\tilde{Z}^x_{s,v\wedge\lambda_N}|^p \leq C_p\left\{x^p + \int_s^{v\wedge\lambda_N}|h(|\tilde{Z}^x_{s,u}|,u)|^pdu + \left|\int_s^{v\wedge\lambda_N}\sqrt{q(|\tilde{Z}^x_{s,u}|,u)}\,dW_u\right|^p \right\} \]
            that implies that
            \begin{displaymath}
                \begin{array}{l}
                    \displaystyle \sup_{s\leq u\leq v\wedge\lambda_N}|\tilde{Z}^x_{s,u}|^p \leq C_p\bigg\{x^p + \int_s^{v\wedge\lambda_N}|h(|\tilde{Z}^x_{s,u}|,u)|du + \sup_{s\leq u\leq v}\bigg(\bigg|\int_s^{u\wedge\lambda_N}\sqrt{q(|\tilde{Z}^x_{s,u}|,u)}\,dW_u\bigg|^p\bigg) \bigg\} \vspace{0.30cm}
                    \\
                    \displaystyle \qquad\qquad\qquad~~~~ \leq C_p\bigg\{x^p + \int_s^{v\wedge\lambda_N}\left[1+|\tilde{Z}^x_{s,u}|^p\right]du + \sup_{s\leq u\leq v}\bigg(\bigg|\int_s^{u\wedge\lambda_N}\sqrt{q(|\tilde{Z}^x_{s,u}|,u)}\,dW_u\bigg|^p\bigg) \bigg\}.
                \end{array}
            \end{displaymath}
            It follows from the Doob inequality that
            \begin{displaymath}\begin{array}{l}
                \displaystyle \E\left\{ \sup_{s\leq u\leq v}\left(\left|\int_s^{u\wedge\lambda_N}\sqrt{q(|\tilde{Z}^x_{s,u}|,u)}\,dW_u\right|^p\right) \right\} \leq C_p\E\left\{ \left( \int_s^{v\wedge\lambda_N}q(|\tilde{Z}^x_{s,u}|,u)du \right)^{p/2}  \right\} \vspace{0.20cm}
                \\
                \displaystyle \hspace{7.3cm} \leq C_p\E\left\{ \left( \int_s^{v\wedge\lambda_N}|\tilde{Z}^x_{s,u}|du \right)^{p/2}  \right\} \vspace{0.20cm}
                \\
                \displaystyle \hspace{7.3cm} \leq C_p\E\left\{ \left( \int_s^{v}|\tilde{Z}^x_{s,u\wedge\lambda_N}|du \right)^{p/2}  \right\} \vspace{0.20cm}
                \\
                \displaystyle \hspace{7.3cm} \leq C_p\E\left\{ \int_s^{v}|\tilde{Z}^x_{s,u\wedge\lambda_N}|^{p/2}du  \right\} \vspace{0.20cm}
                \\
                \displaystyle \hspace{7.3cm} \leq C_p\E\left\{ \int_s^{v}\left[1+|\tilde{Z}^x_{s,u\wedge\lambda_N}|^{p}\right]du  \right\}
            \end{array}
            \end{displaymath}
            and then
            \begin{displaymath}
                \begin{array}{l}
                    \displaystyle \E\left[\sup_{s\leq u\leq v\wedge\lambda_N}|\tilde{Z}^x_{s,u}|^p\right] \leq C_p\left\{x^p + \E\left(\int_s^{v}\left[1+|\tilde{Z}^x_{s,u\wedge\lambda_N}|^p\right]du\right) \right\} \vspace{0.30cm}
                    \\
                    \displaystyle \qquad\qquad\qquad\qquad~~~~ \leq  C_p\left\{x^p + \int_s^{v}\left[1+\E\left\{\sup_{s\leq z\leq u\wedge\lambda_N}|\tilde{Z}^x_{s,z}|^p\right\}\right]du \right\}.
                \end{array}
            \end{displaymath}
            Hence, we obtain for all $v\in [s,T]$
            \[ 1+\E\left[\sup_{s\leq u\leq v\wedge\lambda_N}|\tilde{Z}^x_{s,u}|^p\right] \leq C_p\left\{1+x^p + \int_s^{v}\left[1+\E\left\{\sup_{s\leq z\leq u\wedge\lambda_N}|\tilde{Z}^x_{s,z\wedge\lambda_N}|^p\right\}\right]du \right\} , \]
            and thanks to the Gronwall lemma,
            \[ \E\left[\sup_{s\leq u\leq v\wedge\lambda_N}|\tilde{Z}^x_{s,u}|^p\right] \leq C_p\left(1+x^p\right)e^{C_p(v-s)}. \]
            If we assume that there exists $T_{0}<T$ such that $\eP\left\{ \lim_N\lambda_N< T_0 \right\} > 0$, then for all $v\in [T_0,T]$, we will have
            \[ N^p\,\eP\left\{ \lim_N\lambda_N< T_0 \right\} \leq \E\left[\sup_{s\leq u\leq v\wedge\lambda_N}|\tilde{Z}^x_{s,u}|^p\right] \leq C_p\left(1+x^p\right)e^{C_p(v-s)}. \]
            That is impossible because the left side term tends towards infinity as $N\rightarrow\infty$. It follows that $\lambda_N\xrightarrow[]{a.s} T$ and by the monotone convergence theorem,
            \[ \E\left[\sup_{s\leq u\leq v}|\tilde{Z}^x_{s,u}|^p\right] \leq C_p\left(1+x^p\right)e^{C_p(v-s)}\, ,\, \forall v\in [s,T]. \]
            It then suffices to take $v = T$. The case $0< p< 2$ is obviously solved thanks to the H\"older inequality
            \[ \E\left[\sup_{s\leq u\leq T}|\tilde{Z}^x_{s,u}|^p\right] \leq \left(\E\left[\sup_{s\leq u\leq T}|\tilde{Z}^x_{s,u}|^2\right]\right)^{p/2} \leq C_{p,T}(1+x^p). \]
            The process $(\tilde{Z}^x_{s,u})_{u\in[s,T]}$ is then well defined at any time and satisfies a moment estimate similar to (\ref{momest}). We will now show that this process is almost surely non negative, and then by uniqueness this will imply that it is a modification of the process $(Z^x_{s,u})_{u\in[s,T]}$. Indeed, the following decomposition holds
            \begin{equation}
                \tilde{Z}_{s,t}^x = (\tilde{Z}^x_{s,t})_+ - (\tilde{Z}^x_{s,t})_- , \forall t\in [s,T]
            \end{equation}
            and it follows from the Tanaka formula that
            \begin{equation}\label{tanaka}
                (\tilde{Z}^x_{s,t})_- = -\int_s^t1_{\{\tilde{Z}^x_{s,t}\leq 0\}}d\tilde{Z}^x_{s,t} + \frac{1}{2}L^0_t(\tilde{Z}^x_{s,\cdot}).
            \end{equation}
            For $\varepsilon>0$,
            \[ \int_s^t 1_{\{0< \tilde{Z}^x_{s,u}\leq \varepsilon\}}\frac{d\langle \tilde{Z}^x_{s,\cdot}\rangle_u}{\tilde{Z}^x_{s,u}} = \int_s^t1_{\{0< \tilde{Z}^x_{s,u}\leq \varepsilon\}}\frac{q(\tilde{Z}^x_{s,u},u)}{\tilde{Z}^x_{s,u}}du < \infty \textrm{ p.s,} \]
            since $(x,u)\mapsto q(x,u)/x$ is bounded. Then the local time $L^0_{t}(\tilde{Z}^x_{s,\cdot})$ is zero almost surely (see \cite{revYor}, Ch. IX, Lemma 3.3). Equation (\ref{tanaka}) becomes
            \[ (\tilde{Z}^x_{s,t})_- = -\int_s^t1_{\{\tilde{Z}^x_{s,u}\leq 0\}}h(-\tilde{Z}^x_{s,u},u)du - \int_s^t1_{\{\tilde{Z}^x_{s,u}\leq 0\}}\sqrt{q(-\tilde{Z}^x_{s,u},u)}\,dW_u  \]
            where the second term in the right side of the equality is a true martingale thanks to the moment estimate that we previously stated. It then follows from the hypothesis (A.1) and (A.2) in Assumption \ref{hypapp} that
            \begin{displaymath}
                \begin{array}{l}
                    \displaystyle \E\left[ (\tilde{Z}^x_{s,t})_- \right] = -\E\left(\int_s^t1_{\{\tilde{Z}^x_{s,u}\leq 0\}}h(-\tilde{Z}^x_{s,u},u)du\right) \vspace{0.20cm}
                    \\
                    \displaystyle \qquad\qquad~~\, \leq -\E\left(\int_s^t1_{\{\tilde{Z}^x_{s,u}\leq 0\}}\left[ h(-\tilde{Z}^x_{s,u},u) - h(0,u)\right]du\right) \vspace{0.20cm}
                    \\
                    \displaystyle \qquad\qquad~~\, \leq C\E\left(\int_s^t1_{\{\tilde{Z}^x_{s,u}\leq 0\}}\left| \tilde{Z}^x_{s,u}\right|du\right) \vspace{0.20cm}
                    \\
                    \displaystyle \qquad\qquad~~\, \leq C \int_s^t\E\left[( \tilde{Z}^x_{s,u})_-\right]du
                \end{array}
            \end{displaymath}
            and hence by the Gronwall lemma, $\E\left[ (\tilde{Z}^x_{s,t})_- \right] = 0$ for any $t\in [s,T]$, thus $(\tilde{Z}^x_{s,t})_- = 0$ p.s. We deduce that the stochastic process $(\tilde{Z}^x_{s,t})_{t\in[s,T]}$ is almost surely non negative at any time, and it is possible to remove the absolute values in (\ref{neweds}) to conclude by a uniqueness argument that
            \[ \forall t\in [s,T],\,  \tilde{Z}^x_{s,t} = Z^x_{s,t} \textrm{ p.s.} \]
            The stochastic process $(Z^x_{s,t})_{t\in[s,T]}$ is then almost surely non negative at any time, satisfies the moment estimate (\ref{momest}) and has a zero local time at $0$.
            
            \paragraph{Dependence on initial conditions :} Thanks to the strong existence, it is possible to construct two solutions $(Z^{x_1}_{s_1,v})_{v\in [s_1,T]}$ and $(Z^{x_2}_{s_2,v})_{v\in[s_2,T]}$ on the same filtered probability space and with the same Brownian motion, for all $x_1,x_2\geq 0$ and $0\leq s_1\leq s_2\leq T$. Those solutions are well extended on $[0,T]$ if we set 
            \[ Z^{x_j}_{s_j,v} = x_j, \forall v\in [0,s_j], j=1,2. \]
            Hence, it follows from the Tanaka formula that for all $s_2\leq v\leq T$,
            \begin{equation}\label{eqT}
                \left|Z^{x_1}_{s_1,v}-Z^{x_2}_{s_2,v}\right| = \left| Z^{x_1}_{s_1,s_2} - x_2 \right| + \int_{s_2}^v\textrm{sign}(Z^{x_1}_{s_1,u}-Z^{x_2}_{s_2,u})d(Z^{x_1}_{s_1,u}-Z^{x_2}_{s_2,u}) + l^0_v(Z^{x_1}_{s_1,\cdot}-Z^{x_2}_{s_2,\cdot})
            \end{equation}
            where $l^0_v(Z^{x_1}_{s_1,\cdot}-Z^{x_2}_{s_2,\cdot})$ is the local time of the process $Z^{x_1}_{s_1,\cdot}-Z^{x_2}_{s_2,\cdot}$ at $0$ up to time $v$. Furthermore, thanks to the inequality $|\sqrt{a}-\sqrt{b}|\leq \sqrt{|a-b|}\, ,\forall a,b\geq 0$ again, we have for $\varepsilon>0$, 
            \begin{displaymath}\begin{array}{l}
                \displaystyle \int_{s_2}^v 1_{\{0< Z^{x_1}_{s_1,u}-Z^{x_2}_{s_2,u}\leq \varepsilon\}}\frac{d\left\langle Z^{x_1}_{s_1,\cdot}-Z^{x_2}_{s_2,\cdot}\right\rangle_u}{Z^{x_1}_{s_1,u}-Z^{x_2}_{s_2,u}} = \int_{s_2}^v 1_{\{0< Z^{x_1}_{s_1,u}-Z^{x_2}_{s_2,u}\leq \varepsilon\}}\frac{\left|\sqrt{q(Z^{x_1}_{s_1,u},u)} - \sqrt{q(Z^{x_2}_{s_2,u},u)}\right|^2}{Z^{x_1}_{s_1,u}-Z^{x_2}_{s_2,u}}du \vspace{0.20cm}
                \\
                \displaystyle \hspace{6.5cm} \leq C\int_{s_2}^v 1_{\{0< Z^{x_1}_{s_1,u}-Z^{x_2}_{s_2,u}\leq \varepsilon\}}du < \infty .
            \end{array}
            \end{displaymath}
            We deduce thanks to \cite{revYor}{, Ch. IX, lemma 3.3} that the local time $l^0_{v}(Z^{x_1}_{s_1,\cdot}-Z^{x_2}_{s_2,\cdot})$ is zero almost surely for all $v\in [s_2,T]$, and it follows from (\ref{eqT}) that
            \begin{displaymath}\begin{array}{l}
                \displaystyle \E\left|Z^{x_1}_{s_1,v}-Z^{x_2}_{s_2,v}\right| \leq \E\left(\left|Z^{x_1}_{s_1,s_2}-x_2\right|\right) + \E\int_{s_2}^v\textrm{sign}(Z^{x_1}_{s_1,u}-Z^{x_2}_{s_2,u})\left[h(Z^{x_1}_{s_1,u},u)-h(Z^{x_2}_{s_2,u},u) \right]du \vspace{0.20cm} 
                \\
                \displaystyle \qquad\quad\quad\,~~~\quad\quad \leq \E\left(\left|Z^{x_1}_{s_1,s_2}-x_2\right|\right) + A\int_{s_2}^v\E\left(\left|Z^{x_1}_{s_1,v}-Z^{x_2}_{s_2,v}\right|\right)du
            \end{array}
            \end{displaymath}
            which implies by the Gronwall lemma that
            \begin{equation}\label{ics}
                \E\left|Z^{x_1}_{s_1,v}-Z^{x_2}_{s_2,v}\right| \leq \E\left(\left|Z^{x_1}_{s_1,s_2}-x_2\right|\right)e^{A(v-s_2)}\, , \, \forall v\in [s_2,T].
            \end{equation}
            We are now interested in what happens for $s_1\leq v\leq s_2$. Indeed,
            \begin{displaymath}\begin{array}{l}
                \displaystyle \E\left(\left|Z^{x_1}_{s_1,v}-Z^{x_2}_{s_2,v}\right|\right) = \E\left\{ \left| x_1 + \int_{s_1}^vh(Z^{x_1}_{s_1,u},u)du + \int_{s_1}^v\sqrt{q(Z^{x_1}_{s_1,u},u)}\, dW_u - x_2 \right| \right\} \vspace{0.20cm}
                \\
                \displaystyle \hspace{1cm} \leq \E\left\{ |x_1-x_2| + \int_{s_1}^v|h(Z^{x_1}_{s_1,u},u)|du + \left|\int_{s_1}^v\sqrt{q(Z^{x_1}_{s_1,u},u)}\, dW_u \right| \right\} \vspace{0.20cm}
                \\
                \displaystyle \hspace{1cm} \leq |x_1-x_2| + C\int_{s_1}^{s_2}\left[1+\E(Z^{x_1}_{s_1,u})\right]du + \E\left\{\sup_{s_1\leq v\leq s_2}\left|\int_{s_1}^v\sqrt{q(Z^{x_1}_{s_1,u},u)}\, dW_u \right| \right\}
            \end{array}
            \end{displaymath}
            and thanks to the Doob inequality,
            \begin{displaymath}\begin{array}{l}
                \displaystyle \E\left\{\sup_{s_1\leq v\leq s_2}\left|\int_{s_1}^v\sqrt{q(Z^{x_1}_{s_1,u},u)}\, dW_u \right| \right\} \leq \E\left\{\left|\int_{s_1}^{s_2}q(Z^{x_1}_{s_1,u},u)\, du \right|^{1/2} \right\} \vspace{0.20cm}
                \\
                \displaystyle \hspace{6cm} \leq C\sqrt{\E\left(\int_{s_1}^{s_2}Z^{x_1}_{s_1,u}du\right)} \vspace{0.20cm}
                \\
                \displaystyle \hspace{6cm} \leq C\sqrt{|s_1-s_2|\E\left(\sup_{s_1\leq u\leq T}Z^{x_1}_{s_1,u}\right)}.
            \end{array}
            \end{displaymath}
            Hence, we have for all $v\in [s_1,s_2]$,
            \[ \E\left(\left|Z^{x_1}_{s_1,v}-Z^{x_2}_{s_2,v}\right|\right) \leq |x_1-x_2| + C\int_{s_1}^{s_2}\left[1+\E(Z^{x_1}_{s_1,u})\right]du + C\sqrt{|s_1-s_2|\E\left(\sup_{s_1\leq u\leq T}Z^{x_1}_{s_1,u}\right)} \]
            which implies that
            \begin{equation}\label{ici}
                \E\left(\left|Z^{x_1}_{s_1,v}-Z^{x_2}_{s_2,v}\right|\right) \leq |x_1-x_2| + C_1(1+x_1)|s_1-s_2| + C_2|1+x_1|^{1/2}|s_1-s_2|^{1/2}, \forall v\in [s_1,s_2].
            \end{equation}
            We finally obtain without any order on $s_1,s_2\in [0,T], x_1,x_2\geq 0$ that for all $v\in[s_1\wedge s_2, T]$
            \[ \E\left(\left|Z^{x_1}_{s_1,v}-Z^{x_2}_{s_2,v}\right|\right) \leq C_{T}\left\{|x_1-x_2| + (1+x_1+x_2)|s_1-s_2| + |1+x_1+x_2|^{1/2}|s_1-s_2|^{1/2}\right\} \]
            that is easily extended for $v\in [0,T]$.
        
        \subsection{Proof of Lemma \ref{appenBs}}
            Let us first introduce the decomposition
            \begin{equation}\label{decreg}
                \left| w_{s_1}(x_1)-w_{s_2}(x_2) \right| \leq |w_{s_1}(x_1)-w_{s_2}(x_1)| + |w_{s_2}(x_1)-w_{s_2}(x_2)|
            \end{equation}
            thanks to the triangular inequality. Furthermore, by considering that $s_1\leq s_2$, we have
            \begin{displaymath}
                \begin{array}{l}
                    \displaystyle \left| w_{s_1}(x_1) - w_{s_2}(x_1) \right| \leq \E\left(\left| p(Z^{x_1}_{s_1,t})e^{\int_{s_1}^ta(Z^{x_1}_{s_1,u},u)du} - p(Z^{x_1}_{s_2,t})e^{\int_{s_2}^ta(Z^{x_1}_{s_2,u},u)du} \right|\right) \vspace{0.20cm}
                    \\
                    \displaystyle \qquad\qquad \leq \E\left(\left| p(Z^{x_1}_{s_1,t})-p(Z^{x_1}_{s_2,t})\right|e^{\int_{s_1}^ta(Z^{x_1}_{s_1,u},u)du}\right) \vspace{0.20cm}
                    \\
                    \displaystyle \hspace{4cm}+\, \E\left(|p(Z^{x_1}_{s_1,t})|\left|e^{\int_{s_1}^ta(Z^{x_1}_{s_1,u},u)du} - e^{\int_{s_2}^ta(Z^{x_1}_{s_2,u},u)du}\right|\right) \vspace{0.20cm}
                    \\
                    \displaystyle \qquad\qquad \leq e^{\|a\|_{\infty}(t-s_1)}\|p'\|_{\infty}\E\left(\left| Z^{x_1}_{s_1,t}-Z^{x_1}_{s_2,t}\right|\right) \vspace{0.20cm}
                    \\
                    \displaystyle \hspace{3cm}+\, \|p\|_{\infty}\left\{\E\left(\left|e^{\int_{s_1}^ta(Z^{x_1}_{s_1,u},u)du} - e^{\int_{s_1}^ta(Z^{x_1}_{s_2,u},u)du}\right|\right) \right. \vspace{0.20cm}
                    \\
                    \displaystyle \hspace{6cm} + \left.\E\left(\left|e^{\int_{s_1}^ta(Z^{x_1}_{s_2,u},u)du} - e^{\int_{s_2}^ta(Z^{x_1}_{s_2,u},u)du}\right|\right) \right\} \vspace{0.20cm}
                    \\
                    \displaystyle \qquad\qquad \leq e^{\|a\|_{\infty}(t-s_1)}\bigg[\|p'\|_{\infty}\E\left(\left| Z^{x_1}_{s_1,t}-Z^{x_1}_{s_2,t}\right|\right) \vspace{0.20cm}
                    \\
                    \displaystyle \hspace{1.5cm}+\, \|p\|_{\infty}\bigg\{\E\left(\left|e^{\int_{s_1}^t\left[a(Z^{x_1}_{s_1,u},u)-a(Z^{x_1}_{s_2,u},u) \right]du} - 1\right|\right) + \E\left(\left|e^{-\int_{s_1}^{s_2}a(Z^{x_1}_{s_2,u},u)du} - 1\right|\right) \bigg\}\bigg].
                \end{array}
            \end{displaymath}
            Let us set for all $s_1\leq v\leq t$
            \[ V^{x_1}_{s_1,s_2}(v) = e^{\int_{s_1}^v\left[a(Z^{x_1}_{s_1,u},u) - a(Z^{x_1}_{s_2,u},u)\right]du} - 1, \]
            then
            \[ V^{x_1}_{s_1,s_2}(v) = \int_{s_1}^v\left[a(Z^{x_1}_{s_1,u},u) - a(Z^{x_1}_{s_2,u},u)\right]V^{x}_{s_1,s_2}(u)du + \int_{s_1}^v\left[a(Z^{x_1}_{s_1,u},u) - a(Z^{x_1}_{s_2,u},u)\right]du \]
            that implies that,
            \[ \left|V^{x_1}_{s_1,s_2}(v)\right| \leq 2\|a\|_{\infty}\int_{s_1}^v\left|V^{x}_{s_1,s_2}(u)\right|du + \bar{a}\int_{s_1}^t\left| Z^{x_1}_{s_1,u} - Z^{x_1}_{s_2,u} \right|du \]
            and thanks to the Gronwall lemma,
            \[ \left|V^{x_1}_{s_1,s_2}(v)\right| \leq \bar{a} \, e^{2\|a\|_{\infty}(v-s_1)} \int_{s_1}^t\left| Z^{x_1}_{s_1,u} - Z^{x_1}_{s_2,u} \right|du \, ,\, \forall v\in [s_1,t]. \]
            We show by a similar argument that
            \[ \left|e^{-\int_{s_1}^{s_2}a(Y^x_{s_2,u},u)du} - 1\right| \leq e^{\|a\|_{\infty}|s_1-s_2|}\|a\|_{\infty}|s_1-s_2|. \]
            Hence, one can write for all $x_1\geq 0, s_1\leq s_2 \leq t$
            \begin{displaymath}\begin{array}{l}
                \displaystyle \left| w_{s_1}(x_1) - w_{s_2}(x_1) \right| \leq e^{\|a\|_{\infty}(t-s_1)}\bigg\{ \|p'\|_{\infty}\E\left(\left| Z^{x_1}_{s_1,t}-Z^{x_1}_{s_2,t}\right|\right) \vspace{0.20cm}
                \\
                \displaystyle \hspace{1cm} +\,\|p\|_{\infty}\left[ \bar{a}\, e^{2\|a\|_{\infty}(t-s_1)} \int_{s_1}^t\E\left(\left| Z^{x_1}_{s_1,u} - Z^{x_1}_{s_2,u} \right|\right)du + e^{\|a\|_{\infty}|s_1-s_2|}\|a\|_{\infty}|s_1-s_2|\right] \bigg\}
            \end{array}
            \end{displaymath}
            and thanks to Lemma \ref{appenBf}, we have for all $s_1,s_2\leq t$
            \begin{equation}\label{contrgs}
                \left| w_{s_1}(x_1) - w_{s_2}(x_1) \right| \leq C_T \left(\|p\|_{\infty} + \|p'\|_{\infty}\right)\left\{ (1+x_1)^{1/2}|s_1-s_2|^{1/2} + (1+x_1)|s_1-s_2| \right\}.
            \end{equation}
            In addition, for $x_1,x_2\geq 0$ and $s_2\leq t$,
            \begin{displaymath}
                \begin{array}{l}
                    \displaystyle \left| w_{s_2}(x_1) - w_{s_2}(x_2) \right| \leq \E\left(\left| p(Z^{x_1}_{s_2,t})e^{\int_{s_2}^ta(Z^{x_1}_{s_2,u},u)du} - p(Z^{x_2}_{s_2,t})e^{\int_s^ta(Z^{x_2}_{s_2,u},u)du} \right|\right) \vspace{0.20cm}
                    \\
                    \displaystyle \qquad\qquad\qquad\qquad~ \leq \E\left(\left| p(Z^{x_1}_{s_2,t})-p(Z^{x_2}_{s_2,t})\right|e^{\int_{s_2}^ta(Z^{x_1}_{s_2,u},u)du}\right) \vspace{0.20cm}
                    \\
                    \displaystyle \qquad\qquad\qquad\qquad\quad~ +~ \E\left(|p(Z^{x_2}_{s_2,t})|e^{\int_s^ta(Z^{x_2}_{s_2,u},u)du}\left|e^{\int_{s_2}^t\left[a(Z^{x_1}_{s_2,u},u) - a(Z^{x_2}_{s_2,u},u)\right]du} - 1\right|\right) \vspace{0.20cm}
                    \\
                    \displaystyle \qquad\qquad\qquad\qquad~ \leq e^{\|a\|_{\infty}(t-s_2)}\bigg\{\|p'\|_{\infty}\E\left(\left| Z^{x_1}_{s_2,t}-Z^{x_2}_{s_2,t}\right|\right) \vspace{0.20cm}
                    \\
                    \displaystyle \hspace{6cm} +\, \|p\|_{\infty}\E\left(\left|e^{\int_s^t\left[a(Z^{x_1}_{s_2,u},u) - a(Z^{x_2}_{s_2,u},u)\right]du} - 1\right|\right) \bigg\}.
                \end{array}
            \end{displaymath}
            Let us now set for all $s_2\leq v\leq t$
            \[ U^{x_1,x_2}_{s_2,v} = e^{\int_{s_2}^v\left[a(Y^{x_1}_{s_2,u},u) - a(Z^{x_2}_{s_2,u},u)\right]du} - 1, \]
            then
            \[ U^{x_1,x_2}_{s_2,v} = \int_{s_2}^v\left[ a(Z^{x_1}_{s_2,u},u) - a(Z^{x_2}_{s_2,u},u)  \right]U^{x_1,x_2}_{s_2,u}du + \int_{s_2}^v\left[ a(Z^{x_1}_{s_2,u},u) - a(Z^{x_2}_{s_2,u},u) \right]du \]
            that implies that
            \[ \left|U^{x_1,x_2}_{s_2,v}\right| \leq 2\|a\|_{\infty}\int_{s_2}^v\left|U^{x_1,x_2}_{s_2,u}\right|du + \bar{a}\int_{s_2}^t\left| Z^{x_1}_{s_2,u} - Z^{x_2}_{s_2,u} \right|du \]
            and thanks to the Gronwall lemma,
            \[ \left|U^{x_1,x_2}_{s_2,v}\right| \leq \bar{a}\, e^{2\|a\|_{\infty}(v-s_2)} \int_{s_2}^t\left| Z^{x_1}_{s_2,u} - Z^{x_2}_{s_2,u} \right|du \, ,\, \forall v\in [s_2,t]. \]
            Hence for all $x_1,x_2\geq 0, s_2\leq t$ we have
            \begin{displaymath}
            \begin{array}{l}
                \left| w_{s_2}(x_1) - w_{s_2}(x_2) \right| \leq e^{\|a\|_{\infty}(t-s_2)}\bigg\{\|p'\|_{\infty}\E\left(\left| Z^{x_1}_{s_2,t}-Z^{x_2}_{s_2,t}\right|\right) \vspace{0.10cm}
                \\
                \displaystyle \hspace{5cm} +\, \bar{a}\, e^{2\|a\|_{\infty}(t-s)} \|p\|_{\infty}\int_{s_2}^t\E\left(\left| Z^{x_1}_{s_2,u} - Z^{x_2}_{s_2,u} \right|\right)du \bigg\}
            \end{array}
            \end{displaymath}
            and thanks to Lemma \ref{appenBf},
            \begin{equation}\label{contrgx}
                \left| w_{s_2}(x_1) - w_{s_2}(x_2) \right| \leq C_T\left(\|p\|_{\infty} + \|p'\|_{\infty}\right)|x_1-x_2|, \forall x_1,x_2\geq 0, \forall s_2\leq t.
            \end{equation}
            Finally, the inequality (\ref{contrg}) follows from (\ref{decreg}), (\ref{contrgs}) and (\ref{contrgx}).

    \section{Difference operator and Besov spaces $B^s_{1,\infty}(\R)$}\label{appC}
            For $\alpha\in\, ]0,1[$, we denote by $\eC^{\alpha}_b(\R)$ the H\"older-Zygmund space that is the set of real valued functions on $\R$ such that
            \[ \|f\|_{\eC^{\alpha}_b(\R)} = \|f\|_{\infty} + \sup_{x,y\in\R,x\neq y}\frac{|f(x)-f(y)|}{|x-y|^{\alpha}} < \infty. \]
            We also introduce the difference operator defined for all $f:\R\rightarrow\R$, for given $m\geq 1$ integer and $h\in\R$ as
            \[ \forall x\in\R, \, \Delta^1_hf(x) = f(x+h) - f(x)\, ;\, \Delta^m_hf(x) = \Delta^1_h(\Delta^{m-1}_hf)(x) , \textrm{ pour }m>1. \]
            that is, by a recurrence argument,
            \begin{equation}\label{deltamh}
                \Delta^m_hf(x) = \sum_{j=0}^m(-1)^{m-j} \begin{pmatrix}
                                                            m \\
                                                            j
                                                        \end{pmatrix}f(x+jh)
            \end{equation}
            with the following properties.
            \begin{lemma}\label{propD}
                Let $m\geq 1$ be an integer, $\alpha\in\, ]0,1[$, $h\in\R$ and $f : \R\rightarrow\R$, then :
                \begin{enumerate}
                    \item If $f\in\eC^{\alpha}_b(\R)$, 
                    \begin{equation}
                        \|\Delta^m_hf\|_{\infty} \leq C_m |h|^{\alpha}\|f\|_{\eC^{\alpha}_b(\R)}.
                    \end{equation}
                    
                    \item If $f\in\eC^{\alpha}_b(\R)$, 
                    \begin{equation}
                        |\Delta^m_hf(x) - \Delta^m_hf(y)| \leq C_m \|f\|_{\eC^{\alpha}_b(\R)} |x-y|^{\alpha} , \forall x,y\in\R.
                    \end{equation}
                    
                    \item If $f\in\iC^m(\R,\R)$,
                    \begin{equation}
                        \|\Delta^m_hf\|_{\eL^1} \leq C_m |h|^m\|\partial_x^mf\|_{\eL^1}
                    \end{equation}
                    
                    \item If $f\in\iC_b(\R,\R)$ and $g : \R\rightarrow\R$ is an integrable function, then for all $a\in\R$,
                    \begin{equation}
                        \int_{\R}\Delta^m_hf(x + a) g(x)dx = \int_{\R}f(x + a)\Delta^m_{-h}g(x)dx
                    \end{equation}
                \end{enumerate}
            \end{lemma}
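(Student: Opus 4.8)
The plan is to derive all four properties directly from the explicit expansion (\ref{deltamh}), together with two elementary combinatorial facts: the alternating identity $\sum_{j=0}^m(-1)^{m-j}\binom{m}{j}=0$ (the binomial expansion of $(1-1)^m$) and $\sum_{j=0}^m\binom{m}{j}=2^m$. For property~1, since the coefficients sum to zero I would first rewrite
\[ \Delta^m_hf(x) = \sum_{j=0}^m(-1)^{m-j}\binom{m}{j}\big[f(x+jh)-f(x)\big], \]
and then bound each bracket by $\|f\|_{\eC^{\alpha}_b(\R)}|jh|^{\alpha}\leq m^{\alpha}\|f\|_{\eC^{\alpha}_b(\R)}|h|^{\alpha}$, which gives the claim with a constant depending only on $m$ and $\alpha$. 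Property~2 follows from the same expansion applied to the difference of two arguments,
\[ \Delta^m_hf(x)-\Delta^m_hf(y) = \sum_{j=0}^m(-1)^{m-j}\binom{m}{j}\big[f(x+jh)-f(y+jh)\big], \]
where, since $(x+jh)-(y+jh)=x-y$, each bracket is controlled by $\|f\|_{\eC^{\alpha}_b(\R)}|x-y|^{\alpha}$; summing the binomial coefficients produces the constant $C_m$.

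For property~3 I would first establish, by induction on $m$ with $\Delta^m_h=\Delta^1_h\circ\Delta^{m-1}_h$, the iterated integral representation
\[ \Delta^m_hf(x) = \int_0^h\!\!\cdots\!\int_0^h \partial_x^mf\Big(x+\textstyle\sum_{i=1}^m t_i\Big)\,dt_1\cdots dt_m, \]
whose base case $m=1$ is the fundamental theorem of calculus. Taking absolute values, substituting $t_i\mapsto\mathrm{sign}(h)\,t_i$ so that the domain becomes $[0,|h|]^m$, and applying the Fubini--Tonelli theorem together with the translation invariance of Lebesgue measure then yields
\[ \|\Delta^m_hf\|_{\eL^1} \leq \int_0^{|h|}\!\!\cdots\!\int_0^{|h|}\Big(\int_{\R}\big|\partial_x^mf\big(x+\textstyle\sum_i t_i\big)\big|\,dx\Big)dt_1\cdots dt_m = |h|^m\|\partial_x^mf\|_{\eL^1}, \]
so that $C_m=1$ is in fact admissible here.

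Finally, property~4 is a discrete integration by parts. Inserting (\ref{deltamh}) and performing the change of variables $y=x+jh$ in the $j$-th summand, I would obtain
\[ \int_{\R}\Delta^m_hf(x+a)g(x)\,dx = \sum_{j=0}^m(-1)^{m-j}\binom{m}{j}\int_{\R}f(y+a)\,g(y-jh)\,dy, \]
and recognising $\sum_{j=0}^m(-1)^{m-j}\binom{m}{j}g(y-jh)=\Delta^m_{-h}g(y)$, which is precisely (\ref{deltamh}) read with step $-h$, closes the argument; the integrability of $g$ and the boundedness of $f$ justify interchanging the finite sum with the integral. None of the four steps is genuinely difficult, and the only part requiring real care is the iterated integral identity in property~3: one must verify the induction step and check that the Fubini--Tonelli hypotheses hold, which they do since $\partial_x^mf\in\eL^1$.
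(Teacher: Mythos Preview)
Your proof is correct. The overall structure matches the paper's, but two of the four items are handled by slightly different (and in fact cleaner) arguments. For property~1, the paper proceeds by induction on $m$, using $\Delta^{m+1}_h f(x)=\Delta^m_h f(x+h)-\Delta^m_h f(x)$ and the trivial bound $\|\Delta^{m+1}_h f\|_\infty\leq 2\|\Delta^m_h f\|_\infty$, which yields the constant $2^{m-1}$; your use of the alternating identity to subtract $f(x)$ from each term is a direct one-line alternative. For property~3, the paper also argues by induction but packages the result as a single integral $\Delta^m_h f(x)=h^m\int_0^m H_m(t)\,\partial_x^m f(x+th)\,dt$ with a bounded kernel $H_m$ (essentially the $m$-fold convolution of $\mathbf{1}_{[0,1]}$), obtaining the constant $m\|H_m\|_\infty$; your iterated-integral formulation is the same identity before collapsing variables, and it makes the sharp constant $C_m=1$ transparent. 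Properties~2 and~4 are handled identically in both proofs.
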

            \begin{proof}
                For $0<\alpha<1$, $h\in\R$ and an integer $m\geq 1$,
                \begin{enumerate}
                    \item If $f\in\eC^{\alpha}_b(\R)$, we proceed by a recurrence argument on $m\geq 1$. Indeed,
                    \[ \|\Delta^1_hf\|_{\infty} = \sup_{x\in\R}\left| f(x+h)-f(x) \right| \leq |h|^{\alpha}\|f\|_{\eC^{\alpha}_b(\R)} \]
                    and if we assume that the property holds at range $m$, we get
                    \[ \|\Delta^{m+1}_hf\|_{\infty} = \sup_{x\in\R}\left| \Delta^m_hf(x+h)-\Delta^m_hf(x) \right| \leq 2C_m|h|^{\alpha}\|f\|_{\eC^{\alpha}_b(\R)} \]
                    that corresponds to the property with $C_{m+1} = 2C_m$. We deduce that it holds for each integer $m\geq 1$.
                    
                    \item If $f\in\eC^{\alpha}_b(\R)$, then thanks to (\ref{deltamh}) we get for all $x,y\in\R$,
                    \begin{displaymath}
                        |\Delta^m_hf(x) - \Delta^m_hf(y)| \leq \sum_{j=0}^m \begin{pmatrix}
                                                                                m \\
                                                                                j
                                                                            \end{pmatrix}|f(x+jh) - f(y+jh)| \leq 2^m\|f\|_{\eC^{\alpha}_b(\R)}|x-y|^{\alpha}.
                    \end{displaymath}
                    
                    \item We proceed by a recurrence argument to show firstly that 
                    \[ \mathbf{(P) : }~~\forall m\geq 1, ~\Delta^m_hf(x) = h^m\int_{0}^mH_m(t)\partial_x^mf(x+th)dt, \textrm{ if }f\in\iC^m(\R,\R) \]
                    where $H_m$ is bounded and does not depend on $f$. Indeed for $f\in\iC^1(\R,\R)$, then
                    \[ \Delta^{m+1}_hf(x) = f(x+h)-f(x) = \int_x^{x+h}f'(t)dt = h\int_0^1f'(x+th)dt  \]
                    and hence $H_1(t) = 1$.
                    We now assume that the property holds at range $m$, then if $f\in\iC^{m+1}(\R,\R)$,
                    \begin{displaymath}
                        \begin{array}{l}
                            \displaystyle \Delta^{m+1}_hf(x) = \Delta_h^{m}f(x+h) - \Delta^m_hf(x) \vspace{0.20cm}
                            \\
                            \displaystyle \qquad\qquad~~\, = h^m\int_0^mH_m(t)\left[ \partial_x^mf(x+(1+t)h) - \partial_x^mf(x+th) \right]dt \vspace{0.20cm}
                            \\
                            \displaystyle \qquad\qquad~~\, = h^m\int_0^mH_m(t)\int_{x+th}^{x+(1+t)h} \partial_x^{m+1}f(u)dudt \vspace{0.20cm}
                            \\
                            \displaystyle \qquad\qquad~~\, = h^{m+1}\int_0^mH_m(t)\int_{t}^{1+t} \partial_x^{m+1}f(x+uh)dudt \vspace{0.20cm}
                            \\
                            \displaystyle \qquad\qquad~~\, = h^{m+1}\int_0^mH_m(t)\int_{0}^{m+1} \partial_x^{m+1}f(x+uh)1_{\{u\in[t,1+t]\}}dudt \vspace{0.20cm}
                            \\
                            \displaystyle \qquad\qquad~~\, = h^{m+1}\int_0^{m+1}\underbrace{\left(\int_0^mH_m(t)1_{\{u\in[t,1+t]\}}dt\right)}_{=\, H_{m+1}(u)} \partial_x^{m+1}f(x+uh)du \vspace{0.20cm}
                        \end{array}
                    \end{displaymath}
                    where the function $H_{m+1}$ is bounded and does not depend on $f$. We deduce that the property $\mathbf{(P)}$ holds, and then for all $f\in\iC^m(\R,\R)$,
                    \[ \|\Delta^m_hf\|_{\eL^1} \leq |h|^m\|H_m\|_{\infty}\int_{\R}\int_0^m|\partial_x^mf(x+th)|dtdx = m\|H_m\|_{\infty}|h|^m\|\partial_x^mf\|_{\eL^1}. \]
                    
                    \item Let $f\in\iC_b(\R,\R)$, and $g:\R\rightarrow\R$ an integrable function and $a\in\R$, then by changing variables in the integral
                    \begin{displaymath}\begin{array}{l}
                        \displaystyle \int_{\R}\Delta^m_hf(x + a) g(x)dx = \sum_{j=0}^m (-1)^{m-j} \begin{pmatrix}
                                                                                m \\
                                                                                j
                                                                            \end{pmatrix}\int_{\R}f(x+a+jh)g(x)dx \vspace{0.20cm}
                        \\
                        \displaystyle \qquad\qquad\qquad\qquad\quad~~\, = \sum_{j=0}^m (-1)^{m-j} \begin{pmatrix}
                                                                                m \\
                                                                                j
                                                                            \end{pmatrix}\int_{\R}f(x+a)g(x - jh)dx \vspace{0.20cm}
                        \\
                        \displaystyle \qquad\qquad\qquad\qquad\quad~~\, = \int_{\R}f(x+a)\Delta^m_{-h}g(x)dx
                    \end{array}
                    \end{displaymath}
                \end{enumerate}
            \end{proof}
            \noindent The difference operator allows to define the Besov spaces $\iB^s_{1,\infty}(\R)$ as the set of function $f:\R\rightarrow\R$ that satisfy for $m>s$
            \[ \|f\|_{\iB^s_{1,\infty}} := \|f\|_{\eL^1(\R)} + \sup_{|h|\leq 1}|h|^{-s}\|\Delta^m_hf\|_{\eL^1(\R)} < \infty \]
            (see Triebel \cite{trieb83} Theorem 2.5.12, or \cite{trieb92} Theorem 2.6.1). The following result follows
            \begin{lemma}\label{appendBes}
                Let $(E,\iF_E,\mu)$ be a measured space where $\mu$ is a positive measure. Let $(x,y)\mapsto f(x,y)$ an integrable function on $\R\times E$, then if we denote by 
                \[ \Phi_f(x) = \int_Ef(x,y)\mu(dy), \forall x\geq 0 \]
                we have for all $s>0$,
                \begin{equation}
                    \left\| \Phi_f \right\|_{\iB^s_{1,\infty}(\R)} \leq \int_E\|f(\cdot,y)\|_{\iB^s_{1,\infty}(\R)}\mu(dy).
                \end{equation}
            \end{lemma}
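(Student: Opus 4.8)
The plan is to recognise the Besov norm $\|\cdot\|_{\iB^s_{1,\infty}(\R)}$ as a sum of two $\eL^1$-type quantities---the bare $\eL^1(\R)$ norm and the supremum over $|h|\le 1$ of the weighted norms $|h|^{-s}\|\Delta^m_h\cdot\|_{\eL^1(\R)}$ (with $m>s$ fixed as in the definition)---and to bound each piece by a Minkowski-type integral inequality, so that the whole statement is essentially Minkowski's integral inequality for this particular norm. The key structural point I would isolate first is that the difference operator commutes with integration against $\mu$: since by (\ref{deltamh}) the operator $\Delta^m_h$ is a finite linear combination of translations and $\Phi_f(x+jh)=\int_E f(x+jh,y)\,\mu(dy)$, linearity of the integral yields, for every $x\in\R$ and every $|h|\le 1$,
\[ \Delta^m_h\Phi_f(x) = \int_E \big(\Delta^m_h f(\cdot,y)\big)(x)\,\mu(dy). \]

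For the $\eL^1$ part I would use the integrability of $f$ on $\R\times E$ to invoke Tonelli's theorem together with the triangle inequality:
\[ \|\Phi_f\|_{\eL^1(\R)} = \int_\R\Big|\int_E f(x,y)\,\mu(dy)\Big|\,dx \le \int_E\int_\R|f(x,y)|\,dx\,\mu(dy) = \int_E\|f(\cdot,y)\|_{\eL^1(\R)}\,\mu(dy). \]
For the difference part I would fix $h$ with $|h|\le 1$, apply the commutation identity and the same argument, and then dominate by the supremum over the dummy shift:
\[ |h|^{-s}\|\Delta^m_h\Phi_f\|_{\eL^1(\R)} \le \int_E |h|^{-s}\|\Delta^m_h f(\cdot,y)\|_{\eL^1(\R)}\,\mu(dy) \le \int_E \sup_{|h'|\le 1}|h'|^{-s}\|\Delta^m_{h'} f(\cdot,y)\|_{\eL^1(\R)}\,\mu(dy). \]
Since the last right-hand side is independent of $h$, taking the supremum over $|h|\le 1$ on the left preserves the inequality. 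Adding the two bounds and recognising each integrand as a constituent of $\|f(\cdot,y)\|_{\iB^s_{1,\infty}(\R)}$ then produces exactly the claimed inequality.

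I do not expect a genuine obstacle here, as the argument is a routine Minkowski integral inequality; the only points requiring a word of care are technical. One must ensure that $\Phi_f$ is well defined and measurable and that $y\mapsto\|f(\cdot,y)\|_{\iB^s_{1,\infty}(\R)}$ is measurable so that the right-hand integral makes sense, and one must check that the integrability hypothesis on $f$ legitimately permits the application of Tonelli in both steps. The commutation of $\Delta^m_h$ with $\int_E\cdot\,\mu(dy)$ is immediate from its representation as a finite sum of translations, so no Fubini–Tonelli subtlety intervenes there. If the right-hand side is infinite the inequality is trivial, so the substantive content is entirely captured by the finite-integrand case treated above.
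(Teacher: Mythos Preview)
Your proposal is correct and follows essentially the same approach as the paper: split the Besov norm into its $\eL^1$ part and the $\sup_{|h|\le 1}|h|^{-s}\|\Delta^m_h\cdot\|_{\eL^1}$ part, apply Tonelli/Fubini (Minkowski's integral inequality) to each, use that $\Delta^m_h$ commutes with integration against $\mu$, then take the supremum in $h$ and sum. The paper's proof is slightly terser and does not spell out the commutation of $\Delta^m_h$ with the integral or the measurability remarks, but the argument is the same.
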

            \begin{proof}
                Let $s>0$, then it follows from Fubini theorem that
                \begin{displaymath}
                    \|\Phi\|_{\eL^1(\R)} = \int_{\R}\left| \int_E f(x,y) \mu(dy) \right|dx \leq \int_E\|f(\cdot,y)\|_{\eL^1(\R)}\mu(dy).
                \end{displaymath}
                Similarly, for any $m>s$ and $h\in[-1,1]$,
                \begin{displaymath}
                    \|\Delta^m_h\Phi\|_{\eL^1(\R)} = \int_{\R}\left| \int_E \big(\Delta^m_h f(\cdot,y)\big)(x) \mu(dy) \right|dx \leq \int_E\|\Delta^m_hf(\cdot,y)\|_{\eL^1(\R)}\mu(dy).
                \end{displaymath}
                The result directly follows by taking the supremum in $h$ and summing the above inequalities.
            \end{proof}
            Furthermore, the above definition of Besov spaces allows a sufficient condition for the existence of a density for random variables given by the following result
            \begin{lemma}[\cite{romito}, Lemma A.1]\label{condRom}
                Let $X$ be a real valued random variable. If there are an integer $m\geq 1$, a real number $\theta>0$, a real $\alpha>0$, with $\alpha<\theta<m$, and a constant $K>0$ such that for every $\phi\in\eC^{\alpha}_b(\R)$ and $h\in\R$ with $|h|\leq 1$,
                \[ \E\left[ \Delta^m_h\phi(X) \right] \leq K|h|^{\theta}\|\phi\|_{\eC^{\alpha}_b(\R)}, \]
                then $X$ has a density $f_X$ with respect to the Lebesgue measure on $\R$. Moreover $f_X\in\iB^{\theta-\alpha}_{1,\infty}(\R)$ and
                \[ \|f\|_{\iB^{\theta-\alpha}_{1,\infty}} \approxleq 1+K. \]
            \end{lemma}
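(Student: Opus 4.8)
Write $\mu$ for the law of $X$, a probability measure on $\R$ (so $\|\mu\|_{TV}=1$), and set $s:=\theta-\alpha\in(0,m)$, the integer $m$ being the one in the hypothesis. Since $m>\theta>s$, the same $m$ is admissible in the difference-based definition of $\iB^{s}_{1,\infty}$ recalled just before the statement. The plan is to show that, viewed as a tempered distribution, $\mu$ already belongs to $\iB^{s}_{1,\infty}(\R)$ with $\|\mu\|_{\iB^s_{1,\infty}}\approxleq 1+K$. Because $s>0$ one has the embedding $\iB^{s}_{1,\infty}(\R)\hookrightarrow\eL^1(\R)$: indeed $\mu=\sum_j\varphi_j*\mu$ and $\sum_j\|\varphi_j*\mu\|_{\eL^1}\le\|\mu\|_{\iB^s_{1,\infty}}\big(1+\sum_{j\ge0}2^{-js}\big)<\infty$. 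Hence membership in $\iB^s_{1,\infty}$ simultaneously yields that $\mu$ is absolutely continuous, that its density $f_X$ is integrable, and the announced Besov bound; in particular absolute continuity comes for free, with no separate argument.

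To estimate the Besov norm I would use the Littlewood--Paley characterisation, equivalent to the difference characterisation by the theorems of Triebel cited above. Fix a dyadic partition of unity $(\varphi_j)_{j\ge -1}$ with $\varphi_j=2^{j}\varphi(2^{j}\cdot)$ even and $\widehat{\varphi}$ supported in a fixed annulus avoiding $0$ and $2\pi\Z$, so that $\|\mu\|_{\iB^s_{1,\infty}}\sim \|\varphi_{-1}*\mu\|_{\eL^1}+\sup_{j\ge 0}2^{js}\|\varphi_j*\mu\|_{\eL^1}$. The low block is harmless, $\|\varphi_{-1}*\mu\|_{\eL^1}\le\|\varphi_{-1}\|_{\eL^1}\|\mu\|_{TV}\le C$. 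For $j\ge0$ I would bound the block by $\eL^1$--duality against smooth test functions, $\|\varphi_j*\mu\|_{\eL^1}=\sup\{\E[(\varphi_j*\psi)(X)]:\psi\in\iC_c^\infty,\ \|\psi\|_{\infty}\le 1\}$, using that $\varphi_j$ is even.

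The heart of the argument is to feed the hypothesis into $\E[(\varphi_j*\psi)(X)]$ at the matching scale $h=2^{-j}$, combining two observations. First, since $\widehat{\varphi_j}$ lives in the annulus $\sim 2^{j}$, only the frequency-$2^j$ part of $\psi$ contributes: $\varphi_j*\psi=\varphi_j*(\tilde{\Delta}_j\psi)$ for a fattened projector $\tilde\Delta_j$, and by Bernstein's inequality the localised $\phi:=\tilde\Delta_j\psi$ satisfies $\|\phi\|_\infty\le C\|\psi\|_\infty$ and $\|\phi\|_{\eC^\alpha_b(\R)}\le C\,2^{j\alpha}\|\psi\|_\infty$; this is exactly where the otherwise uncontrolled Hölder norm is converted into the factor $2^{j\alpha}$. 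Second, because $\widehat\varphi$ vanishes to infinite order at the origin, one may divide it by $(e^{i\xi}-1)^m$ to get a Schwartz kernel $\lambda$ with $\varphi_j*\phi=\lambda_j*\Delta^m_{2^{-j}}\phi$, where $\lambda_j=2^{j}\lambda(2^{j}\cdot)$ has $\|\lambda_j\|_{\eL^1}=\|\lambda\|_{\eL^1}$ independent of $j$; this manufactures the $m$-th difference demanded by the hypothesis. Using translation invariance of the $\eC^\alpha_b$-norm and applying the hypothesis to the translates of $\phi$ with step $2^{-j}\le 1$,
\begin{align*}
\big|\E[(\varphi_j*\psi)(X)]\big|
&=\Big|\int_\R\lambda_j(y)\,\E\big[\Delta^m_{2^{-j}}\phi(X-y)\big]\,dy\Big|\\
&\le \|\lambda\|_{\eL^1}\,K\,2^{-j\theta}\,\|\phi\|_{\eC^\alpha_b(\R)}
\le CK\,2^{-j\theta}2^{j\alpha}=CK\,2^{-js}.
\end{align*}
Taking the supremum over $\|\psi\|_\infty\le1$ gives $\|\varphi_j*\mu\|_{\eL^1}\le CK\,2^{-js}$, so $\sup_{j\ge0}2^{js}\|\varphi_j*\mu\|_{\eL^1}\le CK$ and $\|\mu\|_{\iB^s_{1,\infty}}\le C(1+K)$, which is the claim $\|f_X\|_{\iB^{\theta-\alpha}_{1,\infty}}\approxleq 1+K$.

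The main obstacle is precisely the displayed step: a naive $\eL^1$-duality bound fails, because the hypothesis only controls pairings weighted by $\|\cdot\|_{\eC^\alpha_b}$ while a generic bounded $\psi$ has arbitrarily large Hölder norm. The resolution is to exploit the frequency localisation of the block $\varphi_j*\mu$, which permits replacing $\psi$ by its frequency-$2^j$ truncation and invoking Bernstein to gain exactly $2^{j\alpha}$, and to factor the Littlewood--Paley kernel through $(e^{i2^{-j}\xi}-1)^m$ so that the hypothesis becomes applicable with step $2^{-j}$; balancing the gain $2^{j\alpha}$ against the decay $2^{-j\theta}$ produces the sharp exponent $s=\theta-\alpha$. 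The remaining ingredients (the division of $\widehat\varphi$ by $(e^{i\xi}-1)^m$ as a Schwartz multiplier, the Bernstein estimate, and the equivalence of the two Besov characterisations) are standard and can be quoted from Triebel.
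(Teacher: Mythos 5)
The paper does not prove this statement: it is imported verbatim from Romito \cite{romito}, Lemma A.1 (itself going back to Debussche--Romito \cite{debromito}), so there is no in-paper proof to compare yours against. Your reconstruction is correct and is in substance the argument of the cited sources: $\eL^1$--$\eL^{\infty}$ duality on each dyadic block, the fattened projector plus Bernstein to trade the uncontrolled H\"older norm of $\psi$ for a factor $2^{j\alpha}\|\psi\|_{\infty}$, the division of $\widehat{\varphi}$ by $(e^{i\xi}-1)^m$ (legitimate precisely because the annulus avoids $0$ and $2\pi\mathbb{Z}$, so the quotient is a Schwartz multiplier) to manufacture $\Delta^m_{2^{-j}}$ at the matching step $h=2^{-j}\leq 1$, and the embedding $\iB^{\theta-\alpha}_{1,\infty}(\R)\hookrightarrow\eL^1(\R)$ to extract the density together with the bound $\approxleq 1+K$. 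Only two micro-steps are left tacit, both harmless: the hypothesis is one-sided, so it must be applied to both $\pm\tau_y\phi$ (using the translation invariance of $\|\cdot\|_{\eC^{\alpha}_b(\R)}$, which you do invoke) in order to bound $\big|\E\big[\Delta^m_{2^{-j}}\phi(X-y)\big]\big|$ before integrating against $|\lambda_j|$; and the identification of the $\eL^1$-limit of $\sum_j \varphi_j*\mu$ with $\mu$ rests on convergence in the sense of tempered distributions, which is implicit in your concluding absolute-continuity step.
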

            \noindent The consequence that follows gives a sufficient condition that will is used in the paper
            \begin{lemma}\label{lemcond}
                Let $X$ be a real valued random variable. For a given non negative real valued function $\sigma(x)$ such that $\sigma(X)\in \eL^1$ with $\E\big[ \sigma(X) \big] \neq 0$, if there exists an integer $m\geq 1$, a real number $\theta>0$, a real $\alpha>0$ with $\alpha<\theta<m$, and a constant $K>0$ such that for every $\phi\in\eC^{\alpha}_b(\R)$ and $h\in\R$ with $|h|\leq 1$,
                \begin{equation}\label{hypmes}
                    \E\left[ \sigma(X)\Delta^m_h\phi(X) \right] \leq K |h|^{\theta}\|\phi\|_{\eC^{\alpha}_b(\R)},
                \end{equation}
                then $X$ admits a density $f_X$ on $\{ x : \sigma(x)\neq 0 \}$. In addition, if we denote by $f_X$ its density on this set, then the function $x\mapsto\sigma(x)f_X(x)$ is in the Besov space $\iB^{\theta-\alpha}_{1,\infty}(\R)$ and satisfies the bound
                \begin{equation}\label{boundbesov}
                    \|\sigma f_X\|_{\iB^{\theta-\alpha}_{1,\infty}(\R)} \approxleq K + \E\left[ \sigma(X) \right] 
                \end{equation}
            \end{lemma}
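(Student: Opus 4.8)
The plan is to reduce the statement to Romito's criterion, Lemma \ref{condRom}, by absorbing the weight $\sigma$ into the law of $X$. First I would introduce the finite positive measure $\mu$ on $\R$ defined by $\mu(A) = \E\big[\sigma(X)1_{\{X\in A\}}\big]$; since $\sigma\geq 0$ and $\E[\sigma(X)]\neq 0$, its total mass is $m_\sigma := \E[\sigma(X)] > 0$, so that $\mu/m_\sigma$ is a genuine probability measure. Let $Y$ denote a real random variable with law $\mu/m_\sigma$, equivalently characterized by $\E[\psi(Y)] = m_\sigma^{-1}\E[\sigma(X)\psi(X)]$ for every bounded measurable $\psi$.

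The second step translates the hypothesis (\ref{hypmes}) into the hypothesis of Lemma \ref{condRom} for the variable $Y$. Indeed, for every $\phi\in\eC^{\alpha}_b(\R)$ and $|h|\leq 1$,
\[ \E\big[\Delta^m_h\phi(Y)\big] = \frac{1}{m_\sigma}\E\big[\sigma(X)\Delta^m_h\phi(X)\big] \leq \frac{K}{m_\sigma}|h|^{\theta}\|\phi\|_{\eC^{\alpha}_b(\R)}. \]
With the same integers and exponents $m,\theta,\alpha$ (which still satisfy $\alpha<\theta<m$), Lemma \ref{condRom} applies with the constant $K' = K/m_\sigma$ and yields that $Y$ admits a density $f_Y$ with respect to the Lebesgue measure, with $f_Y\in\iB^{\theta-\alpha}_{1,\infty}(\R)$ and $\|f_Y\|_{\iB^{\theta-\alpha}_{1,\infty}}\approxleq 1 + K/m_\sigma$.

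It then remains to undo the weighting, which is the only delicate point since $\sigma$ cannot be inverted on the set $\{\sigma=0\}$. By construction $\mu = m_\sigma\cdot\mathrm{law}(Y)$ is absolutely continuous with density $g = m_\sigma f_Y$; note that $\mu(\{\sigma=0\}) = 0$, so $g$ vanishes a.e. there, consistently. On the set $\{\sigma\neq 0\}$ one has the identity of weighted measures $\sigma(x)\eP_X(dx) = \mu(dx) = g(x)\,dx$ with $\sigma>0$, and integrating $\sigma^{-1}$ against it gives, for every Borel $A\subseteq\{\sigma\neq 0\}$, $\eP_X(A)=\int_A\sigma^{-1}\,d(\sigma\eP_X)=\int_A\sigma^{-1}g\,dx$. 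Hence the restriction of $\eP_X$ to $\{\sigma\neq 0\}$ is absolutely continuous with density $f_X(x) = g(x)/\sigma(x) = m_\sigma f_Y(x)/\sigma(x)$, which establishes the existence of the density. Finally, since $\sigma(x)f_X(x) = m_\sigma f_Y(x)$, the function $\sigma f_X$ inherits the Besov regularity of $f_Y$ and
\[ \|\sigma f_X\|_{\iB^{\theta-\alpha}_{1,\infty}(\R)} = m_\sigma\|f_Y\|_{\iB^{\theta-\alpha}_{1,\infty}} \approxleq m_\sigma\Big(1 + \frac{K}{m_\sigma}\Big) = \E[\sigma(X)] + K, \]
which is exactly (\ref{boundbesov}). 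I expect the measure-theoretic inversion of $\sigma$ on its zero set to be the step requiring the most care, but no genuine analytic difficulty arises beyond the already-quoted Lemma \ref{condRom}.
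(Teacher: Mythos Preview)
Your proof is correct and follows essentially the same route as the paper: introduce the weighted measure $\sigma(x)\,\eP_X(dx)$, normalize it to a probability, apply Lemma~\ref{condRom} to the resulting random variable $Y$, and then divide by $\sigma$ on $\{\sigma\neq 0\}$ to recover the density of $X$ and the Besov bound. The only difference is notational, and your justification of the inversion step is slightly more explicit than the paper's.
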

            \begin{proof}
                Let us denote by $\mu(dx)$ the law of $X$ and set
                \[ \nu(dx) = \sigma(x)\mu(dx) \]
                that is a non negative finite measure by assumption. Then it follows from (\ref{hypmes}) that
                \[ \E\left[ \sigma(X)\Delta^m_h\phi(X) \right] = \|\nu\|_{\eL^1}\int_0^{\infty}\Delta^m_h\phi(x)\frac{\nu(dx)}{\|\nu\|_{\eL^1}} = \|\nu\|_{\eL^1} \E\left[ \Delta^m_h\phi(Y) \right] \]
                where $Y$ is a real valued random variable of law $\nu(dy)/\|\nu\|_{\eL^1}$. It follows from (\ref{hypmes}) that 
                \[ \E\left[ \Delta^m_h\phi(Y) \right] \leq \frac{K }{\|\nu\|_{\eL^1}}|h|^{\theta}\|\phi\|_{\eC_b^{\alpha}(\R)} \]
                that implies thanks to Lemma \ref{condRom} that the probability measure $\nu(dy)/\|\nu\|_{\eL^1}$ admits a density $g_X\in\iB^{\theta-\alpha}_{1,\infty}(\R)$ with respect to the Lebesgue measure on $\R$ that satisies the bound 
                \[ \|g_X\|_{\iB^{\theta-\alpha}_{1,\infty}(\R)} \approxleq 1+\frac{K}{\|\nu\|_{\eL^1}}.  \]
                Then $\mu(dx)$ admits a density $f_X$ on the set $\{x : \sigma(x)\neq 0\}$ that is defined by
                \[ f_X(x) = \frac{\|\nu\|_{\eL^1}}{\sigma(x)}g_X(x) \]
                and the bound (\ref{boundbesov}) follows.
            \end{proof}
            
    \section{$\eL^1$ estimates of derivatives of the Gaussian density}\label{appD}
            We are interested in the density of a centered Gaussian random variable with variance $\sigma^2$, that is
            \[ g_{\sigma}(x) = \frac{1}{\sigma\sqrt{2\pi}}\exp\left( -\frac{x^2}{2\sigma^2} \right) , \forall x\in\R. \]
            We have the following result
            \begin{lemma}\label{dernorm}
                For each integer $m\geq 1$, 
                \[ \|\partial^m_xg_{\sigma}\|_{\eL^1} \leq \frac{C_m}{\sigma^m} \]
                where the constant $C_m$ does not depend on $\sigma$.
            \end{lemma}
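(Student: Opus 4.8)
The plan is to reduce everything to the standard Gaussian density by a scaling argument. Write $g_1$ for the density of variance one, so that $g_\sigma(x) = \sigma^{-1} g_1(x/\sigma)$ for all $x\in\R$. Differentiating $m$ times in $x$ and using the chain rule, each derivative brings down a factor $1/\sigma$, so that
\[ \partial_x^m g_\sigma(x) = \frac{1}{\sigma^{m+1}}\, g_1^{(m)}\!\left(\frac{x}{\sigma}\right), \quad \forall x\in\R. \]

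Next I would compute the $\eL^1$ norm directly and change variables. Setting $u = x/\sigma$ (so $dx = \sigma\, du$), one gets
\[ \|\partial_x^m g_\sigma\|_{\eL^1} = \int_{\R}\frac{1}{\sigma^{m+1}}\left| g_1^{(m)}\!\left(\frac{x}{\sigma}\right)\right| dx = \frac{1}{\sigma^{m}}\int_{\R}\left| g_1^{(m)}(u)\right| du = \frac{1}{\sigma^m}\,\|g_1^{(m)}\|_{\eL^1}. \]
It then suffices to set $C_m := \|g_1^{(m)}\|_{\eL^1}$, which manifestly does not depend on $\sigma$, provided this quantity is finite.

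The only point requiring an argument is thus the finiteness of $\|g_1^{(m)}\|_{\eL^1}$, and this is where I would invoke the structure of the derivatives of the Gaussian. By a straightforward induction one checks that $g_1^{(m)}(x) = (-1)^m H_m(x)\, g_1(x)$, where $H_m$ is the (probabilist's) Hermite polynomial of degree $m$, defined by the same recursion. Since $H_m$ is a fixed polynomial and $g_1(x) = (2\pi)^{-1/2}e^{-x^2/2}$ decays faster than any inverse power of $x$, the product $x\mapsto |H_m(x)| g_1(x)$ is integrable on $\R$, so $C_m<\infty$. This is really the whole content, and there is no genuine obstacle: the scaling identity does all the work, and the finiteness of the constant follows from the elementary fact that a polynomial times a Gaussian is integrable.
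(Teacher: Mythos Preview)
Your proof is correct and in fact cleaner than the paper's. Both arguments ultimately rest on the same fact, namely that the $m$-th derivative of a Gaussian is a polynomial of degree $m$ times the Gaussian itself, which is therefore integrable. The difference lies in how the $\sigma$-dependence is handled.

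The paper works directly with $g_\sigma$, writing $\partial_x^m g_\sigma(x) = \sigma^{-m} P_m^\sigma(x) g_\sigma(x)$ and then tracking by induction how the coefficients $A_k^m(\sigma)$ of the polynomial $P_m^\sigma$ depend on $\sigma$, eventually showing $|A_k^m(\sigma)| \leq C_m \sigma^{-k}$; this exactly compensates the $\sigma^k$ coming from $\int_\R |x|^k g_\sigma(x)\,dx$. Your scaling argument bypasses all of this bookkeeping: the identity $g_\sigma(x) = \sigma^{-1} g_1(x/\sigma)$ together with the change of variables immediately isolates the factor $\sigma^{-m}$, and the remaining constant $\|g_1^{(m)}\|_{\eL^1}$ is manifestly independent of $\sigma$. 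The invocation of Hermite polynomials is the same structural observation the paper uses, just specialized to $\sigma = 1$. Your route is shorter and makes the scaling behaviour transparent; the paper's route is more hands-on but gives, in principle, an explicit recursion for the constants.
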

            \begin{proof}
                Because of its exponential factor, let us set
                \[ \partial_x^mg_{\sigma}(x) = \frac{P^{\sigma}_m(x)}{\sigma^m}\frac{1}{\sigma\sqrt{2\pi}}\exp\left( -\frac{x^2}{2\sigma^2} \right), \forall x\in\R, \forall m\geq 1. \]
                Then $P^{\sigma}_{1}(x) = - x/\sigma$ and
                \[ P^{\sigma}_{m+1} = \sigma\partial_xP_m^{\sigma} - \frac{x}{\sigma}P_m^{\sigma} \]
                that implies that each $P^{\sigma}_m$ is a polynomial, thus
                \[ P_m^{\sigma}(x) = \sum_{k=0}^mA^m_k(\sigma)x^k \]
                with
                \begin{displaymath}\begin{array}{l}
                    \displaystyle A^{m+1}_0(\sigma) = \sigma A^{m}_1(\sigma), \vspace{0.20cm} 
                    \\
                    \displaystyle A^{m+1}_k(\sigma) = \sigma(k+1)A^m_{k+1}(\sigma) - \frac{1}{\sigma}A^m_{k-1}(\sigma) ,\, k=1,...,m-1 \vspace{0.20cm}
                    \\
                    \displaystyle A^{m+1}_m(\sigma) = -\frac{1}{\sigma}A^{m}_{m-1}(\sigma) \vspace{0.20cm}
                    \\
                    \displaystyle A^{m+1}_{m+1}(\sigma) = -\frac{1}{\sigma}A^{m}_m(\sigma)
                \end{array}
                \end{displaymath}
                We easily verify by a recurrence argument on $m\geq 1$ that
                \[ \forall m\geq 1 , \, |A^m_k| \leq \frac{C_m}{\sigma^k} \textrm{ pour }k=1,...,m \]
                and then
                \begin{displaymath}
                    \|\partial^m_xg_{\sigma}\|_{\eL^1} = \frac{1}{\sigma^m}\int_{\R}\left|P_m^{\sigma}(x)\right|g_{\sigma}(x)dx \leq \frac{1}{\sigma^m}\sum_{k=0}^m\left| A^m_k(\sigma) \right|\int_{\R}\left|x\right|^kg_{\sigma}(x)dx \leq \frac{C_m}{\sigma^m}
                \end{displaymath}
            \end{proof}
        
\end{document}